\newcommand{\N}{\mathbb{N}}
\newcommand{\C}{\mathbb{C}}
\newcommand{\h}{\mathcal{H}}
\newcommand{\B}{\mathcal{B}}
\newcommand{\Manoa}{M\=anoa}
\newcommand{\Hawaii}{Hawai\kern.05em`\kern.05em\relax i}
\newcommand{\alg}{\text{alg}}
\DeclareMathOperator{\Aut}{Aut}
\DeclareMathOperator{\Ind}{Ind}
\DeclareMathOperator{\supp}{\mathrm{supp}}
\renewcommand{\top}{{\operatorname{top}}}
\newcommand*{\nb}{\nobreakdash}
\newcommand*{\Star}{\(^*\)\nobreakdash-}
\newcommand{\ev}{\operatorname{ev}}
\newcommand*{\K}{\mathcal K}
\newcommand*{\M}{\mathcal{M}} 
\newcommand*{\cont}{C}
\newcommand*{\contz}{\cont_0}
\newcommand*{\contb}{\cont_b}
\newcommand*{\contub}{\cont_{ub}}
\newcommand*{\id}{\textup{id}}
\newcommand*{\Ad}{\textup{Ad}}
\newcommand*{\U}{\mathcal U}
\newcommand*{\E}{\mathcal E}
\newcommand*{\defeq}{\mathrel{\vcentcolon=}}
\newcommand*{\cstar}{\texorpdfstring{$C^*$\nobreakdash-\hspace{0pt}}{*-}}
\newcommand*{\onto}{\twoheadrightarrow}
\renewcommand*{\max}{\mathrm{max}}
\newcommand{\eps}{\varepsilon}
\newcommand*{\Cor}{\mathfrak{Corr}}
\newcommand{\as}{\operatorname{as}}
\newcommand{\emu}{{\mathcal{E}(\mu)}}
\newcommand{\free}{\scalebox{1.5}{$\ast$}}
\theoremstyle{plain}
\newtheorem{theorem}{Theorem}[section]
\newtheorem{lemma}[theorem]{Lemma}
\newtheorem{corollary}[theorem]{Corollary}
\newtheorem{proposition}[theorem]{Proposition}
\newtheorem{definition-theorem}[theorem]{Definition / Theorem}
\newtheorem*{conjecture*}{Conjecture}
\newtheorem*{theorem*}{Theorem}
\theoremstyle{definition}
\newtheorem{definition}[theorem]{Definition}
\newtheorem{example}[theorem]{Example}
\newtheorem{question}[theorem]{Question}
\theoremstyle{remark}
\newtheorem{remark}[theorem]{Remark}
\newtheorem*{example*}{Example}  
\newtheorem*{remark*}{Remark}
\begin{document}
\title{The minimal exact crossed product}

\author{Alcides Buss}
\email{alcides@mtm.ufsc.br}
\address{Departamento de Matem\'atica\\
 Universidade Federal de Santa Catarina\\
 88.040-900 Florian\'opolis-SC\\
 Brazil}

\author{Siegfried Echterhoff}
\email{echters@math.uni-muenster.de}
\address{Mathematisches Institut\\
 Westf\"alische Wilhelms-Universit\"at M\"un\-ster\\
 Einsteinstr.\ 62\\
 48149 M\"unster\\
 Germany}

\author{Rufus Willett}
\email{rufus@math.hawaii.edu}
\address{Mathematics Department\\
 University of \Hawaii~at \Manoa\\
Keller 401A \\
2565 McCarthy Mall \\
 Honolulu\\
 HI 96822\\
USA}

\begin{abstract}
Given a locally compact group $G$, we study the smallest exact crossed-product functor 
$(A,G,\alpha)\mapsto A\rtimes_\E G$ on the category of $G$-$C^*$-dynamical systems.
As an outcome, we show that the smallest exact crossed-product functor is automatically 
Morita compatible, and hence coincides with the functor $\rtimes_\E$ as introduced 
by Baum, Guentner, and Willett in their reformulation of the Baum-Connes conjecture
(see \cite{Baum:2013kx}). We show that the corresponding group algebra $C_\E^*(G)$
always coincides with the reduced group algebra, thus showing that the 
new formulation of the Baum-Connes conjecture coincides with the classical one 
in the case of trivial coefficients.

\end{abstract}


\maketitle

 \noindent
{\bf Erratum:} \emph{After publication of this manuscript, some gaps have unfortunately been found affecting some parts of the paper. We therefore included an appendix with an erratum at the end of this paper explaining the mistakes and keeping the original published version unchanged. }
 
\medskip

\section{Introduction}

The construction of crossed products $(A,G,\alpha)\mapsto A\rtimes_\alpha G$ 
provides a major source of examples in $C^*$-algebra theory and plays an important r\^ole in many applications
of $C^*$-algebras in other fields of mathematics, such as group representation theory and topology.
Classically, there were two crossed products attached to a given action 
$\alpha:G\to \Aut(A)$ of a locally compact group $G$ on a $C^*$-algebra $A$:
the maximal crossed product $A\rtimes_{\alpha,\max}G$, which is universal 
for covariant representations $(\pi, u)$ of the underlying dynamical system $(A,G,\alpha)$,
and the reduced crossed product $A\rtimes_{\alpha,r}G$, which can be defined as the image 
of $A\rtimes_{\alpha,\max}G$ under the regular covariant representation of the system.
Both crossed products are completions of the algebraic crossed product
$A\rtimes_\alg G=C_c(G,A)$ by $C^*$-norms $\|\cdot\|_\max$ and $\|\cdot\|_r$, respectively
and the fact that the identity map on $C_c(G,A)$  induces a 
quotient from $A\rtimes_\max G$ onto $A\rtimes_rG$ means that 
$\|f\|_\max\geq \|f\|_r$ for all $ f\in C_c(G,A)$.

More recently, the study of exotic crossed-product functors $(A,G,\alpha)\to A\rtimes_{\alpha,\mu}G$
came into the focus of research.  Here $A\rtimes_{\alpha,\mu}G$ is a completion of $C_c(G,A)$
with respect to a $C^*$-norm $\|\cdot\|_\mu$ satisfying
$$\|f\|_\max\geq \|f\|_\mu\geq \|f\|_r$$
for all $f\in C_c(G,A)$. The identity on $C_c(G,A)$ then induces surjective $*$-homomorphisms
$$A\rtimes_\max G\onto A\rtimes_\mu G\onto A\rtimes_r G$$
for every $G$-algebra $A$. 

The interest in exotic crossed products is 
motivated in a good part by the  failure of the classical Baum-Connes
conjecture, which predicted that a certain assembly map 
\begin{equation}\label{eq-as} \as^r: K_*^{\top}(G;A)\to K_*(A\rtimes_rG)
\end{equation}
for the $K$-theory of the reduced crossed product should always be an isomorphism.
However, it was shown by Higson, Lafforgue, and Skandalis in \cite{Higson:2002la} that 
the conjecture fails for certain groups discovered by Gromov \cite{Gromov:2003gf} (see \cite{Osajda:2014ys} for a concrete 
construction).  This failure is due to the fact that these groups are not exact in the sense that 
the sequence of reduced crossed products
\begin{equation}\label{exact group intro} 
\xymatrix{ 0 \ar[r] & I\rtimes_r G\ar[r] &  A\rtimes_rG \ar[r] &  (A/I)\rtimes_rG \ar[r] &  0}
\end{equation}
for a $G$-invariant ideal $I$ of $A$ may fail to be exact in general, even in a way that is detectable by $K$-theory.  This led to the idea
that one should replace the reduced crossed product by the smallest exact
  crossed-product functor which is compatible with Morita equivalences (at least in some weak sense --
 see Section \ref{sec-mor} below for the precise definition).
 Indeed, it has been shown in \cite{Baum:2013kx} that for every locally compact group
 $G$ a smallest exact Morita compatible  functor $A\mapsto A\rtimes_\E G$ always exists; moreover, if we replace 
 the reduced crossed product by $A\rtimes_\E G$ in (\ref{eq-as}) getting a new assembly map
  \begin{equation}\label{eq-asE} \as^\E: K_*^{\top}(G;A)\to K_*(A\rtimes_\E G),
\end{equation}
then the known counterexamples for the Baum-Connes conjecture disappear, some counterexamples become confirming examples, and the known confirming examples remain as such.   Note that for exact groups, i.e.,  groups for which (\ref{exact group intro}) is always exact, we have $A\rtimes_\E G=A\rtimes_rG$, and hence the new conjecture
coincides with the old one for those groups.

 The smallest exact Morita compatible crossed-product functor $\rtimes_\E$ has been studied further
 in \cite{Buss:2014aa, Buss:2015ty}, where it has been shown (among other things), that 
 for second countable $G$, its restriction to the category of separable $G$\nb-$C^*$-algebras
  enjoys other good functorial properties: It is functorial 
 for $G$\nb-equivariant correspondences and it allows a descent in equivariant $KK$-theory.
 On the other hand, in many respects our understanding of the functor $\rtimes_\E$ has been
very limited. Important questions are (among others):
\begin{itemize}
\item[{\bf Q1.}] What is the group algebra $C_\E^*(G):=\C\rtimes_\E G$? Do we always have 
$C_\E^*(G)= C_r^*(G)$, the reduced group algebra?
\item[{\bf Q2.}] Is the smallest exact {\em Morita compatible} crossed-product functor $\rtimes_\E$
identical to the smallest exact crossed-product functor?
\item[{\bf Q3.}] Can we give  concrete descriptions or constructions of the functor $\rtimes_\E$? 
\item[{\bf Q4.}] How can we relate the smallest exact Morita compatible 
functor $\rtimes_{\E_G}$ for a group $G$ to the same functor $\rtimes_{\E_H}$ for
a closed subgroup $H$ of $G$?
\end{itemize}
Note that a positive answer to Question Q1 would imply that the new Baum-Connes conjecture coincides with the classical one
in the case of the trivial coefficient algebra $A=\C$, 
which would  fit with the fact that so far there are no known counterexamples for the classical 
Baum-Connes conjecture in this case.

In this paper we will give positive answers to Questions Q1 and Q2 and give at least partial answers to 
Questions Q3 and Q4.
Given any fixed crossed-product functor $\rtimes_\mu$  for a group $G$
(which will be the reduced crossed-product functor 
in our main applications), we start in 
Section \ref{sec-halfexact} with the construction of a crossed-product functor $\rtimes_\emu$ which
is the smallest {\em half-exact} crossed-product functor (i.e., the analogue of sequence (\ref{exact group})  is exact 
at the middle term) that dominates $\rtimes_\mu$ in the sense that
$\|f\|_\emu\geq \|f\|_\mu$ for all $f\in C_c(G,A)$. 
 We  show (see Proposition \ref{group alg}) that the corresponding group algebra 
 $C_\emu^*(G)=\C\rtimes_\emu G$ always coincides with the group algebra $C_\mu^*(G):=\C\rtimes_\mu G$.
 In particular, $C_{\E(r)}^*(G)=C_r^*(G)$.
 
In  Section \ref{ex sec}, building on ideas developed around 
 Archbold's and Batty's  property C (see \cite{Archbold:1980aa} and the treatment of this property in \cite[Chapter 9]{Brown:2008qy})
 and work of Matsumura \cite{Matsumura:2012aa}, we prove
 
 \begin{theorem}[see Theorem \ref{thm-exact}]
 Let $\rtimes_\mu$ be a crossed-product functor for the locally compact group $G$. Then the following 
 are equivalent:
 \begin{enumerate}
 \item $\rtimes_\mu$ is half-exact;
 \item for every $G$-algebra $A$ there is a canonical $*$-homomorphism 
 $$A^{**}_c\rtimes_\mu G\to (A\rtimes_\mu G)^{**}$$
where $A_c^{**}$ denotes the $G$-continuous part of the double dual $A^{**}$ of $A$;
\item $\rtimes_\mu$ is exact.
\end{enumerate}
\end{theorem}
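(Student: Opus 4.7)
The implication $(3) \Rightarrow (1)$ is immediate, since any exact functor is in particular half-exact. The remaining two implications follow the circle of ideas around Archbold--Batty's property (C) and its refinement in work of Matsumura.

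For $(1) \Rightarrow (2)$, take the universal $\mu$-representation $\rho = \pi \rtimes U : A \rtimes_\mu G \into \B(H)$, so that $(A \rtimes_\mu G)^{**} \cong \rho(A \rtimes_\mu G)''$. Extend $\pi : A \to \B(H)$ to a normal representation $\pi^{**} : A^{**} \to \B(H)$ and restrict it to $A^{**}_c$; norm-continuity of the $G$-action on $A^{**}_c$ guarantees that $(\pi^{**}|_{A^{**}_c}, U)$ is a covariant pair, which integrates to a $*$-homomorphism $\phi : A^{**}_c \rtimes_\max G \to (A \rtimes_\mu G)^{**}$. The substantive step is to show that $\phi$ descends to $A^{**}_c \rtimes_\mu G$. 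I would apply half-exactness of $\rtimes_\mu$ to the short exact sequence $0 \to A \to A^{**}_c \to A^{**}_c/A \to 0$, which locates the kernel of the quotient $A^{**}_c \rtimes_\max G \onto A^{**}_c \rtimes_\mu G$ in terms of the analogous kernels for $A$ and for $A^{**}_c/A$; combined with the fact that $\phi|_{A \rtimes_\max G}$ already factors through $A \rtimes_\mu G$, one deduces that $\phi$ annihilates this kernel and so factors through the desired map.

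For $(2) \Rightarrow (3)$, fix a short exact sequence $0 \to I \to A \to B \to 0$ of $G$-algebras; only injectivity of $I \rtimes_\mu G \to A \rtimes_\mu G$ is non-automatic. Observe that $I^{**}$ corresponds to a $G$-fixed central projection $p \in Z(A^{**})$, which therefore lies in $A^{**}_c$, and that $I^{**}_c = p A^{**}_c$ is a direct summand of $A^{**}_c$. By functoriality of $\rtimes_\mu$, $I^{**}_c \rtimes_\mu G$ is then a direct summand of $A^{**}_c \rtimes_\mu G$. Transporting this decomposition through the canonical map $\Phi_A : A^{**}_c \rtimes_\mu G \to (A \rtimes_\mu G)^{**}$ provided by (2), and using naturality in $I \into A \onto B$, identifies the image of $(I \rtimes_\mu G)^{**}$ inside $(A \rtimes_\mu G)^{**}$ with a weak-$*$ closed ideal on which the natural map $I \rtimes_\mu G \to A \rtimes_\mu G$ factors through the canonical embedding into its own bidual, forcing injectivity.

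The main obstacle is the descent step in $(1) \Rightarrow (2)$: constructing $\phi$ out of $A^{**}_c \rtimes_\max G$ is essentially formal, but controlling the $\mu$-norms on $C_c(G, A^{**}_c)$ is delicate because $A$ is only weak-$*$ dense, not norm-dense, in $A^{**}_c$. Half-exactness is precisely the input that allows one to control the behaviour of $\rtimes_\mu$ on the quotient $A^{**}_c/A$ well enough to carry out the descent.
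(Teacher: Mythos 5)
There are two genuine gaps, one of which is fatal to the proposed route for $(1)\Rightarrow(2)$. Your plan is to descend $\phi:A^{**}_c\rtimes_{\max}G\to (A\rtimes_\mu G)^{**}$ by applying half-exactness to ``$0\to A\to A^{**}_c\to A^{**}_c/A\to 0$''. But $A$ is not an ideal in $A^{**}_c$ (e.g.\ for $A=C[0,1]$, multiplying a nonzero $f\in A$ by the support projection of a point mass lands outside $A$), so this is not a short exact sequence of $C^*$-algebras and half-exactness cannot be applied to it; moreover, knowing that $\phi$ restricted to $C_c(G,A)$ is $\mu$-bounded gives no control over the $\mu$-norm on $C_c(G,A^{**}_c)$, since $A$ is only weak-$*$ dense in $A^{**}_c$ --- exactly the difficulty you flag but do not resolve. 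The paper's device for getting around this is the auxiliary $G$-algebra $A^I_c$ of norm-bounded strong$^*$-convergent nets in $A$: the coordinate evaluations $\ev_i:A^I_c\to A$ are honest equivariant $*$-homomorphisms, so functoriality of $\rtimes_\mu$ yields the estimate $\|f\|_{A^I_c\rtimes_\mu G}\geq\sup_i\|f_i\|_{A\rtimes_\mu G}$, and since strong$^*$ limits do not increase norms this bounds the image in $(A\rtimes_\mu G)''$ (Lemma \ref{prod cont}). One then applies half-exactness to the \emph{genuine} short exact sequence $0\to J\to A^I_c\to A''_c\to 0$, where $\rho:A^I_c\to A''_c$ is the strong$^*$-limit map, whose surjectivity onto the continuous part requires a separate smoothing argument (Lemma \ref{limit surj}). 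Some replacement for this construction is needed; the descent is not formal.

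In $(2)\Rightarrow(3)$ your claim that ``only injectivity of $I\rtimes_\mu G\to A\rtimes_\mu G$ is non-automatic'' is also wrong: exactness at the middle term is precisely the half-exactness condition and is not automatic for a general crossed-product functor. Your direct-summand argument ($I^{**}_c=pA^{**}_c$ splits off, so $I^{**}_c\rtimes_\mu G\to A^{**}_c\rtimes_\mu G$ is split injective, and the composite $I\rtimes_\mu G\to I^{**}_c\rtimes_\mu G\to (I\rtimes_\mu G)^{**}$ is the canonical embedding) is essentially the paper's proof of injectivity and is fine. But for middle exactness you additionally need: if $x\in A\rtimes_\mu G$ dies in $B\rtimes_\mu G$, then its image in $A^{**}_c\rtimes_\mu G$ lies in the summand $I^{**}_c\rtimes_\mu G$, hence $x\in (I\rtimes_\mu G)^{**}\cap (A\rtimes_\mu G)$ inside $(A\rtimes_\mu G)^{**}$, and this intersection equals $I\rtimes_\mu G$ by the Archbold--Batty-type lemma \cite[Lemma 9.2.6]{Brown:2008qy}. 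That last intersection step is the crux of the property-$C'$ argument and is absent from your plan.
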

\noindent This theorem not only gives a new characterization of exact groups (when applied to the reduced crossed-product functor),
it also shows that the functor $\rtimes_{\E(r)}$ of Section \ref{sec-halfexact} is indeed the smallest exact exotic crossed-product functor.
In Section \ref{sec-mor} we then show that $\rtimes_{\E(r)}$  is Morita compatible,
which gives a positive answer to Question Q2 (i.e., $\rtimes_{\E(r)}=\rtimes_\E$), and, since  $C_\E^*(G)=C_{\E(r)}^*(G)=C_r^*(G)$,  also to 
Question Q1.

In Section \ref{sec-lift} we study certain equivariant lifting  properties for $G$-algebras, which give 
rise to more concrete descriptions of the smallest exact functor $\rtimes_\E$. 
In particular, we say that a $G$-algebra $C$ has the {\em weak equivariant lifting property} (WELP), if
 for any diagram of equivariant maps of the form  
$$
\xymatrix{ & B\ar[d]^-\pi\\ C \ar[r]^-\sigma \ar@{-->}[ur]^-{\widetilde{\sigma}} & B/J}
$$
with $\sigma$ a $*$-homomorphism and $\pi$ a quotient map,  the dashed arrow can always be filled in with a $G$-equivariant
ccp map $\tilde\sigma$. If $G$ is discrete, it is not difficult to see that for any $G$-algebra $A$, there always exists a short exact 
sequence $0\to I\to C\stackrel{\pi}{\to} A\to 0$ with $C$ satisfying (WELP), and 
it then follows that 
$$A\rtimes_{\E} G=\frac{C\rtimes_r G}{I\rtimes_rG}.$$
If $G$ is discrete and $A$ is unital, then $C$ can always be chosen to be the maximal group algebra $C^*(F_{N\times G})$ of a free group 
generated by a set $N\times G$ with $G$-action induced by the translation action on the second factor 
 on the generating set $N\times G$.
It follows in particular that for all $G$-algebras satisfying (WELP) we have $C\rtimes_{\E} G=C\rtimes_rG$. 
These include all equivariantly projective $G$-algebras $C$ as defined in \cite{Phillips:2015aa}.

Finally, in Section \ref{sec-BC} we show that for an {\em open} subgroup $H$ of $G$, the minimal exact functor $\rtimes_{\E_H}$
is always the {\em restriction} (see Section \ref{sec-BC} for the definition)
of the minimal exact functor $\rtimes_{\E_G}$ for $G$. This implies in particular, that validity of 
the reformulated Baum-Connes conjecture for a locally compact group $G$ passes to all open subgroups 
of $G$. Note that an analogue of this result for  {\em closed normal} subgroups of $G$ has been obtained in \cite{Buss:2015ty}.
\medskip

\noindent
{\bf Conventions:} The phrase \emph{$G$-algebra} will always mean a $C^*$-algebra equipped with a continuous action by $*$-automorphisms of a locally compact group $G$.  If $A$ is a $C^*$-algebra equipped with a not-necessarily-continuous action
$$
\alpha:G\to \text{Aut}(A)
$$
of a locally compact group $G$, then the \emph{continuous part} of $A$ is defined to be
$$
A_c:=\{a\in A\mid g\mapsto \alpha_g(a) \text{ is norm continuous}\},
$$
and is a $G$-algebra with the restricted action.  For a locally compact group $G$ and a $G$-algebra $A$, $C_c(G,A)$ denotes the collection of all compactly supported continuous functions from $G$ to $A$, equipped with the usual $*$-algebra operations.  The reduced and maximal completions of $C_c(G,A)$ will be denoted $A\rtimes_r G$ and $A\rtimes_{\max} G$.   A general crossed product functor as in \cite[Definition 3.2]{Buss:2015ty} will be denoted $\rtimes_\mu$, and the associated completion of $C_c(G,A)$ by $A\rtimes_\mu G$.  For most of the paper, there should be no ambiguity about which particular action a given $C^*$-algebra is equipped with; as such, we will not label crossed products with the name of the action unless it seems necessary to avoid confusion.  Finally, if $g\in G$, then we denote the canonical associated unitary in the multiplier algebra $\M(A\rtimes_\mu G)$ of a crossed product by $\delta_g$.

\medskip

\noindent
{\bf Acknowledgments:} This work was prompted by a suggestion of Narutaka Ozawa to consider norms induced by quotient maps from $G$-algebras of the form $C^*(F_{N\times G})$ as discussed above.  We are very grateful to Professor Ozawa for this initial suggestion.  The authors would also like to thank Erik Guentner and Hannes Thiel for useful conversations on some of the issues in this paper.  

Most of the work on this paper was carried out during a visit of the first and third authors to the second author at the Westf\"{a}lische Wilhelms-Universit\"{a}t M\"{u}nster; these authors would like to thank the second author, and that institution, for their hospitality.  

The authors were supported by Deutsche Forschungsgemeinschaft (SFB 878, Groups, Geometry \& Actions), by CNPq/CAPES -- Brazil, and by the US NSF (DMS 1401126 and DMS 1564281).

\section{Half exact crossed products}\label{sec-halfexact}

Throughout this section, $G$ denotes a locally compact group, and $\rtimes_\mu$ a fixed crossed-product functor for $G$ as in \cite[Definition 3.2]{Buss:2015ty}.  At this point, we do not assume that $\rtimes_\mu$ has any other properties beyond those in this basic definition; however we will need to specialize to crossed product functors satisfying more stringent conditions later, and will make clear when this comes up.
Our goal is to define a new crossed-product functor $\rtimes_{\emu}$, which should be thought of as the `best exact approximation to $\rtimes_\mu$', and indeed in Section \ref{ex sec} we will prove that it is the smallest exact crossed product that is larger than $\rtimes_\mu$.   We will spend most of this section proving some basic properties of $\rtimes_\emu$.

The reader unfamiliar with exotic crossed products is encouraged to just assume that $\rtimes_\mu=\rtimes_r$ throughout, which is certainly the most important special case.  Nonetheless, it seemed worthwhile to work in general as this causes no extra difficulties, and as it clarifies the `formal' nature of the constructions and proofs; by `formal' we mean that they rely on general $C^*$-algebra theory and functorial properties of $\rtimes_\mu$, and have nothing to do with the specific construction underlying the definition of $\rtimes_r$.

We first need some ancillary notation.

\begin{definition}\label{complete in}
Let $B\subseteq A$ be an equivariant inclusion of $C^*$-algebras.  Then $B\rtimes_{\mu,A}G$ denotes the completion of $C_c(G,B)$ for the norm it inherits as a subalgebra of $A\rtimes_\mu G$.
\end{definition}

Here is the main definition of this section.

\begin{definition}\label{quot norm}
Let $A$ be a $G$-algebra, and let 
$$
\xymatrix{ 0 \ar[r] & I \ar[r] & C \ar[r]^-\pi & A \ar[r] & 0 }
$$
be an equivariant short exact sequence.  Then we get a short exact sequence 
$$
\xymatrix{ 0 \ar[r] & I\rtimes_{\mu,C} G \ar[r] & C\rtimes_\mu G \ar[r] & \frac{C\rtimes_\mu G}{I\rtimes_{\mu,C} G} \ar[r] & 0 }.
$$
This gives rise to a (dense) $*$-algebra inclusion 
$$
C_c(G,A) \hookrightarrow \frac{C\rtimes_\mu G}{I\rtimes_{\mu,C} G}.
$$
The \emph{$\pi$-norm}\footnote{Of course, the $\pi$-norm also depends on the fixed crossed product $\rtimes_\mu$, but the `parent' crossed product should always be clear from context, so we do not include it in the notation.} on $C_c(G,A)$, denoted $\|\cdot\|_{\pi}$, is the norm induced by the above inclusion, and the corresponding completion is denoted $A\rtimes_{\pi} G$.  

The \emph{$\emu$-norm} on $C_c(G,A)$ is defined by 
$$
\|a\|_{\emu}:=\sup\{\|a\|_\pi\mid \pi:C\to A \text{ an equivariant surjection}\}
$$
and the corresponding completion of $C_c(G,A)$ is denoted $A\rtimes_\emu G$.
\end{definition}

Note that the supremum defining the $\emu$-norm is over a non-empty set: indeed, it contains the $\pi$-norm associated to the identity function $\pi:A\to A$.   Moreover, the supremum is finite as if we have an equivariant short exact sequence
$$
\xymatrix{0\ar[r] & I \ar[r] & C\ar[r]^-\pi & A  \ar[r] & 0 }
$$
then exactness of the maximal crossed product gives rise to a quotient map  
$$
A\rtimes_{\max}G =\frac{C\rtimes_{\max}G}{I\rtimes_{\max} G} \to \frac{C\rtimes_{\mu}G}{I\rtimes_{\mu,C} G},
$$
whence $\|a\|_\pi\leq \|a\|_{\max}$ for all $a\in C_c(G,A)$.  On the other hand, functoriality
of  $\rtimes_\mu$ gives rise to a quotient map
$$
\frac{C\rtimes_\mu G}{I\rtimes_{\mu,C}G}\to A\rtimes_\mu G
$$
so that $\|a\|_\mu\leq \|a\|_\pi$ for all $a\in C_c(G,A)$.  
Moreover, every crossed-product norm is assumed to satisfy $\|a\|_r\leq \|a\|_\mu$, where $\|\cdot\|_r$ denotes the reduced norm.
Hence we get the inequalities
\begin{equation}\label{norm sandwich}
\|a\|_r\leq \|a\|_\mu\leq  \|a\|_\emu\leq \|a\|_{\max} \quad \text{for all} \quad a\in C_c(G,A).
\end{equation}

\begin{proposition}\label{quot func}
Let $\phi:A\to B$ be an equivariant $*$-homomorphism.  Then the integrated form 
$$
\phi\rtimes G:C_c(G,A)\to C_c(G,B);\quad a\mapsto \phi\circ a
$$
extends (uniquely) to a $*$-homomorphism $A\rtimes_{\emu} G \to B\rtimes_{\emu} G$.  In particular, $\rtimes_{\emu}$ is a crossed-product functor in the sense of  \cite[Definition 3.2]{Buss:2015ty}.
\end{proposition}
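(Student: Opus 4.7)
The plan is to show that for any equivariant $*$-homomorphism $\phi: A\to B$ and any $a\in C_c(G,A)$ we have the inequality $\|\phi\circ a\|_\emu \leq \|a\|_\emu$. Granting this, the integrated form extends by continuity to a $*$-homomorphism $A\rtimes_\emu G\to B\rtimes_\emu G$; functoriality in $\phi$ is then immediate from the corresponding property on $C_c$-algebras, and the requirements for $\rtimes_\emu$ to be a crossed-product functor in the sense of \cite[Definition 3.2]{Buss:2015ty} follow from the norm sandwich \eqref{norm sandwich} together with functoriality.

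To prove the norm inequality, fix an equivariant surjection $\sigma: D\to B$; it suffices to show $\|\phi\circ a\|_\sigma \leq \|a\|_\emu$ and then take the supremum over $\sigma$. The essential step is to manufacture an equivariant surjection onto $A$ that pairs with $\sigma$ through $\phi$, and I would do this via the equivariant pullback
$$
C := D\oplus_B A = \{(d,a')\in D\oplus A : \sigma(d) = \phi(a')\},
$$
with coordinate projections $p_D: C\to D$ and $p_A: C\to A$. Then $p_A$ is an equivariant surjection (since $\sigma$ is), the square formed by $p_D$, $p_A$, $\phi$, and $\sigma$ commutes, and $\ker(p_A) = \ker(\sigma)\oplus 0$ is carried by $p_D$ isomorphically onto $\ker(\sigma)\subseteq D$.

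Functoriality of $\rtimes_\mu$ now gives a $*$-homomorphism $p_D\rtimes G: C\rtimes_\mu G \to D\rtimes_\mu G$. The identification of $\ker(p_A)$ with $\ker(\sigma)$ just noted shows that on $C_c(G,\ker(p_A))$ the map $p_D\rtimes G$ coincides with the canonical inclusion $C_c(G,\ker\sigma)\to D\rtimes_\mu G$, so by continuity it carries $\ker(p_A)\rtimes_{\mu,C}G$ into $\ker(\sigma)\rtimes_{\mu,D}G$. Hence $p_D\rtimes G$ descends to a $*$-homomorphism
$$
A\rtimes_{p_A} G \longrightarrow B\rtimes_\sigma G.
$$
The step I expect to be the main obstacle is verifying that, on the dense subalgebra $C_c(G,A)\subset A\rtimes_{p_A}G$, this descended map acts precisely as $\phi\rtimes G$. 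For this, lift $a\in C_c(G,A)$ to some $\tilde a\in C_c(G,C)$ via a (Bartle--Graves) continuous section of $p_A$ and observe that the image in $C_c(G,B)\subset B\rtimes_\sigma G$ is $\sigma\circ p_D\circ\tilde a = \phi\circ p_A\circ\tilde a = \phi\circ a$ by commutativity of the pullback square. Since $*$-homomorphisms are contractive, this yields $\|\phi\circ a\|_\sigma\leq \|a\|_{p_A}\leq \|a\|_\emu$, which is the desired inequality.
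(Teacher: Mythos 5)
Your proposal is correct and is essentially identical to the paper's own proof: the paper also forms the equivariant pullback over $\phi$ and the given surjection, identifies the kernel of the projection onto $A$ with the kernel of the surjection onto $B$, descends the crossed product of the other projection to a map of $\pi$-completions agreeing with $\phi\rtimes G$ on $C_c(G,A)$, and concludes by contractivity and taking suprema. The only cosmetic difference is that you invoke a Bartle--Graves section to see that the descended map restricts to $\phi\rtimes G$ on $C_c(G,A)$, whereas the paper treats the dense inclusion $C_c(G,A)\hookrightarrow (C\rtimes_\mu G)/(I\rtimes_{\mu,C}G)$ as already established in Definition \ref{quot norm}.
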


\begin{proof}
Let
$$
\xymatrix{ 0 \ar[r] & I \ar[r] & C \ar[r]^-\pi & B\ar[r] & 0 }
$$
be an arbitrary equivariant short exact sequence.  Let $P=\{(c,a)\in C\oplus A\mid \pi(c)=\phi(a)\}$ be the pullback over the diagram
$$
\xymatrix{ P \ar[r]^{\pi_A} \ar[d]^{\pi_C} & A \ar[d]^-\phi \\ C\ar[r]^-\pi & B}
$$ 
(the maps labeled $\pi_A$ and $\pi_B$ are the restrictions of the coordinate projections from $C\oplus A$ to $P$).  The direct sum $G$-action on $C\oplus A$ restricts to an action on $P$ and we  thus obtain a commutative diagram of equivariant short exact sequences
$$
\xymatrix{ 0 \ar[r] & J \ar[d]  \ar[r] & P \ar[d]^-{\pi_C} \ar[r]^-{\pi_A} & A\ar[r] \ar[d]^-\phi & 0  \\ 0 \ar[r] & I \ar[r] & C \ar[r]^-\pi & B\ar[r] & 0 },
$$
where the map $J\to I$, which is an isomorphism, exists by commutativity.  Taking crossed products thus induces a map
$$
\frac{P\rtimes_\mu G}{J\rtimes_{\mu,P} G} \to \frac{C\rtimes_\mu G}{I\rtimes_{\mu,C} G}
$$
that agrees with $\phi\rtimes G$ on $C_c(G,A)$.  This implies that for all $a\in C_c(G,A)$ we have
$$
\|\phi\rtimes G(a)\|_{\pi}\leq \|a\|_{\pi_A}\leq \|a\|_\emu.
$$
Taking the supremum over all such $\pi$ now gives that $\|\phi\rtimes G(a)\|_{\emu}\leq \|a\|_{\emu}$ and thus that $\phi\rtimes G$ extends as claimed.  That $\rtimes_\emu$ is a crossed-product functor follows from functoriality of algebraic descent 
$$
(\phi:A\to B) \quad \mapsto \quad (\phi\rtimes G :C_c(G,A)\to C_c(G,B))
$$
and the inequality in Equation \eqref{norm sandwich}.
\end{proof}

We now show that the supremum defining the $\emu$-norm  is always attained.

\begin{proposition}\label{norm real}
Let $G$ be a locally compact group, let $\rtimes_\mu$ be a crossed product for $G$, and let $A$ be a $G$-algebra.  Then there exists an equivariant quotient map $\pi:C\to A$ such that $\|a\|_\emu=\|a\|_\pi$ for all $a\in C_c(G,A)$. If $A$ is unital, then $C$ can be chosen to be unital as well.
\end{proposition}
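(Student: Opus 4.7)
My plan is to construct a single equivariant surjection $\pi:C\to A$ that is \emph{weakly universal}: for every equivariant surjection $\pi':C'\to A$ there is an equivariant $*$-homomorphism $f:C\to C'$ with $\pi'\circ f=\pi$. Once such $\pi$ is in hand, applying functoriality of $\rtimes_\mu$ to $f$ (as in the proof of Proposition~\ref{quot func}, but without the pullback construction since we already have a direct factoring) yields a quotient map $A\rtimes_\pi G\to A\rtimes_{\pi'}G$ that is the identity on $C_c(G,A)$, so $\|a\|_{\pi'}\le\|a\|_\pi$ for all $a\in C_c(G,A)$; taking the supremum over $\pi'$ gives $\|a\|_\pi=\|a\|_\emu$.

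The construction of $\pi$ is via an infinite fibered product. First, the supremum defining $\|\cdot\|_\emu$ is achieved over a set-indexed family: for each $a\in C_c(G,A)$ pick a sequence of surjections $\pi_n:C_n\to A$ with $\|a\|_{\pi_n}\to\|a\|_\emu$, and collect all of these into a set-indexed family $\{\pi_s:C_s\to A\}_{s\in S}$. Form the $\ell^\infty$-product $\prod_{s\in S}C_s$ with the diagonal $G$-action (possibly non-continuous), and let
\[
P:=\Bigl\{(c_s)\in\prod_{s\in S}C_s \, : \, \pi_s(c_s)\text{ is the same element of }A\text{ for every }s\Bigr\}.
\]
Set $C:=P_c$, the $G$-continuous part of $P$, which is a $G$-algebra. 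The coordinate projections $f_s:C\to C_s$ are equivariant $*$-homomorphisms, and $(c_s)\mapsto \pi_s(c_s)$ defines an equivariant $*$-homomorphism $\pi:C\to A$ with $\pi_s\circ f_s=\pi$ for each $s\in S$.

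The key technical step is verifying surjectivity of $\pi$. Since the image of a $*$-homomorphism of $C^*$-algebras is closed, it suffices to prove density. Given $a\in A$ and $\varepsilon>0$, choose $\phi\in C_c(G)$ with $\int_G\phi(g)\,dg=1$ supported close enough to $e$ that $\|\int_G\phi(g)(g\cdot a)\,dg-a\|<\varepsilon$. For each $s\in S$, pick a lift $\tilde c_s\in C_s$ of $a$ with $\|\tilde c_s\|\le 2\|a\|$, and form the smeared element $c_s:=\int_G\phi(g)(g\cdot\tilde c_s)\,dg\in C_s$. Then $(c_s)_{s\in S}$ is uniformly bounded; each $\pi_s(c_s)=\int_G\phi(g)(g\cdot a)\,dg$ is independent of $s$, so $(c_s)\in P$; and the standard $L^1$-continuity estimate $\|h\cdot c_s-c_s\|\le 2\|a\|\cdot\|L_h\phi-\phi\|_1$ shows that $g\mapsto g\cdot c_s$ has a modulus of continuity uniform in $s\in S$, so $(c_s)\in C$. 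The image of $(c_s)$ under $\pi$ is within $\varepsilon$ of $a$, proving density. For the unital case, one arranges each $C_s$ in the fibered product to be unital (choosing the approximating sequences within the class of unital surjections, which suffices since unital surjections onto $A$ already realise $\|\cdot\|_\emu$); then $(1_{C_s})_s\in C$ lifts $1_A$ and serves as a unit of $C$. The main obstacle is verifying surjectivity of $\pi$ on the \emph{continuous} part $P_c$ rather than on $P$, which is precisely what the smearing argument resolves.
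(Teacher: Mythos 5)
Your construction and argument for the general (possibly non-unital) case are correct and essentially identical to the paper's: the same fibered product over a set-indexed family of surjections realizing the supremum, passage to the $G$-continuous part, the same smearing argument with an approximate identity in $C_c(G)\subseteq L^1(G)$ to prove surjectivity, and the same functoriality argument to compare norms. (One harmless imprecision: your $\pi$ is not ``weakly universal'' for \emph{every} equivariant surjection $\pi'\colon C'\to A$, only for the members of the chosen family $S$; but since the supremum over $S$ already equals $\|\cdot\|_\emu$, that is all the argument uses.)

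The unital case, however, contains a genuine gap. You assert parenthetically that ``unital surjections onto $A$ already realise $\|\cdot\|_\emu$'', i.e.\ that $\|a\|_\emu$ equals the supremum of $\|a\|_{\pi'}$ over \emph{unital} equivariant surjections $\pi'\colon C'\to A$. This is precisely the hard point, and it is not justified. The natural route would be to show that for an arbitrary equivariant surjection $\pi\colon C\to A$, the unital extension $\widetilde\pi\colon\widetilde C\to A$ to the unitization satisfies $\|a\|_{\widetilde\pi}\ge\|a\|_\pi$. But the easy functoriality argument, applied to the inclusion $C\hookrightarrow\widetilde C$ (which carries $\ker\pi$ into $\ker\widetilde\pi$), yields only the \emph{opposite} inequality $\|a\|_{\widetilde\pi}\le\|a\|_\pi$. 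Establishing $\|a\|_{\widetilde\pi}\ge\|a\|_\pi$ amounts to proving exactness in the middle of the sequence $0\to (\ker\pi)\rtimes_{\mu,C}G\to(\ker\widetilde\pi)\rtimes_{\mu,\widetilde C}G\to\C\rtimes_\mu G\to 0$, which is exactly the content of the diagram chase in the paper's own proof of the unital case. As the erratum explains, that diagram chase is itself flawed (the idempotent $E$ does not map $C_c(G,\ker\widetilde\pi)$ into $C_c(G,\ker\pi)$), and the unital assertion is currently known only under the additional hypothesis that $\rtimes_\mu$ has the ideal property, via Ozawa's Lemma \ref{lem-Oz}. So your reduction of the unital case, while clean in form, silently assumes the one step that actually requires work, and as stated without the ideal property it is not known to be true.
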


\begin{proof}
Let $S$ be a set of equivariant quotient maps $\pi_s:C_s\to A$ such that for every $a\in C_c(G,A)$,
$$
\|a\|_\emu=\sup_{s\in S}\|a\|_{\pi_s}.
$$
Define 
$$
C_0:=\Big\{(c_s)\in \prod_{s\in S} C_s\mid \pi_s(c_s)=\pi_t(c_t) \text{ for all } s,t\in S\Big\},
$$
and let $C$ be the continuous part of $C_0$, i.e., for all $c=(c_s)_{s\in S}\in C$ the map 
$$
G\to C; \quad g\mapsto \gamma_g(c):=(\gamma^s_g( c_s))_{s\in S}
$$
is continuous, where $\gamma^s:G\to \Aut(C_s)$ denotes the action of $G$ on $C_s$. For any fixed $t\in S$ let $\sigma_t:C_0\to C_t$ denote the projection. We claim that its restriction to $C$ is surjective.
For each $f\in C_c(G)$ and $c=(c_s)_{s\in S}\in C_0$, we define 
$$f*c:=(f*c_s)_{s\in S}\quad\text{with } f*c_s:=\int_G f(g) \gamma^s_g(c_s)\,dg.$$
We claim first that $f*c\in C$ for all $c\in C_0$.
To see this, it suffices to show that if $(g_j)$ is a net converging to the identity in $G$, then 
$$
\lim_{j}\sup_s \|\gamma^s_{g_j}(f*c_s)-f*c_s\|=0.
$$
Note however that for any $s\in S$, if $\delta_g$ denotes the Dirac mass at $g$, then 
\begin{align*}
\|\gamma^s_{g_j}(f*c_s)-f*c_s\| & =\|(\delta_{g_j}*f)*c_s-f*c_s\|\leq \|\delta_{g_j}*f-f\|_{L^1(G)}\|c_s\|\\ & \leq \|\delta_{g_j}*f-f\|_{L^1(G)}\|c\|;
\end{align*}
as $f$ and $c=(c_s)_{s\in S}$ are fixed, this tends to zero as $j$ tends to infinity at a rate independent of $s$ as required. 
It follows that the image $\sigma_t(C)\subseteq C_t$ contains all elements of the form $\{f* c_t: f\in C_c(G), c_t\in C_t\}$; hence to show surjectivity of $\sigma_t$, it suffices to show that this set is dense in $C_t$.
To see this, let $\mathcal V$ be a neighbourhood base of $e\in G$ and for each $V\in \mathcal V$ let
$f_V\in C_c(G)$ be a positive symmetric function with $\supp f_V\subseteq V$ and $\int_G f_V(g) dg=1$. Then
$f_V*c_t$ converges in norm to $c_t$ for any $c_t\in C_t$, $t\in S$.  This proves the claim.

For any fixed $t\in S$, there is thus a surjective quotient map
$$
C\to A;\quad (c_s)_{s\in S}\mapsto \pi_t(c_t).
$$
The definition of $C\subseteq C_0$ implies that this map does not depend on the choice of $t$, so we just denote it $\pi$.

We now have that $C$ is a $G$-algebra equipped with an equivariant surjection $\pi:C\to A$, so it remains to show that $\|a\|_{\emu}=\|a\|_\pi$ for all $a\in C_c(G,A)$.  For $s\in S$, recall that $\sigma_s:C\to C_s$ denotes the coordinate projection.  Then we get an equivariant commutative diagram
$$
\xymatrix{ 0 \ar[r] & I \ar[d] \ar[r] & C \ar[d]^-{\sigma_s} \ar[r]^-\pi & A \ar@{=}[d]\ar[r] & 0  \\ 0 \ar[r] & I_s \ar[r] & C_s  \ar[r]^-{\pi_s} & A \ar[r] & 0 },
$$
where $I$ and $I_s$ are the kernels of $\pi$ and $\pi_s$, respectively.  This gives rise to a $*$-homomorphism
$$
\frac{C\rtimes_\mu G}{I\rtimes_{\mu,C}G}\to \frac{C_s\rtimes_\mu G}{I_s\rtimes_{\mu,C_s}G}
$$
that restricts to the identity on $C_c(G,A)$; as this $*$-homomorphism is contractive, this implies that $\|a\|_\pi\geq \|a\|_{\pi_s}$ for all $a\in C_c(G,A)$.  As $s$ was arbitrary, the choice of $S$ then gives that $\|a\|_\pi\geq \|a\|_\emu$.  By definition of $ \|a\|_\emu$, this implies equality.
Suppose now that $A$ is unital. Let
$$
\xymatrix{ 0 \ar[r] & I \ar[r] & C \ar[r]^-\pi \ar[r] & A \ar[r] & 0 }.
$$
be a short exact sequence such that $\|\cdot\|_\pi=\|\cdot\|_\emu$ on $C_c(G,A)$ as above.
Let $\widetilde{C}$ denote the unitization of $C$ (even if $C$ is already unital) with the extended $G$-action that necessarily fixes the unit.  Let $\widetilde{\pi}:\widetilde{C}\to A$ denote the unique (equivariant) unital extension of $\pi$ to $\widetilde{C}$, and let $J$ be the kernel of $\widetilde{\pi}$.  Noting that $I$ is an ideal in $J$ with quotient $\C$, and taking crossed products, we get a commutative diagram
\begin{equation}\label{3x3}
\xymatrix{ & 0 \ar[d]& 0\ar[d] & 0\ar[d] & \\ 0 \ar[r] & I\rtimes_{\mu,C} G \ar[d]\ar[r] & C\rtimes_\mu G \ar[r] \ar[d]& A\rtimes_\pi G \ar[r] \ar[d]& 0 \\ 
0 \ar[r] & J\rtimes_{\mu,\widetilde{C}} G \ar[d]\ar[r] & \widetilde{C}\rtimes_\mu G \ar[r] \ar[d]& A\rtimes_{\widetilde{\pi}} G \ar[r] \ar[d]& 0  \\
0 \ar[r] & \C\rtimes_\mu G \ar[d]\ar[r] & \C\rtimes_\mu G \ar[r] \ar[d]& 0 \ar[r] \ar[d]& 0 \\
& 0 & 0 & 0 & }.
\end{equation}
The middle column is exact since the unit inclusion  $\sigma:\C\to \widetilde{C}$, which is a $G$-homomorphism, induces a splitting homomorphism 
$\sigma\rtimes G:\C\rtimes_\mu G \to \widetilde{C}\rtimes_\mu G$.  Let now $E:\widetilde{C}\rtimes_\mu G \to \widetilde{C}\rtimes_\mu G$ be defined by 
$$
E:=\text{Id}-(\sigma\rtimes G)\circ (\delta\rtimes G),
$$  
where $\delta:\widetilde{C}\to \C$ denotes the canonical quotient map.  Then $E$ is a bounded linear idempotent operator with norm at most two.  
Note that the definition of $E$ only needs functoriality of $\rtimes_\mu$ for $*$-homomorphisms. Moreover, it is straightforward to check that $E$ restricts to a map $C_c(G,J)\to C_c(G,I)$, whence it takes $J\rtimes_{\mu,\widetilde{C}} G$ onto $I\rtimes _{\mu,C} G$ (it has closed range as it is an idempotent), and acts as the identity on $I\rtimes_{\mu,\widetilde{C}}G$.  It follows from a diagram chase that if $a\in J\rtimes_{\mu,\widetilde{C}} G$ goes to zero under the quotient map to $\C\rtimes_\mu G$, then $E(a)=a$, and thus that $a\in I\rtimes_{\mu,C} G$.  Hence the left hand vertical column is also exact. 

To complete the proof, note that we now have that the left two columns in diagram \eqref{3x3} above are exact, while the rows are all exact by definition.  It follows from a diagram chase that the map $A\rtimes_{\pi}G \to A\rtimes_{\widetilde{\pi}} G$ is an isomorphism, and thus that for any $a\in C_c(G,A)$ we have
$$
\|a\|_{\widetilde{\pi}}=\|a\|_{\pi}=\|a\|_\emu,
$$
and we are done.
\end{proof}

\begin{remark} Note that the algebra $C$ constructed above depends strongly on $A$ and we have no idea about 
its general structure as a $C^*$-algebra. For instance, it is not clear whether we can always find a $C$ with the property in the proposition that is $\sigma$-unital if we assume that $A$ is $\sigma$-unital.

For discrete $G$ we shall see in Section \ref{sec-lift} below that we 
can get much more concrete descriptions of algebras $C$ and surjective morphisms $\pi:C\to A$ which attain 
the norm $\|\cdot\|_\emu$ for $G$.
\end{remark}

Using the above result, we can compute $\C\rtimes_\emu G$.

\begin{proposition}\label{group alg}
The canonical quotient map $\C\rtimes_\emu G\to \C\rtimes_\mu G$ is an isomorphism.  
\end{proposition}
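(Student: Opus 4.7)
The plan is to use Proposition \ref{norm real} to replace the supremum defining $\|\cdot\|_\emu$ by a single quotient norm coming from a \emph{unital} cover, and then exploit the fact that every unital $G$-algebra admits a canonical equivariant map from $\C$ given by the unit inclusion (since automorphisms fix units). Together with Equation \eqref{norm sandwich}, what needs to be proved is just the reverse inequality $\|a\|_\emu\leq \|a\|_\mu$ on $C_c(G,\C)$.

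First, apply Proposition \ref{norm real} to the (unital) $G$-algebra $\C$ to obtain a unital $G$-algebra $C$ together with an equivariant surjection $\pi:C\to \C$ such that $\|a\|_\emu=\|a\|_\pi$ for all $a\in C_c(G,\C)$. Let $I=\ker\pi$. Because $C$ is unital and the $G$-action fixes $1_C$, the unit inclusion
$$\sigma:\C\to C,\qquad \lambda\mapsto \lambda\cdot 1_C,$$
is an equivariant unital $*$-homomorphism, and it satisfies $\pi\circ\sigma=\id_\C$ since $\pi(1_C)=1$.

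Now appeal to functoriality of $\rtimes_\mu$: the homomorphism $\sigma\rtimes G:\C\rtimes_\mu G\to C\rtimes_\mu G$ is contractive, and composing with the quotient map $q:C\rtimes_\mu G\to \C\rtimes_\pi G$ yields a contractive linear map
$$q\circ(\sigma\rtimes G):\C\rtimes_\mu G\to \C\rtimes_\pi G.$$
The key verification is that this composition restricted to $C_c(G,\C)$ is precisely the canonical dense inclusion $C_c(G,\C)\hookrightarrow \C\rtimes_\pi G$: given $a\in C_c(G,\C)$, the function $\sigma\circ a\in C_c(G,C)$ is a continuous lift of $a$ along $\pi$, so its class in $(C\rtimes_\mu G)/(I\rtimes_{\mu,C}G)$ represents $a$ in the inclusion. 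Contractivity of $q\circ(\sigma\rtimes G)$ therefore gives $\|a\|_\pi\leq \|a\|_\mu$, and by the choice of $\pi$ we conclude $\|a\|_\emu=\|a\|_\pi\leq \|a\|_\mu$ for all $a\in C_c(G,\C)$, which is the required equality of completions.

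I do not anticipate a significant obstacle: the preceding proposition has done the substantive work by allowing $C$ to be taken unital. The conceptual point to stress is precisely why unitality is crucial — it is the \emph{only} place where one gets a canonical equivariant splitting of an arbitrary equivariant surjection onto $\C$, since without a unit there is in general no natural equivariant $*$-homomorphism $\C\to C$ at all. This is also why the argument does not obviously generalize to compute $A\rtimes_\emu G$ for arbitrary unital $G$-algebras $A$: one would need an equivariant splitting of $\pi:C\to A$, which is not available beyond the case $A=\C$.
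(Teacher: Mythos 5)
Your proof is correct and takes essentially the same route as the paper: both invoke the unital case of Proposition \ref{norm real} to realize $\|\cdot\|_\emu$ as a single $\pi$-norm with $C$ unital, and then use the equivariant splitting $\C\to C$ given by the unit inclusion to identify $\C\rtimes_\pi G$ with $\C\rtimes_\mu G$ (the paper phrases this as exactness of the split sequence, you as contractivity of $q\circ(\sigma\rtimes G)$, which is the same computation). Note only that, as the erratum explains, the unital case of Proposition \ref{norm real} requires the additional hypothesis that $\rtimes_\mu$ has the ideal property, so this caveat attaches to your argument exactly as it does to the paper's.
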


\begin{proof}
Lemma \ref{norm real} implies in particular that there is a unital $G$-algebra $C$ and a 
$G$-invariant character $\pi:C\to \C$ such that $\|a\|_\emu=\|a\|_\pi$ for all $a\in C_c(G)=C_c(G,\C)$.

Any unital equivariant surjection $\pi:C\to \C$ splits equivariantly by the unit inclusion $*$-homomorphism $\C\to C$, which implies that the induced sequence 
$$
\xymatrix{ 0 \ar[r] & I\rtimes_{\mu,C} G \ar[r] & C\rtimes_\mu G \ar[r] & \C\rtimes_\mu G \ar[r] & 0 }
$$
is exact, and thus that $\|\cdot\|_\emu=\|\cdot\|_\pi=\|\cdot\|_\mu$ on $C_c(G)$.  The result follows.
\end{proof}

Our next aim is to show that $\emu$ is always half-exact as in the next definition, and is in fact minimal amongst all half-exact crossed-product functors dominating $\mu$.

\begin{definition}\label{def-proj-crossed} 
A crossed-product functor $A\mapsto A\rtimes_\mu G$ is called {\em half-exact}
if for every short exact sequence
of $G$-algebras 
$$
\xymatrix{ 0 \ar[r] & I \ar[r] & A \ar[r]^-\rho &  B \ar[r] & 0 }
$$
the sequence
$$
\xymatrix{ 0 \ar[r] & I\rtimes_{\mu,A} G \ar[r] & A\rtimes_\mu G \ar[r]^-{\rho\rtimes G} &  B\rtimes_\mu G \ar[r] & 0 }
$$
is exact, where $I\rtimes_{\mu, A}G$ is as in Definition \ref{complete in}.
\end{definition}

\begin{remark}\label{half ex ex}
A half-exact functor is exact if and only if it has the \emph{ideal property} of \cite[Definition 3.2]{Buss:2014aa}: This means that if $I\subseteq A$ is a $G$-invariant ideal, then the induced map $I\rtimes_\mu G \to A\rtimes_\mu G$ is injective.
The image of this homomorphism is $I\rtimes_{\mu,A}G$ so that $I\rtimes_{\mu,A}G\cong I\rtimes_\mu G$ canonically in this case.
\end{remark}

\begin{proposition}\label{quot lem}
The functor $\rtimes_\emu$ is half-exact.
\end{proposition}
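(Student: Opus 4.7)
The plan is to check exactness at each of the three positions of the sequence, with the core difficulty being exactness in the middle. Exactness at $I\rtimes_{\emu,A} G$ is automatic since this algebra is by definition a subalgebra of $A\rtimes_\emu G$ (the completion of $C_c(G,I)$ in the inherited norm), so the left map is an inclusion. Surjectivity of $\rho\rtimes G: A\rtimes_\emu G\to B\rtimes_\emu G$ is standard: the composition
$$A\rtimes_{\max} G \twoheadrightarrow B\rtimes_{\max} G \twoheadrightarrow B\rtimes_\emu G$$
is surjective (the first map since the maximal crossed product is generated by $A$ and $G$, the second by Equation \eqref{norm sandwich} and Proposition \ref{quot func}), and it factors through $A\rtimes_\emu G$, forcing the second factor to be onto.

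For exactness in the middle, I would invoke Proposition \ref{norm real} to choose an equivariant surjection $\pi:C\to A$ whose associated norm $\|\cdot\|_\pi$ coincides with $\|\cdot\|_\emu$ on $C_c(G,A)$, so that
$$A\rtimes_\emu G \;=\; A\rtimes_\pi G \;=\; (C\rtimes_\mu G)\big/(\ker\pi\rtimes_{\mu,C} G).$$
Set $\tilde I:=\pi^{-1}(I)\supseteq\ker\pi$; then $\rho\circ\pi:C\to B$ is also an equivariant surjection, giving
$$B\rtimes_{\rho\circ\pi} G \;=\; (C\rtimes_\mu G)\big/(\tilde I\rtimes_{\mu,C} G)$$
and, by functoriality, a further quotient $q:A\rtimes_\emu G\to B\rtimes_{\rho\circ\pi} G$ whose kernel is the image of $\tilde I\rtimes_{\mu,C} G$ in $A\rtimes_\emu G$. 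Since $\pi(\tilde I)=I$, that image is precisely the closure of $C_c(G,I)$ in $A\rtimes_\emu G$, which is $I\rtimes_{\emu,A} G$ by Definition \ref{complete in}.

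To conclude, note that $\rho\circ\pi$ is one of the equivariant surjections appearing in the supremum defining $\|\cdot\|_\emu$ on $B$, so there is a canonical quotient $p:B\rtimes_\emu G\to B\rtimes_{\rho\circ\pi} G$. Both $q$ and $p\circ(\rho\rtimes G)$ restrict on $C_c(G,A)$ to $f\mapsto\rho\circ f$, so they coincide on all of $A\rtimes_\emu G$. Hence if $x\in\ker(\rho\rtimes G)$, then $q(x)=0$, and thus $x\in I\rtimes_{\emu,A} G$. The reverse inclusion is immediate since $\rho$ kills $C_c(G,I)$. The main obstacle is to identify the image of $\tilde I\rtimes_{\mu,C} G$ inside $A\rtimes_\emu G$ with the intrinsic completion $I\rtimes_{\emu,A} G$, rather than some a priori different completion of $C_c(G,I)$; this is exactly what Proposition \ref{norm real} makes possible by providing a single $\pi$ that attains the supremum, without which one would have to pass to an inverse limit of the various $A\rtimes_\pi G$'s and lose the clean identification of the kernel.
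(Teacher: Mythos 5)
Your proof is correct and follows essentially the same route as the paper: both hinge on Proposition \ref{norm real} to realize $A\rtimes_\emu G$ as the concrete quotient $(C\rtimes_\mu G)/(\ker\pi\rtimes_{\mu,C}G)$, identify the kernel of the induced map onto $B\rtimes_{\rho\circ\pi}G$ with $I\rtimes_{\emu,A}G$, and then compare $B\rtimes_{\rho\circ\pi}G$ with $B\rtimes_\emu G$ using that $\rho\circ\pi$ occurs in the supremum defining the $\emu$-norm on $C_c(G,B)$ together with functoriality for the reverse inequality. The only difference is organizational: you extract the kernel identification directly from the third isomorphism theorem for the nested ideals $\ker\pi\rtimes_{\mu,C}G\subseteq\pi^{-1}(I)\rtimes_{\mu,C}G$ of $C\rtimes_\mu G$, whereas the paper runs a $3\times 3$ diagram chase to reach the same identification.
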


\begin{proof}
Fix a $G$-invariant ideal $I$ in a $G$-algebra $A$ and write $\sigma:A\to A/I$ for the quotient map. Let
$$
\xymatrix{ 0 \ar[r] & J \ar[r] & C \ar[r]^{\pi} & A \ar[r] & 0}.
$$
be any equivariant short exact sequence such that $\|a\|_\emu=\|a\|_\pi$ for all $a\in C_c(G,A)$, as exists 
by Proposition \ref{norm real}.
This fits into a commutative diagram 
$$
\xymatrix{ & 0 \ar[d]& 0 \ar[d]& 0 \ar[d] &  \\  0\ar[r] & J\ar[r] \ar[d] & J\ar[r]  \ar[d]& 0 \ar[d] \ar[r]&0 \\
0\ar[r] & \pi^{-1}(I) \ar[r]  \ar[d]^-{\pi_I}& C\ar[r]^-{\sigma\circ \pi}  \ar[d]^-\pi \ar[r] & A/I \ar[d] \ar[r]&0 \\
0\ar[r] & I \ar[r] \ar[d]& A\ar[r]^-\sigma  \ar[d]& A/I \ar[d] \ar[r]&0 \\ & 0 & 0 & 0 & }
$$
with all rows and columns exact, where $\pi_I$ denotes the restriction of $\pi$ to $\pi^{-1}(I)$.  Taking crossed products gives a commutative diagram
{\small $$
\xymatrix{ & 0 \ar[d]& 0 \ar[d]& 0 \ar[d] &  \\  0\ar[r] & J\rtimes_{\mu,C}G \ar[r] \ar[d] & J\rtimes_{\mu,C} G\ar[r]  \ar[d]& 0 \ar[d] \ar[r]&0 \\
0\ar[r] & \pi^{-1}(I)\rtimes_{\mu,C} G \ar[r]  \ar[d] & C\rtimes_\mu G \ar[r]  \ar[d] \ar[r] & (A/I)\rtimes_{\sigma\circ \pi} G \ar[d] \ar[r]&0 \\
0\ar[r] & \frac{\pi^{-1}(I)\rtimes_{\mu,C} G}{J\rtimes_{\mu,C} G} \ar[r] \ar[d]& A\rtimes_{\emu} G\ar[r]  \ar[d]& \frac{A\rtimes_{\emu} G}{(\pi^{-1}(I)\rtimes_{\mu,C} G)~/~(J\rtimes_{\mu,C} G)} \ar[d] \ar[r]&0 \\ & 0 & 0 & 0 & }
$$}
where the first two columns and first two rows (at least) are exact.  The canonical isomorphisms
\begin{align*}
(A/I)\rtimes_{\sigma\circ \pi} G & \cong \frac{C\rtimes_\mu G}{\pi^{-1}(I)\rtimes_{\mu,C} G} \cong \frac{(C\rtimes_\mu G)~/~(J\rtimes_{\mu,C} G)}{(\pi^{-1}(I)\rtimes_{\mu,C} G)~ /~ (J\rtimes_{\mu,C} G)} \\ & \cong \frac{A\rtimes_{\emu} G}{(\pi^{-1}(I)\rtimes_{\mu,C} G)~ /~ (J\rtimes_{\mu,C} G)}
\end{align*}
identify the bottom right term with $A\rtimes_{\sigma\circ \pi} G$, and a diagram chase shows that the map 
$$
\frac{\pi^{-1}(I)\rtimes_{\mu,C} G}{J\rtimes_{\mu,C} G}\to A\rtimes_{\emu} G
$$ 
is injective.   Thus in fact all the rows and columns in the above diagram are exact. 

Now, it follows that $\frac{\pi^{-1}(I)\rtimes_{\mu,C} G}{J\rtimes_{\mu,C} G}$ identifies with the completion $I\rtimes_{\emu,A}G$ of $C_c(G,I)$ inside $A\rtimes_{\emu} G$, and that we have a canonical identification
$$
\frac{A\rtimes_{\emu} G}{I\rtimes_{{\emu},A}G}=(A/I)\rtimes_{\sigma\circ \pi} G.
$$
From this, we see that for any $a\in C_c(G,A/I)$, 
$$
\|a\|_{(A\rtimes_{{\emu}} G)~ / ~(I\rtimes_{{{\emu}},A}G)} = \|a\|_{(A/I)\rtimes_{\sigma\circ \pi} G}\leq \|a\|_\emu,
$$
where the right hand inequality follows from the definition of the $\emu$-norm on $C_c(G,A/I)$. 

To see the opposite inequality observe that the fact
 that $\rtimes_\emu$ is a functor gives a quotient map
$$
\frac{A\rtimes_\emu G}{I\rtimes_{\emu,A}G}\to (A/I)\rtimes_\emu G,
$$
so we are done.
\end{proof}

\begin{proposition}\label{min ex}
The crossed product $\rtimes_\emu$ is minimal amongst all half-exact crossed products that dominate $\rtimes_\mu$.
\end{proposition}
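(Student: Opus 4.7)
The plan is to show that if $\rtimes_\nu$ is any half-exact crossed-product functor with $\|a\|_\nu\geq \|a\|_\mu$ for all $a\in C_c(G,A)$ and all $G$-algebras $A$, then $\|a\|_\nu\geq \|a\|_\emu$ for all such $a$. By the definition of the $\emu$-norm as a supremum, this reduces to showing that $\|a\|_\nu\geq \|a\|_\pi$ for every equivariant surjection $\pi:C\to A$ and every $a\in C_c(G,A)$.

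So fix such a surjection $\pi:C\to A$ with kernel $I$. Since $\|\cdot\|_\nu\geq \|\cdot\|_\mu$, the identity on $C_c(G,C)$ extends to a surjective $*$\nb-homomorphism $q_C:C\rtimes_\nu G\to C\rtimes_\mu G$. Because $q_C$ is contractive and agrees with the identity on the common dense subalgebra $C_c(G,I)\subseteq C_c(G,C)$, it carries the closure $I\rtimes_{\nu,C} G$ of $C_c(G,I)$ inside $C\rtimes_\nu G$ into the closure $I\rtimes_{\mu,C} G$ of $C_c(G,I)$ inside $C\rtimes_\mu G$. Hence $q_C$ descends to a surjection
$$
\frac{C\rtimes_\nu G}{I\rtimes_{\nu,C} G}\longrightarrow \frac{C\rtimes_\mu G}{I\rtimes_{\mu,C} G}=A\rtimes_\pi G
$$
which again restricts to the identity on $C_c(G,A)$.

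Now I invoke half-exactness of $\rtimes_\nu$: applied to the sequence $0\to I\to C\xrightarrow{\pi} A\to 0$, it gives a canonical isomorphism
$$
(C\rtimes_\nu G)\big/(I\rtimes_{\nu,C} G)\;\cong\; A\rtimes_\nu G
$$
under which the quotient of the identity on $C_c(G,C)$ corresponds to $\pi\rtimes G$ on $C_c(G,A)$. Composing with the displayed surjection gives a contractive $*$\nb-homomorphism $A\rtimes_\nu G\to A\rtimes_\pi G$ that is the identity on $C_c(G,A)$. Therefore $\|a\|_\pi\leq \|a\|_\nu$ for every $a\in C_c(G,A)$, and taking the supremum over all equivariant surjections $\pi$ yields $\|a\|_\emu\leq \|a\|_\nu$, as desired.

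The only non\nb-routine point is verifying that the quotient map $q_C$ carries $I\rtimes_{\nu,C}G$ into $I\rtimes_{\mu,C}G$, but this is immediate from continuity and the fact that both subalgebras are defined as the closures of the same dense subspace $C_c(G,I)$ in their ambient crossed products. Everything else is a diagram chase using half\nb-exactness and the universal characterization of the $\emu$-norm as the supremum over $\pi$\nb-norms.
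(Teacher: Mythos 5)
Your argument is correct. It is essentially the same diagram chase as the paper's: compare the quotient $(C\rtimes_\nu G)/(I\rtimes_{\nu,C}G)$ with $(C\rtimes_\mu G)/(I\rtimes_{\mu,C}G)$ via the contractive map induced by domination, and use half-exactness of $\rtimes_\nu$ to identify the former with $A\rtimes_\nu G$. The one structural difference is that the paper applies this to a single surjection $\pi:C\to A$ chosen via Proposition \ref{norm real} so that $\|\cdot\|_\pi=\|\cdot\|_\emu$, whereas you run the argument for an arbitrary equivariant surjection $\pi$ and take the supremum at the end; this makes your proof independent of the norm-attainment result, which is a small but genuine simplification of the logical dependencies. (For completeness one should also record, as the paper does, that $\rtimes_\emu$ is itself half-exact and dominates $\rtimes_\mu$ --- but these are exactly Proposition \ref{quot lem} and the inequalities \eqref{norm sandwich}, so nothing is missing.)
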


\begin{proof}
It is shown in Proposition \ref{quot lem} that $\rtimes_\emu$ is half-exact, and it dominates $\rtimes_\mu$ by the inequalities in Equation \eqref{norm sandwich}.  

Let $\rtimes_\nu$ be any other half exact crossed-product functor that dominates $\rtimes_\mu$.  Let $A$ be a $G$-algebra, and let 
$$
\xymatrix{0\ar[r] & J \ar[r] & C \ar[r]^-{\pi} & A \ar[r] & 0 }
$$
be a short exact sequence of $G$-algebras as in Proposition \ref{norm real}.  We thus get a commutative diagram
$$
\xymatrix{0\ar[r] & J\rtimes_{\nu,C} G \ar[r] \ar[d] & C\rtimes_\nu G  \ar[r] \ar[d] & A\rtimes_\nu G \ar[r] \ar@{-->} [d]& 0  \\ 0\ar[r] & J\rtimes_{\mu,C} G \ar[r] & C\rtimes_\mu G  \ar[r] & A\rtimes_{{\emu}} G \ar[r] & 0}
$$
of short exact sequences.  As the rows are exact, the dashed arrow can be filled in with a (necessarily surjective) $*$-homomorphism
which extends the identity on $C_c(G,A)$, and we are done.
\end{proof}

\begin{remark}\label{cub rem}
It is shown in \cite[Theorem A]{Brodzki:2015kb} that a locally compact\footnote{The reference given assumes that $G$ is second countable for this result; however, Kang Li has pointed out to us that one can use the structure theory of locally compact groups to deduce the general case from this.} group $G$ is exact if and only if the sequence
\begin{equation*}\label{eq:can-sequence}
\xymatrix{0\ar[r] & \contz(G)\rtimes_{r} G \ar[r] & \contub(G)\rtimes_r G  \ar[r] & \cont(\partial G)\rtimes_r G \ar[r]& 0  }
\end{equation*}
is exact, where $\contub(G)$ is the \cstar{}algebra of bounded left uniformly continuous functions endowed with the left translation $G$-action, that is, the continuous part of the translation $G$-action on $\contb(G)=M(\contz(G))$, 
and $\cont(\partial G)\defeq \contub(G)/\contz(G)$ is the quotient $G$-algebra. A simple chase with the diagram
$$
\xymatrix{0\ar[r] & \contz(G)\rtimes_{\mathcal{E}(r)} G \ar[r] \ar[d] & \contub(G)\rtimes_{\mathcal{E}(r)} G  \ar[r] \ar[d] & \cont(\partial G)\rtimes_{\mathcal{E}(r)} G \ar[r] \ar[d]& 0  \\ 0\ar[r] & \contz(G)\rtimes_{r} G \ar[r] & \contub(G)\rtimes_r G  \ar[r] & \cont(\partial G)\rtimes_{{r}} G \ar[r] & 0}
$$
together with fact that $\contz(G)\rtimes_{\mathcal{E}(r)} G=\contz(G)\rtimes_r G$ shows that $G$ is exact iff $\cont(\partial G)\rtimes_{\mathcal{E}(r)}G=\cont(\partial G)\rtimes_r G$, that is, the right vertical arrow in the above diagram is an isomorphism. For a general crossed-product functor $\rtimes_\mu$, one can follow exactly the same idea and prove part of the analogous result: if $\cont(\partial G)\rtimes_\mu G=\cont(\partial G)\rtimes_\emu G$, then the sequence
\begin{equation*}
\xymatrix{0\ar[r] & \contz(G)\rtimes_{\mu} G \ar[r] & \contub(G)\rtimes_\mu G  \ar[r] & \cont(\partial G)\rtimes_\mu G \ar[r]& 0  }
\end{equation*}
is exact. It is, however, not clear whether the exactness of a general $\rtimes_\mu$ can be detected by this sequence alone.
\end{remark}

\section{The ideal property and exactness}\label{ex sec}

Throughout this section, $G$ again denotes a locally compact group, and $\rtimes_\mu$ a crossed product functor for $G$.  We will mainly be interested in the case that $\rtimes_\mu=\rtimes_{\mathcal{E}(r)}$ or $\rtimes_\mu=\rtimes_r$, and the reader is encouraged to bear those two cases in mind; nonetheless, working in general is no more difficult, and seemed more conceptual, so we do this.

Our goal is to prove a necessary and sufficient condition for $\rtimes_\mu$ to be exact.
As a consequence we shall see that every half-exact crossed-product functor in the sense of Definition \ref{def-proj-crossed} is exact and, in particular, 
that the functor $\rtimes_\emu$ as constructed in the previous section is always exact.  It follows from this and Proposition \ref{min ex} that if $\rtimes_\mu=\rtimes_r$, then $\rtimes_\emu$ is the minimal exact crossed-product functor.  

The formulation and proof of our main theorem (see Theorem \ref{thm-exact} below)
is inspired in part by work of of Matsumura \cite{Matsumura:2012aa}, which is in turn inspired by the equivalence between property $C'$ of Archbold and Batty \cite{Archbold:1980aa} and exactness of the minimal tensor product as discussed in \cite[Chapter 9]{Brown:2008qy}.

We need some conventions, which are set up as follows.

\begin{definition}\label{conv}
Let $A$ be a  $G$-algebra with action $\alpha:G\to\Aut(A)$. 
Let $\iota\rtimes u: A\rtimes_\mu G\to \B(H)$ be a 
faithful and nondegenerate representation of $A\rtimes_\mu G$ on some Hilbert space $H$
and let $A'':=\iota(A)''$ and $(A\rtimes_\mu G)''$ denote the double commutants of $A$ and $A\rtimes_\mu G$ respectively in $\B(H)$.
We write $\iota'':A''\to (A\rtimes_\mu G)''$ for the canonical inclusion; note that this is the unique normal extension of $\iota$ to $A''$. 

Now let $I$ be a large directed set such that every element $a\in A''$ can be obtained as a limit of a bounded net 
$(a_i)_{i\in I}$  over the directed set $I$ in the strong* topology (for example, Kaplansky's density theorem implies that letting $I$ be a neighbourhood base of $0\in \B(H)$ for the strong* topology would work). Then 
$$
A^I:=\{(a_i)_{i\in I} \mid (a_i)_{i\in I} \text{ is a strong* convergent net}\}\subseteq \prod_{i\in I}A
$$
is a $C^*$-algebra, because multiplication and involution are strong*-continuous on bounded subsets of $\B(H)$.  There is moreover a $*$-homomorphism
$$
\rho:A^I\to A'';\quad  \rho\left((a_i)\right)=\text{strong*}\lim a_i\in A'',
$$
which is surjective by the choice of $I$.  Since $(\iota, u)$ is a covariant representation it follows that for each $g\in G$ the automorphism $\alpha_g$ of $A\cong \iota(A)$ extends to $A''$ via the automorphism $\alpha''_g:=\Ad u_g$ on $A''$.  Moreover, for each $g\in G$, we get a $*$-automorphism $\alpha^I_g$ of $\prod_{\in I}A$ defined componentwise by $\alpha^I_g((a_i)):=(\alpha_g(a_i))$, and the fact that the underlying unitary representation $u:G\to \mathcal{U}(H)$ is strong* continuous implies that $\alpha^I_g$ preserves $A^I$; we use the same notation for the restricted $*$-automorphism of $A^I$.  We thus get homomorphisms
\begin{equation}\label{actions}
\alpha'':G\to \text{Aut}(A''), \quad \alpha^I:G\to \text{Aut}(A^I),
\end{equation}
neither of which is necessarily continuous for the point-norm topologies on the right hand side.  The map $\rho$ is equivariant for these (not-necessarily-continuous) actions by strong* continuity of $u$ again.  Finally, we denote by $A^I_c$ and $A''_c$ the $C^*$-subalgebras of $A^I$ and $A''$ consisting of continuous elements for the actions in line \eqref{actions}, and note that $\rho$ restricts to an equivariant map
$$
\rho:A^I_c\to A''_c
$$
of $G$-algebras.
\end{definition}

\begin{lemma}\label{limit surj}
With notation as in Definition \ref{conv}, the map $\rho:A^I_c\to A''_c$ is surjective.
\end{lemma}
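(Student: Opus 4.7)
The plan is to exploit the fact that the image of the $*$-homomorphism $\rho\colon A^I_c \to A''_c$ is automatically norm closed, so it suffices to produce a norm dense subset of $A''_c$ inside this image. I will exhibit explicit preimages of the smoothed elements $f * a := \int_G f(g)\alpha''_g(a)\,dg$ for $f \in C_c(G)$ and $a \in A''_c$, and then conclude via an approximate unit argument modelled on the one in Proposition \ref{norm real}.

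Concretely, fix $a \in A''_c$. Using surjectivity of the unrestricted $\rho\colon A^I \to A''$ (which holds by construction of $I$), pick a net $(a_i)_{i\in I} \in A^I$ with $a_i \to a$ strong*. For $f \in C_c(G)$, set $b_i := \int_G f(g) \alpha_g(a_i)\,dg \in A$, a norm Bochner integral that is well defined since each $a_i$ lies in $A$ and the action on $A$ is norm continuous. I claim $(b_i) \in A^I_c$ and $\rho((b_i)) = f*a$. For the strong* convergence required by the latter: the identity $(b_i - f*a)\xi = \int_G f(g)u_g(a_i - a)u_g^*\xi\,dg$ yields
$$
\|(b_i - f*a)\xi\| \le \int_G |f(g)|\,\|(a_i - a)u_g^*\xi\|\,dg,
$$
and the scalar dominated convergence theorem applies because the integrand is bounded by $\bigl(\sup_i\|a_i\| + \|a\|\bigr)|f(g)|\,\|\xi\|$ and tends to $0$ pointwise in $g$ by the strong convergence $a_i \to a$; the adjoint case is handled symmetrically using $a_i^* \to a^*$ strongly. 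That $(b_i)\in A^I_c$ is then the familiar smoothing computation already used in Proposition \ref{norm real}: $\alpha_g(b_i) = (\delta_g * f)*a_i$, so
$$
\|\alpha_g(b_i) - b_i\| \le \|\delta_g * f - f\|_{L^1(G)}\|a_i\|,
$$
with the bound uniform in $i$ since $\sup_i \|a_i\| < \infty$.

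Finally, take a net $(f_V)$ of nonnegative symmetric functions in $C_c(G)$ with $\int f_V = 1$ and $\supp f_V \subseteq V$ shrinking to $\{e\}$, as in Proposition \ref{norm real}. Since $a \in A''_c$ means precisely that $g \mapsto \alpha''_g(a)$ is norm continuous, we get $\|f_V * a - a\|\le \sup_{g\in V}\|\alpha''_g(a) - a\| \to 0$. Hence every $a \in A''_c$ is a norm limit of elements of $\rho(A^I_c)$, and since $\rho(A^I_c)$ is automatically norm closed (image of a $*$-homomorphism of $C^*$-algebras), $\rho(A^I_c) = A''_c$. The main subtlety is verifying strong* convergence of the Bochner-averaged nets to an element that only exists in the double dual; this works cleanly because, after testing against a vector, the dominant function is scalar valued and integrable, so ordinary dominated convergence suffices.
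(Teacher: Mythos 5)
Your overall strategy is the same as the paper's: reduce to the norm-dense set of smoothed elements $f*a$ (using that the image of a $*$-homomorphism of $C^*$-algebras is closed), exhibit $(f*a_i)_{i}$ as an explicit preimage in $A^I_c$ of $f*a$, and verify $G$-continuity of that element via the $L^1$-estimate $\|\alpha_g(f*a_i)-f*a_i\|\le\|\delta_g*f-f\|_{L^1(G)}\|a_i\|$, which is uniform in $i$. All of this matches the paper's proof and is fine, as is the final approximate-identity step $f_V*a\to a$ using norm continuity of $g\mapsto\alpha''_g(a)$ for $a\in A''_c$.

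The one step that does not hold as stated is the appeal to the dominated convergence theorem. The index set $I$ is an arbitrary directed set, and dominated convergence fails for nets: for instance, with $I$ the finite subsets of $[0,1]$ ordered by inclusion and $\phi_F=\chi_F$, one has $\phi_F\to 1$ pointwise and $|\phi_F|\le 1$, yet $\int_0^1\phi_F=0$ for all $F$. So pointwise convergence of $g\mapsto\|(a_i-a)u_g^*\xi\|$ to $0$ together with an integrable uniform bound does not by itself yield $\int_G|f(g)|\,\|(a_i-a)u_g^*\xi\|\,dg\to 0$. The repair is exactly the claim with which the paper opens its proof: a uniformly bounded, strong* convergent net converges uniformly on norm-compact subsets of $H$ (proved by covering the compact set with finitely many $\delta$-balls, $\delta=\epsilon/4R$ where $R=\sup_i\|a_i\|$). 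Applying this to the compact set $\{u_g^*\xi:\ g\in\supp f\cup(\supp f)^{-1}\}$ gives $\sup_{g\in\supp f}\|(a_i-a)u_g^*\xi\|\to 0$, after which your integral estimate goes through. Equivalently, one can observe that the functions $g\mapsto\|(a_i-a)u_g^*\xi\|$ form an equicontinuous family, since $\bigl|\,\|(a_i-a)u_g^*\xi\|-\|(a_i-a)u_h^*\xi\|\,\bigr|\le\bigl(\sup_i\|a_i\|+\|a\|\bigr)\|(u_g^*-u_h^*)\xi\|$, and a uniformly bounded equicontinuous net converging pointwise on a compact set converges uniformly there. With this repair your argument is complete and coincides with the paper's.
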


\begin{proof} We first claim that if $(a_i)_{i\in I}$ is a bounded net which converges to $a\in A''$ in the strong* topology, then for each compact subset $K\subseteq H$ and $\epsilon>0$ there exists an $i_0\in I$ such that
$$\forall \xi\in K,\forall i\geq i_0: \|(a_i-a)\xi\|, \|(a_i-a)^*\xi\|<\epsilon.$$
Indeed, let $\epsilon>0$ and let $R:=\sup_{i}\|a_i\|$. Then there exist finitely many vectors 
$\xi_1,\ldots, \xi_k\in H$ such that $K\subseteq \bigcup_{l=1}^k B_{\delta}(\xi_l)$ with $\delta=\frac{\epsilon}{4R}$.
Choose $i_0\in I$ such that for all $i\geq i_0$ and all $1\leq l\leq k$ we have
$$\| (a_i-a)\xi_l\|, \|(a_i-a)^*\xi_l\|< \frac{\epsilon}{2}.$$
Then for all $\xi\in K$ there exists $l\in \{1,\ldots,k\}$ such that $\|\xi-\xi_l\|\leq \frac{\epsilon}{4R}$ and then
for all $i\geq i_0$ we get
$$\|(a_i-a)\xi\|\leq \|a_i(\xi-\xi_l)\|+\|(a_i-a)\xi_l\|+\|a(\xi_l-\xi)\|<\epsilon$$
and similarly $\|(a_i-a)^*\xi\|<\epsilon$ for all $i\geq i_0$, completing the proof of the claim.

For $a\in A_c''$ and $f\in C_c(G)$ define
$$
f*a:=\int_G f(g)\alpha_g''(a)dg.
$$
As in the proof of Lemma \ref{norm real} it follows from the existence of an approximate identity for $L^1(G)$ in $C_c(G)$ that the collection 
$$\{f*a\mid f\in C_c(G),a\in A''_c\}$$ 
is norm-dense in $A''_c$. 
We now show that all such elements $f*a$ lie in the image of $\rho:A^I_c\to A_c''$, which will complete the proof.

Fix then $f\in C_c(G)$ and $a\in A''$.
Since the representation $u:G\to \U(H)$ is strong* continuous,
it follows that for all $\xi\in H$ the set $K_{\xi}:=\{u_g\xi: g\in \supp f\cup (\supp f)^{-1}\}$ is compact in $H$.
Hence, given an element 
$(a_i)_{i\in I}\in A^I$ with $a_i\stackrel{\text{strong*}}{\to} a$, then for all $g\in\supp(f)$
we have
\begin{align*}\| f(g)\alpha_g(a_i)\xi-f(g)\alpha_g''(a)\xi\| & =
\|f(g)u_g(a_i-a)u_g^*\xi\|\\ & \leq \|f\|_\infty \sup_{g\in\text{supp}(f)} \|(a_i-a)u_g^*\xi\|
\end{align*}
and by the claim at the start of the proof, the right hand side converges to zero, whence the left hand side converges to zero uniformly 
for $g$ in the support of $f$.  Thus it follows that for all $\xi\in H$ we have
$$(f*a_i)\xi=\int_G f(g)\alpha_g(a_i)\xi dg\to \int_G f(g) \alpha''_g(a)\xi dg= (f*a)\xi$$
and, similarly, $(f*a_i)^*\xi\to (f*a)^*\xi$.
Hence $\rho\left( (f*a_i)_i\right)=f*a$.
Now, as in the proof of Lemma \ref{norm real} we see that 
$(f*a_i)_i$ is a $G$-continuous element of $\prod_{i\in I}A$ (and hence of $A^I$), which completes the proof.  
\end{proof}

\begin{lemma}\label{prod cont}
With notation as in Definition \ref{conv}, the $*$-homomorphism
$$
(\iota''\circ \rho)\rtimes u:C_c(G,A^I_c)\to (A\rtimes_{\mu}G)''
$$
extends to a $*$-homomorphism
$$
A^I_c\rtimes_{\mu} G\to (A\rtimes_{\mu} G)''.
$$
\end{lemma}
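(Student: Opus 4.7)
The plan is to establish the norm inequality
$$\|(\iota''\circ\rho)\rtimes u(f)\|_{\B(H)}\le\|f\|_{A^I_c\rtimes_\mu G}$$
for every $f\in C_c(G,A^I_c)$, so that the $*$-homomorphism $(\iota''\circ\rho)\rtimes u$ extends by continuity to $A^I_c\rtimes_\mu G$. The extension will automatically take values in $(A\rtimes_\mu G)''$, because the values on the dense subalgebra $C_c(G,A^I_c)$ will be exhibited as strong* limits of operators in $\iota\rtimes u(A\rtimes_\mu G)\subseteq(A\rtimes_\mu G)''$.

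The strategy is to compare $(\iota''\circ\rho)\rtimes u$ with the integrated forms coming from coordinate projections. For each $i\in I$, the projection $p_i\colon A^I_c\to A$ is an equivariant $*$-homomorphism from $(A^I_c,\alpha^I)$ to $(A,\alpha)$. Functoriality of $\rtimes_\mu$ thus yields $p_i\rtimes G\colon A^I_c\rtimes_\mu G\to A\rtimes_\mu G$, and composing with the given representation $\iota\rtimes u\colon A\rtimes_\mu G\to\B(H)$ produces a $*$-homomorphism $A^I_c\rtimes_\mu G\to(A\rtimes_\mu G)''$ whose value on $f\in C_c(G,A^I_c)$ is the integrated form $(\iota\circ p_i)\rtimes u(f)$, with norm at most $\|f\|_{A^I_c\rtimes_\mu G}$.

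The heart of the argument is the claim that
$$(\iota''\circ\rho)\rtimes u(f)=\text{strong*}\lim_i\,(\iota\circ p_i)\rtimes u(f)$$
in $\B(H)$ for every $f\in C_c(G,A^I_c)$. Granting this, the operator norm is lower semicontinuous in the weak operator topology, hence also in the strong* topology, so
$$\|(\iota''\circ\rho)\rtimes u(f)\|\le\liminf_i\|(\iota\circ p_i)\rtimes u(f)\|\le\|f\|_{A^I_c\rtimes_\mu G},$$
which is the required norm bound.

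The main obstacle is proving the strong* limit identity. Fix $\xi\in H$ and set $K=\supp(f)$. The difference applied to $\xi$ equals $\int_K\iota''\bigl(\rho(f(g))-p_i(f(g))\bigr)u_g\xi\,dg$, and the integrand tends to $0$ pointwise in $g\in K$ because, by the very construction of $A^I_c$, the net $p_i(f(g))$ converges strong* to $\rho(f(g))$ in $A''$ for each fixed $g$. To pull the limit through the integral I would reuse the opening uniform-convergence claim in the proof of Lemma \ref{limit surj}: the set $\{u_g\xi:g\in K\}\subseteq H$ is norm-compact and $\{f(g):g\in K\}\subseteq A^I_c$ is norm-compact, so a finite-$\delta$-net argument on $K$ combined with uniform strong* convergence on compact subsets of $H$ promotes the pointwise convergence to uniform convergence of $\|(\rho(f(g))-p_i(f(g)))u_g\xi\|$ in $g\in K$, which is enough to interchange limit and integral; the identical argument applied to adjoints upgrades weak to strong* convergence, completing the plan.
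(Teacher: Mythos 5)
Your proof is correct and follows essentially the same route as the paper's: compare the integrated form of $(\iota''\circ\rho,u)$ with the integrated forms of the coordinate evaluations (which are contractive for the $\mu$-norm by functoriality of $\rtimes_\mu$ and faithfulness of $\iota\rtimes u$), identify the former as the strong* limit of the latter, and conclude because strong* (indeed weak operator) limits do not increase norms. The only cosmetic difference is in justifying the limit--integral interchange: the paper approximates the integrals $\int_G f_i(g)u_g\,dg$ by Riemann sums uniformly in $i$ using equicontinuity of the net $(f_i)$, while you estimate the integrand directly for a fixed vector $\xi$ via a two-parameter $\delta$-net argument on the norm-compact sets $\{f(g):g\in K\}$ and $\{u_g\xi:g\in K\}$ --- the same compactness/equicontinuity idea in different clothing.
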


\begin{proof}
It will suffice to show that if $f\in C_c(G,A^I_c)$, then 
$$
\|\iota''\circ \rho\circ f\|_{(A\rtimes_{\mu} G)''}\leq \|f\|_{A^I_c\rtimes_{\mu} G}.
$$
Write $f:G\to A^I_c\subseteq \prod_{i\in I}A$ as a net $(f_i)$ of functions $f_i:G\to A$; note that the net $(f_i)_i$ is equicontinuous, uniformly bounded, and all the $f_i$ have support in some fixed compact subset of $G$.  Computing, we get 
$$
\|\iota''\circ \rho\circ f\|_{(A\rtimes_{\mu} G)^{''}}=\Big\|\int_G\text{strong*-}\lim_{i}f_i(g)u_g dg\Big\|_{\mathcal{B}(H)}.
$$
Write 
\begin{equation}\label{limit fun}
f_\infty(g)=\text{strong*-}\lim_{i}f_i(g),
\end{equation}
so $f_\infty:G\to A''_c$ is a (norm) continuous and compactly supported function.  We first claim that 
$$
\int_G\text{strong*-}\lim_{i}f_i(g)u_g dg=\text{strong*-}\lim_{i}\int_Gf_i(g)u_g dg.
$$
Fix $\epsilon>0$.  Using uniform boundedness, uniform compact support, and equicontinuity of the net $(f_i)$, there is a finite subset $\{g_1,...,g_N\}$ of $G$ and scalars $\{t_1,...,t_N\}$ such that 
$$
\left\|\int_Gf_i(g){u_g} dg-\sum_{k=1}^Nt_kf_{i}(g_k){u_{g_k}}\right\|<\epsilon,
$$
for all $i$, and similarly for the limit function $f_\infty\in C_c(G,A''_c)$ as in line \eqref{limit fun} above we have
\begin{equation}\label{lim less}
\left\|\int_Gf_{\infty}(g){u_g} dg-\sum_{k=1}^Nt_kf_{\infty}(g_k){u_{g_k}}\right\|<\epsilon.
\end{equation}
Hence, using that strong* limits do not increase norms, 
$$
\left\|\text{strong*-}\lim_{i}\Big(\int_Gf_i(g){u_g} dg-\sum_{k=1}^Nt_kf_i(g_k){u_{g_k}}\Big)\right\|\leq \epsilon,
$$
and so using that strong* limits commute with finite linear combinations 
$$
\left\|\text{strong*-}\lim_{i}\int_Gf_i(g){u_g} dg-\sum_{k=1}^Nt_k f_\infty(g_k){u_{g_k}}\right\|\leq \epsilon.
$$
Combining this with the inequality in line \eqref{lim less}, we get that
$$
\left\|\int_Gf_{\infty}(g){u_g} dg-\text{strong*-}\lim_{i}\int_Gf_i(g){u_g} dg\right\|<2\epsilon~;
$$
as $\epsilon$ was arbitrary, this completes the proof of the claim.

Now, using the claim,
\begin{align}\label{strong lim}
\|\iota''\circ \rho\circ f\|_{(A\rtimes_{\mu}G)''}&=\Big\|\text{strong*-}\lim_{i}\int_Gf_i(g){u_g}dg\Big\|_{\mathcal{B}(H)}\nonumber\\
&\leq \sup_i\Big\|\int_Gf_i(g){u_g} dg\Big\|_{{\mathcal B(H)}},
\end{align}
where the inequality follows again as strong* limits do not increase norms.   On the other hand, the evaluations $\ev_i:A^I_c\to A$ induce norm-decreasing $*$-homomorphisms 
$\ev_i\rtimes G: A^I_c\rtimes_\mu G\to A\rtimes_\mu G$  for all $i\in I$, and therefore 
\begin{equation}\label{prod inq}
\|f\|_{A^I_c\rtimes_\mu G}\geq \sup_i\|f_i\|_{A\rtimes_\mu G}=\left\|\left(\int_Gf_i(g){u_g} dg\right)\right\|_{\prod_{i\in I} \mathcal B(H)},
\end{equation}
where the equality follows as the representation of $A\rtimes_\mu G$ into $\mathcal B(H)$ is faithful.  The norm on the right in line \eqref{strong lim} is the norm of the net $\left(\int_Gf_i(g){u_g} dg\right)_{i\in I}$ in the product $\prod_{i\in I} \mathcal B(H)$,   
whence combining line \eqref{strong lim} with line \eqref{prod inq} gives that 
$$
\| \iota\circ \rho\circ f\|_{(A\rtimes_\mu G)''}\leq \Big\|\left(\int_Gf_i(g){u_g} dg\right)\Big\|_{\prod_i \mathcal B(H)}\leq \|f\|_{A^I_c\rtimes_\mu G} 
$$
as desired.
\end{proof}

\begin{lemma}\label{mats lem}
Suppose that $\rtimes_\mu$ is a half-exact crossed-product functor in the sense of Definition \ref{def-proj-crossed}. Then
with notation as in Definition \ref{conv}, the $*$-homomorphism
$$
\iota''\rtimes u:C_c(G,A_c'')  \to (A\rtimes_{\mu} G)''
$$
extends to a $*$-homomorphism
$$
A_c''\rtimes_{\mu} G\to (A\rtimes_{\mu} G)''.
$$
\end{lemma}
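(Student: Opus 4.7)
The plan is to leverage half-exactness to descend the $*$-homomorphism supplied by Lemma \ref{prod cont} through the surjection of Lemma \ref{limit surj}. More precisely, by Lemma \ref{limit surj} the map $\rho\colon A_c^I\to A_c''$ is a surjective equivariant $*$-homomorphism, so setting $J\defeq \ker(\rho|_{A_c^I})$ gives a short exact sequence of $G$-algebras
$$
\xymatrix{0\ar[r] & J \ar[r] & A_c^I \ar[r]^-\rho & A_c'' \ar[r] & 0}.
$$
Applying the half-exactness hypothesis to this sequence yields exactness of
$$
\xymatrix{0\ar[r] & J\rtimes_{\mu,A_c^I} G \ar[r] & A_c^I\rtimes_{\mu} G \ar[r]^-{\rho\rtimes G} & A_c''\rtimes_{\mu} G \ar[r] & 0}.
$$

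Now write $\Phi\colon A_c^I\rtimes_\mu G \to (A\rtimes_\mu G)''$ for the $*$-homomorphism supplied by Lemma \ref{prod cont}; by construction it agrees with $(\iota''\circ\rho)\rtimes u$ on $C_c(G,A_c^I)$. For any $f\in C_c(G,J)$ we have $\rho(f(g))=0$ for all $g\in G$, hence $\Phi(f)=0$. Since $C_c(G,J)$ is (by definition) norm-dense in $J\rtimes_{\mu,A_c^I}G$, the homomorphism $\Phi$ vanishes on $J\rtimes_{\mu,A_c^I} G$. By exactness of the displayed sequence above, $\Phi$ therefore factors through the quotient, yielding a well-defined $*$-homomorphism
$$
\widetilde{\Phi}\colon A_c''\rtimes_{\mu} G \to (A\rtimes_{\mu} G)''.
$$

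It remains to verify that $\widetilde{\Phi}$ extends $\iota''\rtimes u$ on $C_c(G,A_c'')$. By linearity and continuity it suffices to check this on elementary tensors $\phi\cdot a$ with $\phi\in C_c(G)$ and $a\in A_c''$, which span a dense subspace. Using Lemma \ref{limit surj}, choose $\tilde a\in A_c^I$ with $\rho(\tilde a)=a$; then $\phi\cdot\tilde a\in C_c(G,A_c^I)$ is a preimage of $\phi\cdot a$ under $\rho\rtimes G$, and
$$
\widetilde{\Phi}(\phi\cdot a)=\Phi(\phi\cdot\tilde a)=\int_G\phi(g)\,\iota''(\rho(\tilde a))\,u_g\,dg=\int_G\phi(g)\,\iota''(a)\,u_g\,dg=(\iota''\rtimes u)(\phi\cdot a).
$$

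The only real content is in invoking half-exactness at exactly the right spot; the rest is a straightforward diagram chase. The main obstacle—such as it is—lies in having prepared the machinery in Lemmas \ref{limit surj} and \ref{prod cont}, so that $A_c^I$ can serve as a $G$-continuous preimage of $A_c''$ admitting a map into $(A\rtimes_\mu G)''$, after which half-exactness mechanically transfers this map down to $A_c''\rtimes_\mu G$.
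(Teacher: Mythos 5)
Your argument is correct and follows essentially the same route as the paper's proof: form the short exact sequence $0\to J\to A_c^I\to A_c''\to 0$ with $J=\ker\rho$, observe that the homomorphism from Lemma \ref{prod cont} kills $C_c(G,J)$ and hence the ideal $J\rtimes_{\mu,A_c^I}G$, and use half-exactness to identify the resulting quotient with $A_c''\rtimes_\mu G$. The explicit check that the descended map agrees with $\iota''\rtimes u$ on $C_c(G,A_c'')$ is a detail the paper leaves implicit, and you handle it correctly.
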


\begin{proof}
Write $J$ for the kernel of the $*$-homomorphism $\rho:A^I_c\to A''_c$.  Lemma \ref{limit surj} says that $\rho$ is surjective, so we have a short exact sequence 
$$
\xymatrix{ 0 \ar[r] & J \ar[r] & A^I_c \ar[r]^-\rho & A''_c \ar[r] & 0 }.
$$
Note that the $*$-homomorphism $A^I_c\rtimes_{\mu} G\to (A\rtimes_{\mu} G)''$ of Lemma \ref{prod cont} 
contains $C_c(G,J)$ in its kernel, and so in the notation of Definition \ref{complete in} it induces a $*$-homomorphism
$$
\frac{A^I_c\rtimes_{\mu} G}{J\rtimes_{{\mu},A^I_c} G}\to (A\rtimes_{\mu} G)''.
$$
Since $\rtimes_\mu$ is half-exact,
this translates to a $*$-homomorphism 
$$
(A^I_c/J)\rtimes_{\mu} G\to (A\rtimes_{\mu} G)''
$$
and using the canonical isomorphism $A^I_c/J=A''_c$ this gives the desired homomorphism 
$$
A''_c\rtimes_{\mu} G\to (A\rtimes_{\mu} G)'',
$$
so we are done.
\end{proof}

Look now at the special case of Definition \ref{conv} where $\iota\rtimes u: A\rtimes_\mu G\to \B(H)$ is the universal 
representation of $A\rtimes_\mu G$.  Then $(A\rtimes_\mu G)''$ is the enveloping von Neumann algebra
of $A\rtimes_\mu G$, which identifies with the double dual $(A\rtimes_\mu G)^{**}$.

Let $\iota^{**}:A^{**}\to (A\rtimes_\mu G)^{**}$ denote 
the  normal extension of the representation $\iota:A\to (A\rtimes_\mu G)^{**}\subseteq \B(H)$, and abusing notation, use $\iota^{**}$ also for the restriction of this map to the continuous part $A^{**}_c$.
Then $(\iota^{**}, u)$ is a covariant representation of the $C^*$-dynamical system
$(A_c^{**}, G)$ into $(A\rtimes_\mu G)^{**}$ and therefore integrates to a $*$-homomorphism
\begin{equation}\label{eq-A**}
\iota^{**}\rtimes u: C_c(G, A^{**}_c)\to (A\rtimes_\mu G)^{**}.
\end{equation}
Notice that the map $\iota^{**}:A^{**}\to (A\rtimes_\mu G)^{**}$ is injective: indeed, if $H_A$ is the universal representation for $A$, then we have a sequence of canonical maps
$$
\xymatrix{ A^{**}_c \ar[r]^-{\iota^{**}} & (A\rtimes_\mu G)^{**} \ar[r] & (A\rtimes_r G)^{**} \ar[r] & \B(H_A\otimes L^2(G)) },
$$
where the last map is the normal extension of the regular representation associated to the universal representation of $A$, and whose composition is easily seen to be injective.  A similar reasoning shows that the integrated form \eqref{eq-A**} is injective, but we shall not use this fact.
Thus $\iota^{**}$ is a special case of the map $\iota''$ from Definition \ref{conv}.

\begin{theorem}\label{thm-exact}
Let $\rtimes_\mu$ be a crossed-product functor for the locally compact group $G$. Then the following are equivalent:
\begin{enumerate}
\item $\rtimes_\mu$ is half-exact;
\item for every $G$-algebra $A$ and every faithful representation $\iota\rtimes \mu:A\rtimes_\mu G\to \B(H)$, with notation as in Definition \ref{conv} we have a 
$*$-homomorphism 
$$\iota''\rtimes u: A_c''\rtimes_\mu G\to (A\rtimes_\mu G)'';$$
\item for every $G$-algebra $A$, the map of line (\ref{eq-A**}) extends to a $*$-homomorphism
$$\iota^{**}\rtimes u: A_c^{**}\rtimes_\mu G\to (A\rtimes_\mu G)^{**};$$
\item $\rtimes_\mu$ is exact.
\end{enumerate}
\end{theorem}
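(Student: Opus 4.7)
My plan is to establish the cycle $(1)\Rightarrow(2)\Rightarrow(3)\Rightarrow(4)\Rightarrow(1)$.  Three of these implications require almost no work: $(1)\Rightarrow(2)$ is the content of Lemma \ref{mats lem}, $(2)\Rightarrow(3)$ is the specialization of $(2)$ to the universal representation of $A\rtimes_\mu G$ (where the injectivity of $\iota^{**}:A^{**}\to (A\rtimes_\mu G)^{**}$ discussed just before the theorem identifies $A''_c$ with $A^{**}_c$), and $(4)\Rightarrow(1)$ is tautological.  All the real content lies in $(3)\Rightarrow(4)$, for which I would adapt Matsumura's argument \cite{Matsumura:2012aa} for property $C'$ and the minimal tensor product.

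For $(3)\Rightarrow(4)$, fix a $G$-invariant ideal $I\subseteq A$ with quotient $\pi:A\to B\defeq A/I$.  Let $p\in A^{**}$ be the central projection with $pA^{**}=I^{**}$; $G$-invariance of $I$ forces $p$ to be $G$-fixed, so $p\in A^{**}_c$, and this produces a decomposition of $G$-algebras $A^{**}_c=I^{**}_c\oplus B^{**}_c$.  A preparatory step is to record the routine fact that any crossed-product functor respects finite direct sums, which uses only functoriality applied to the summand inclusions and projections together with the orthogonality of $C_c(G,I^{**}_c)$ and $C_c(G,B^{**}_c)$ inside $C_c(G,A^{**}_c)$; this pushes the decomposition to $A^{**}_c\rtimes_\mu G=I^{**}_c\rtimes_\mu G\oplus B^{**}_c\rtimes_\mu G$, with $\pi^{**}_c\rtimes G$ identified as the projection onto the second summand.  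For $C\in\{A,I,B\}$, write $j_C:C\rtimes_\mu G\to C^{**}_c\rtimes_\mu G$ for the map induced by the equivariant inclusion $C\hookrightarrow C^{**}_c$, and $\psi_C$ for the map provided by $(3)$.  A density argument on $C_c(G,C)$ shows $\psi_C\circ j_C$ equals the canonical (injective) embedding $C\rtimes_\mu G\hookrightarrow (C\rtimes_\mu G)^{**}$, and naturality makes the $\psi_C$'s compatible with the $*$-homomorphisms induced by $I\hookrightarrow A$ and by $\pi$.

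The ideal property then falls out: $j_I$ is injective because $\psi_I\circ j_I$ is a canonical injection, and the inclusion $I^{**}_c\rtimes_\mu G\hookrightarrow A^{**}_c\rtimes_\mu G$ is split by the direct-sum structure and therefore injective, so the composition $I\rtimes_\mu G\to A\rtimes_\mu G\xrightarrow{j_A}A^{**}_c\rtimes_\mu G$ is injective, which forces injectivity of $I\rtimes_\mu G\to A\rtimes_\mu G$.  For half-exactness at $A\rtimes_\mu G$, take $x\in A\rtimes_\mu G$ with $(\pi\rtimes G)(x)=0$; naturality gives $(\pi^{**}_c\rtimes G)(j_A(x))=0$, placing $j_A(x)$ in the first summand $I^{**}_c\rtimes_\mu G$.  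Applying $\psi_A$ recovers $x=\psi_A(j_A(x))\in\psi_A(I^{**}_c\rtimes_\mu G)$; by naturality this image lies inside the weak*-closure of $I\rtimes_{\mu,A}G$ in $(A\rtimes_\mu G)^{**}$, which meets the $C^*$-algebra $A\rtimes_\mu G$ precisely in the norm-closed subspace $I\rtimes_{\mu,A}G$.  Hence $x\in I\rtimes_{\mu,A}G$, as required.  The main obstacle is this final step: combining the functorial direct-sum observation at the crossed-product level with the weak*-closure-to-norm-closure descent in the bidual, so that an element detected only inside $(A\rtimes_\mu G)^{**}$ is actually pinned down inside $I\rtimes_{\mu,A}G$.
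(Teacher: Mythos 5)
Your proposal is correct and follows essentially the same route as the paper: the same easy implications, and for $(3)\Rightarrow(4)$ the same use of the central-projection splitting $A^{**}_c=I^{**}_c\oplus(A/I)^{**}_c$, the resulting split injectivity at the crossed-product level, and the fact that the weak\textsuperscript{*}-closure of the ideal $I\rtimes_{\mu,A}G$ in $(A\rtimes_\mu G)^{**}$ meets $A\rtimes_\mu G$ in exactly $I\rtimes_{\mu,A}G$ (the paper cites \cite{Brown:2008qy}*{Lemma~9.2.6} for this last step, which you assert directly). The only cosmetic difference is that the paper runs the argument for a general extension $0\to J\to C\to A\to 0$ via a three-row diagram, whereas you work directly with an ideal $I\subseteq A$ and its quotient.
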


\begin{proof}
The implication (1) $\Rightarrow$ (2) follows from Lemma \ref{mats lem}, and (3) is a special case of (2).  

Suppose now that (3) holds and let $0\to J\to C\to A\to 0$ be any short exact sequence of $G$-algebras.
We need to show that this sequence descends to a short exact sequence
$$\xymatrix{ 0\ar[r] & J\rtimes_\mu G \ar[r] &  C\rtimes_\mu G\ar[r] & A\rtimes_\mu G\ar[r] &  0}.$$
It is clear that the map $C\rtimes_\mu G\to A\rtimes_\mu G$ is surjective. 
Consider the commutative diagram
\begin{equation}\label{eq-diag}
\begin{CD}
0 @>>>  J\rtimes_\mu G  @>>> C\rtimes_\mu G  @>>> A\rtimes_\mu G @>>> 0\\
@. @VVV @VVV @VVV \\
0 @>>> J_c^{**}\rtimes_\mu G @>>> C_c^{**}\rtimes_\mu G  @>>> A_c^{**}\rtimes_\mu G @>>> 0\\
@. @VVV @VVV @VVV \\
0 @>>>  (J\rtimes_\mu G)^{**}  @>>> (C\rtimes_\mu G)^{**}  @>>> (A\rtimes_\mu G)^{**} @>>> 0
\end{CD}
\end{equation}
where the top three vertical arrows are induced by functoriality of $\rtimes_\mu$, and the bottom three vertical arrows are as in assumption (3); note that the vertical compositions are just the canonical inclusions of each algebra into its double dual.
Since the inclusion $J\rtimes_\mu G\to (J\rtimes_\mu G)^{**}$ is injective (and similarly for $C$ and $A$), it follows that 
the upper vertical arrows are all injective.
Since $C_c^{**}$ decomposes as the direct sum of the $G$-algebras $J_c^{**}$ and $(C/J)_c^{**}=A_c^{**}$, it follows 
that the map $J_c^{**}\rtimes_\mu G \to  C_c^{**}\rtimes_\mu G$ is split injective. Hence the upper left square 
$$
\begin{CD}
J\rtimes_\mu G  @>>> C\rtimes_\mu G\\
@VVV @VVV \\
 J_c^{**}\rtimes_\mu G @>>> C_c^{**}\rtimes_\mu G  
 \end{CD}
 $$
 of diagram (\ref{eq-diag}) implies injectivity of $J\rtimes_\mu G  \to  C\rtimes_\mu G$. 
 
 Suppose now that $x\in C\rtimes_\mu G$ goes to $0$ in $A\rtimes_\mu G$. 
 Then its image in $C_c^{**}\rtimes_\mu G$ is mapped to $0\in A_c^{**}\rtimes_\mu G$, and hence must lie in 
 $ J_c^{**}\rtimes_\mu G$ by exactness of the middle horizontal sequence. Therefore, 
 $x$  lies in the intersection $ (J\rtimes_\mu G)^{**} \cap C\rtimes_\mu G$ inside  $(C\rtimes_\mu G)^{**}$.
 By \cite[Lemma 9.2.6]{Brown:2008qy} this intersection equals $J\rtimes_\mu G$, and we are done.
 
 The implication (4) $\Rightarrow$ (1) is trivial.
\end{proof}

\begin{remark}\label{ab rem}
Property (3) in Theorem \ref{thm-exact} is a direct analogue of \emph{property $C'$} of Archbold and Batty \cite[Definition 2.2]{Brown:2008qy}.  Hence the equivalence of (3) and (4) is an analogue for crossed-product functors of the fact that property $C'$ for a $C^*$-algebra $B$ is equivalent to exactness of the functor $A\mapsto A\otimes B$ (that is, the exactness of $B$); see  \cite[Proposition 9.2.7]{Brown:2008qy} for a proof of this.  The proof of Theorem \ref{thm-exact} above is (indirectly) inspired by the proof of the cited proposition.  We were directly inspired by work of Matsumura \cite{Matsumura:2012aa}, who proved (4) implies (3) for the special case when $G$ is discrete and $\rtimes_\mu$ is the reduced crossed product.
\end{remark}

\begin{remark}\label{sier ex}
Inspection of the proof of Theorem \ref{thm-exact} shows that if the map $\iota^{**}\rtimes u:A^{**}_c\rtimes_r G \to (A\rtimes_r G)^{**}$ exists, then any short exact sequence of the form $0\to I\to A \to B\to 0$ descends to a short exact sequence
$$
\xymatrix{ 0 \ar[r] & I\rtimes_r G \ar[r] & A\rtimes_r G \ar[r] & B\rtimes_r G \ar[r] & 0 },
$$
or in other words, that $A$ is an exact $G$-algebra in the sense of  \cite[Definition 1.2]{Sierakowski:2010aa}.  In particular, if $A=C_{ub}(G)$, then using Remark \ref{cub rem}, existence of the map $\iota^{**}\rtimes u:C_{ub}(G)_c^{**}\rtimes_r G \to (C_{ub}(G)\rtimes_r G)^{**}$ is equivalent to exactness of $G$.
\end{remark}

As an immediate corollary of this discussion and Theorem \ref{thm-exact}, we get the following characterization of exact groups.

\begin{corollary}\label{exact group}
Let $G$ be a locally compact group. Then the following are equivalent:
\begin{enumerate}
\item for every $G$-algebra $A$ and every faithful representation $\iota\rtimes \mu:A\rtimes_r G\to \B(H)$, with notation as in Definition \ref{conv} we have a 
$*$-homomorphism 
$$\iota''\rtimes u: A_c''\rtimes_r G\to (A\rtimes_r G)'';$$
\item for every $G$-algebra $A$, the map of line (\ref{eq-A**}) extends to a $*$-homomorphism
$$\iota^{**}\rtimes u: A_c^{**}\rtimes_r G\to (A\rtimes_r G)^{**};$$
\item for the $G$-algebra $A=C_{ub}(G)$, the map of line (\ref{eq-A**}) extends to a $*$-homomorphism
$$\iota^{**}\rtimes u: C_{ub}(G)_c^{**}\rtimes_r G\to (C_{ub}(G)\rtimes_r G)^{**};$$
\item $G$ is exact. \qed
\end{enumerate}
\end{corollary}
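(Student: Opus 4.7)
The plan is to obtain this corollary as a direct specialization of Theorem~\ref{thm-exact}, combined with the Brodzki--Cave--Li characterization of exactness recalled in Remark~\ref{cub rem}. Taking $\rtimes_\mu = \rtimes_r$ in Theorem~\ref{thm-exact}, items (1), (2), and (4) of the corollary coincide respectively with items (2), (3), and (4) of that theorem (using that, by definition, $G$ is exact precisely when $\rtimes_r$ is an exact functor). Their equivalence is therefore immediate. The implication (2)~$\Rightarrow$~(3) is trivial, since (3) is just (2) restricted to the single $G$-algebra $A = C_{ub}(G)$.

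The only substantive work is the converse (3)~$\Rightarrow$~(4). Here I would appeal to Remark~\ref{sier ex}, which records that the diagram-chase argument of the implication (3)~$\Rightarrow$~(4) in Theorem~\ref{thm-exact} localizes: the existence of the lifting $\iota^{**}\rtimes u$ on the middle term of a single short exact sequence already forces that sequence to descend to an exact sequence of reduced crossed products. Applied to the middle term $A = C_{ub}(G)$ and to the canonical equivariant short exact sequence
$$
0 \to C_0(G) \to C_{ub}(G) \to C(\partial G) \to 0,
$$
hypothesis (3) then yields exactness of
$$
0 \to C_0(G) \rtimes_r G \to C_{ub}(G) \rtimes_r G \to C(\partial G) \rtimes_r G \to 0.
$$
By Remark~\ref{cub rem}, exactness of precisely this single sequence is equivalent to exactness of $G$, and (4) follows.

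The main obstacle is essentially a bookkeeping check: one must verify the localization claim underlying Remark~\ref{sier ex}, namely that the diagram chase in the proof of Theorem~\ref{thm-exact} only requires the existence of the lifting map on the middle term and not on the ideal and quotient as well. This should follow from the central decomposition $C_{ub}(G)^{**} = C_0(G)^{**} \oplus C(\partial G)^{**}$ restricting compatibly to continuous parts, so that the ideal- and quotient-level lifting maps needed in the diagram are induced automatically from the middle one. Once this routine verification is in hand, the argument above goes through without further difficulty, and no new $C^*$-algebraic input beyond what Theorem~\ref{thm-exact} and Remark~\ref{cub rem} already provide is needed.
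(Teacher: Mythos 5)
Your proposal is correct and follows essentially the same route as the paper: the corollary is stated there with a \qed as "immediate from this discussion and Theorem \ref{thm-exact}," where "this discussion" is precisely Remark \ref{sier ex} (the localization of the diagram chase to a single middle term, which works because the middle row of diagram \eqref{eq-diag} is exact by the central decomposition $C^{**}_c = J^{**}_c \oplus A^{**}_c$, and because $\rtimes_r$ has the ideal property) together with Remark \ref{cub rem} (the Brodzki--Cave--Li criterion). Your identification of corollary items (1), (2), (4) with theorem items (2), (3), (4), the trivial specialization for (2)~$\Rightarrow$~(3), and the use of the sequence $0\to \contz(G)\to \contub(G)\to \cont(\partial G)\to 0$ for (3)~$\Rightarrow$~(4) all match the intended argument.
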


Finally, we have the following immediate corollary of Theorem \ref{thm-exact}, Proposition \ref{quot lem}, and Proposition \ref{min ex}.

\begin{corollary}\label{min exact cor}
For a given crossed product $\rtimes_\mu$, $\rtimes_\emu$ is the minimal exact crossed-product functor that dominates $\rtimes_\mu$.  In particular, $\rtimes_{\mathcal{E}(r)}$ is the minimal exact crossed-product functor among all crossed-product functors for $G$.\qed
\end{corollary}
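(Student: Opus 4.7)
The plan is to simply assemble the three cited ingredients. By Proposition \ref{quot lem}, the functor $\rtimes_\emu$ is half-exact; then Theorem \ref{thm-exact} (the implication (1) $\Rightarrow$ (4)) upgrades this to the conclusion that $\rtimes_\emu$ is in fact exact. At the same time, from the inequalities in \eqref{norm sandwich}, $\rtimes_\emu$ dominates $\rtimes_\mu$, so $\rtimes_\emu$ is at least one exact crossed-product functor dominating $\rtimes_\mu$.

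For minimality, I would invoke Proposition \ref{min ex}, which says $\rtimes_\emu$ is minimal among all \emph{half-exact} crossed-product functors dominating $\rtimes_\mu$. Since every exact crossed-product functor is trivially half-exact, this automatically gives minimality amongst exact functors dominating $\rtimes_\mu$ as well.

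For the second assertion, I would specialize to $\rtimes_\mu = \rtimes_r$. Since the reduced norm is dominated by every $C^*$-norm satisfying the axioms of a crossed-product functor (again by \eqref{norm sandwich}), an exact crossed-product functor $\rtimes_\nu$ in the abstract sense automatically dominates $\rtimes_r$; thus the first part applied to $\mu=r$ yields that $\rtimes_{\mathcal E(r)}$ sits below every exact crossed-product functor for $G$, as claimed.

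There is no real obstacle here: the entire content has already been established in the cited results, and the corollary is just the formal combination, using only the trivial observation that exact implies half-exact. No further computations or diagram chases are needed beyond those already performed in Sections \ref{sec-halfexact} and \ref{ex sec}.
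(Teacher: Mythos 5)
Your proposal is correct and is precisely the paper's intended argument: the corollary is stated there as an immediate consequence of Theorem \ref{thm-exact}, Proposition \ref{quot lem}, and Proposition \ref{min ex}, combined exactly as you do via the observation that exact implies half-exact and that every crossed-product functor dominates $\rtimes_r$ by the inequalities in \eqref{norm sandwich}. No differences to report.
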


\section{Morita compatibility}\label{sec-mor}

Throughout this section, $G$ denotes a locally compact group, and $\rtimes_\mu$ a crossed product functor.  As before, the reader is encouraged to assume that $\rtimes_\mu=\rtimes_r$, which is the most important special case, but the general case causes no extra difficulties.

Our goal is to show that \emph{Morita compatibility} as defined in \cite[Definition 3.3]{Baum:2013kx} (see also Definition \ref{mor com u} below) passes from $\rtimes_\mu$ to $\rtimes_\emu$, as long as the input $\rtimes_\mu$ has the ideal property.  In particular, $\rtimes_{\mathcal{E}(r)}$ is Morita compatible.  From this, it follows readily that $\rtimes_{\mathcal{E}(r)}$ agrees with the \emph{minimal exact Morita compatible functor} $\rtimes_{\mathcal{E}}$  of \cite[Theorem 3.13]{Baum:2013kx} on the category of all $G$-algebras, and, if $G$ is second countable, with the \emph{minimal exact correspondence functor} $\rtimes_{\mathcal{E}_{\mathfrak{Corr}}}$ of \cite[Corollary 8.8]{Buss:2014aa} on the category of separable $G$-algebras (\cite[Corollary 8.13]{Buss:2014aa}).   

To state the definition of Morita compatibility, we need some notation.  Let $H$ be a Hilbert space equipped with a $G$-action $u$, and let $\Ad u$ denote the induced action by conjugation on the compact operators $\mathcal{K}=\mathcal{K}(H)$.  Let $A$ be a $G$-algebra, and equip $A\otimes \mathcal{K}$ with the tensor product action.  Consider the $*$-homomorphism defined on the level of pre-completed crossed products and algebraic tensor products by the formula 
$$
\Psi_{\alg}:C_{c}(G,A)\odot \mathcal{K}\to C_{c}(G,A\otimes \mathcal{K}),\quad \Psi(a\otimes k)(g):= a(g)\otimes ku_g^*.
$$
Completing to the maximal crossed products and spatial tensor product, we get a $*$-homomorphism
\begin{equation}\label{psi def}
\Psi_{\max}:(A\rtimes_{\max}G) \otimes \mathcal{K}\to (A\otimes \mathcal{K})\rtimes_{\max}G.
\end{equation}
which is well-known to be a $*$-isomorphism.  An explicit inverse to $\Psi_{\max}$ is constructed as follows.  Consider the $*$-homomorphism
$$
\pi:A\otimes \mathcal{K}\to \M((A\rtimes_{\max}G) \otimes \mathcal{K});\quad a\otimes k \mapsto {\iota(a)}\otimes k,
$$
where $\iota:A\to \M(A\rtimes_{\max}G)$ denotes the canonical inclusion,
and the unitary representation 
$$
v:G \to \M((A\rtimes_{\max}G) \otimes \mathcal{K});\quad g\mapsto \delta_g\otimes u_g.
$$
Then the pair $(\pi,v)$ is readily checked to be covariant.  The integrated form gives a $*$-homomorphism 
\begin{equation}\label{phi def}
\Phi_{\max}:(A\otimes \mathcal{K})\rtimes_{\max}G\to (A\rtimes_{\max}G) \otimes \mathcal{K},
\end{equation}
which one can check is the inverse to $\Psi_{\max}$.  

Now, let $\rtimes_\mu$ be an arbitrary crossed-product functor.  Then postcomposing $\Phi_{\max}$ and $\Psi_{\max}$ with the canonical quotient maps from maximal to $\mu$-crossed products gives $*$-homomorphisms
\begin{equation}\label{phi n psi}
\begin{split}
&\Psi_{\max,\mu}:(A\rtimes_{\max}G) \otimes \mathcal{K}\to (A\otimes \mathcal{K})\rtimes_{\mu}G  \\ & \Phi_{\max,\mu}:(A\otimes \mathcal{K})\rtimes_{\max}G\to (A\rtimes_{\mu}G)\otimes \mathcal{K}.
\end{split}
\end{equation}

\begin{definition}\label{mor com u}
Let $\rtimes_\mu$ be a crossed-product functor, and let $H$ be a Hilbert space equipped with a $G$-action $u$.  The functor $\rtimes_\mu$ is \emph{$u$-Morita compatible} if for any $G$-algebra $A$ the $*$-homomorphism 
$$
\Psi_{\max,\mu}:(A\rtimes_{\max}G) \otimes \mathcal{K}\to (A\otimes \mathcal{K})\rtimes_{\mu}G.
$$
from line \eqref{phi n psi} above descends to a $*$-isomorphism
$$
\Psi_\mu:(A\rtimes_{\mu}G) \otimes \mathcal{K}\to (A\otimes \mathcal{K})\rtimes_{\mu}G
$$
Following \cite[Definition 3.3]{Baum:2013kx}, $\rtimes_\mu$ is \emph{Morita compatible} if it is $u$-Morita compatible with $u$ the tensor product of the left regular and trivial representations on $L^2(G)\otimes \ell^2(\N)$.
\end{definition}

Note that the maximal and reduced crossed products are $u$-Morita compatible for any $u$.

\begin{remark}\label{mor com sep}
One can separate checking $u$-Morita compatibility into two questions as follows.
\begin{enumerate}[(i)]
\item Does $\Psi_{\max,\mu}$ descend to a $*$-homomorphism 
$$
\Psi_\mu:(A\rtimes_{\mu}G) \otimes \mathcal{K}\to (A\otimes \mathcal{K})\rtimes_{\mu}G~?
$$
\item Does $\Phi_{\max,\mu}$ descend to a $*$-homomorphism
$$
\Phi_\mu:(A\otimes \mathcal{K})\rtimes_{\mu}G\to (A\rtimes_{\mu}G) \otimes \mathcal{K}~?
$$  
\end{enumerate}
The crossed product $\rtimes_\mu$ is $u$-Morita compatible if and only if the answer to both of these questions is `yes', and in that case the descended $*$\nb-homomorphisms $\Phi_\mu$ and $\Psi_\mu$ will automatically be mutually inverse (as they are mutually inverse on dense subalgebras).  
\end{remark}

One can at least always answer question (i) positively in the presence of the ideal property: recall this means that $\rtimes_\mu$ takes an equivariant inclusion $I\subseteq A$ of an ideal to an injective map $I\rtimes_\mu G \to A\rtimes_\mu G$.

\begin{lemma}\label{always ext}
Let $H$ be a Hilbert space equipped with a $G$-action $u$, let $A$ be a $G$-algebra, and let $\rtimes_\mu$ be a crossed product with the ideal property.  Then the $*$-homomorphism 
$$
\Psi_{\max,\mu}:(A\rtimes_{\max}G) \otimes \mathcal{K}\to (A\otimes \mathcal{K})\rtimes_{\mu}G
$$
of line \eqref{phi n psi} above descends to a $*$-homomorphism
$$
\Psi_\mu:(A\rtimes_{\mu}G)\otimes \mathcal{K}\to (A\otimes \mathcal K)\rtimes_{\mu}G.
$$
\end{lemma}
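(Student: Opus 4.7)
The plan is to construct $\Psi_\mu$ directly as a $*$\nb-homomorphism and verify that it agrees with $\Psi_{\alg}$ on the algebraic tensor product $C_c(G,A)\odot\K$; this forces $\Psi_{\max,\mu}$ to descend through $q_A\otimes\id_\K$, as required. I build $\Psi_\mu$ as the integrated form of two commuting $*$\nb-homomorphisms into the multiplier algebra $\M((A\otimes\K)\rtimes_\mu G)$: (i) the $*$\nb-homomorphism $\iota_\K:\K\to \M(A\otimes\K)\hookrightarrow \M((A\otimes\K)\rtimes_\mu G)$ sending $k\mapsto 1_{\M(A)}\otimes k$; and (ii) a $*$\nb-homomorphism $\tilde\pi_A:A\rtimes_\mu G\to \M((A\otimes\K)\rtimes_\mu G)$ integrating a suitable covariant pair $(\pi_A,V)$ for $(A,G)$.

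The pair $(\pi_A,V)$ is defined by $\pi_A(a):=a\otimes 1_{\M(\K)}$ and $V_g:=(1_{\M(A)}\otimes u_g^{-1})\delta_g$, both in $\M((A\otimes\K)\rtimes_\mu G)$. Direct calculation (using $\delta_g\cdot x=(\alpha\otimes\Ad u)_g(x)\delta_g$ for $x\in\M(A\otimes\K)$) confirms that $V$ is a strictly continuous unitary group homomorphism; that $(\pi_A,V)$ is a covariant pair of $(A,G)$; and that both $\pi_A(A)$ and $V_G$ commute with $\iota_\K(\K)$ (for instance, $V_g(1\otimes k)=(1\otimes u_g^{-1})(1\otimes u_gku_g^{-1})\delta_g=(1\otimes ku_g^{-1})\delta_g=(1\otimes k)V_g$). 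Moreover, a direct evaluation on an elementary tensor gives $\pi_A(a)V_g\iota_\K(k)=(a\otimes ku_g^{-1})\delta_g=\Psi_{\alg}(a\delta_g\otimes k)$, which lies in $(A\otimes\K)\rtimes_\mu G$, not merely in its multiplier algebra.

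The key step — and the only place where the ideal property enters — is showing that the integrated form of $(\pi_A,V)$, a priori defined only on $A\rtimes_{\max}G$, descends to a $*$\nb-homomorphism from $A\rtimes_\mu G$. For this I would invoke the following general principle, available for any $\rtimes_\mu$ with the ideal property: every $G$\nb-equivariant nondegenerate $*$-homomorphism $\phi:A\to \M(B)$ between $G$-algebras extends canonically to $\bar\phi:A\rtimes_\mu G\to \M(B\rtimes_\mu G)$. One establishes this by forming the $G$-algebra $D:=\phi(A)+B\subseteq \M(B)$ (which is $G$\nb-continuous because generated by $G$\nb-continuous elements), observing that $B\triangleleft D$ is a $G$\nb-invariant ideal so the ideal property provides $B\rtimes_\mu G\hookrightarrow D\rtimes_\mu G$ as an ideal, applying functoriality of $\rtimes_\mu$ to the $G$-equivariant inclusion $A\to D$, and noting that the image of $A\rtimes_\mu G$ in $D\rtimes_\mu G$ normalises the ideal $B\rtimes_\mu G$, hence factors through $\M(B\rtimes_\mu G)$. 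Applied to the inclusion $A\hookrightarrow \M(A\otimes\K)$, $a\mapsto a\otimes 1$ (for which the resulting $D$ is a copy of $A\otimes\K^+$ inside $\M(A\otimes\K)$), this delivers the desired $\tilde\pi_A$.

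By nuclearity of $\K$, the two commuting $*$\nb-homomorphisms $\tilde\pi_A$ and $\iota_\K$ then assemble into a $*$\nb-homomorphism $(A\rtimes_\mu G)\otimes\K\to \M((A\otimes\K)\rtimes_\mu G)$; by the calculation on elementary tensors recorded above, its image in fact lies in $(A\otimes\K)\rtimes_\mu G$ and matches $\Psi_{\alg}$ on the dense subspace $C_c(G,A)\odot\K$, producing the required $\Psi_\mu$. The main obstacle is the multiplier-extension principle in the preceding paragraph, which is the substantive use of the ideal property; once it is in place, the rest reduces to direct verification in the multiplier algebra.
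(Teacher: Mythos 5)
Your construction correctly identifies the covariant pair that is compatible with $\Psi_{\alg}$: the twisted unitaries $V_g=(1\otimes u_g^*)\delta_g$ are exactly what is needed so that $\pi_A(a)V_g\iota_\K(k)=(a\otimes ku_g^*)\delta_g=\Psi_{\alg}(a\delta_g\otimes k)$. The gap is in the key step. The multiplier-extension principle you invoke (this is \cite[Lemma 3.3]{Buss:2014aa}, and it is indeed valid for functors with the ideal property) produces the descent of the integrated form of the covariant pair $(\phi,\,g\mapsto\delta_g)$ built from the \emph{canonical} unitaries of $\M(B\rtimes_\mu G)$: in the argument via $D=\phi(A)+B$, the canonical unitaries of $\M(D\rtimes_\mu G)$ restrict to the canonical unitaries of the ideal $B\rtimes_\mu G$, and that is all this argument can see. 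It says nothing about your pair $(\pi_A,V)$, whose unitary part is twisted by the cocycle $g\mapsto 1\otimes u_g^*$. That cocycle implements an exterior equivalence between $\alpha\otimes\Ad u$ and $\alpha\otimes\id_{\K}$, and asking whether the associated isomorphism of maximal crossed products descends to the $\mu$-level is essentially the statement you are trying to prove; so the appeal to the general principle is circular at exactly the point where the ideal property was supposed to do the work.

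In fact the lemma as stated is false, as recorded in the erratum appended to the paper. Take $\rtimes_\mu$ to be the Brown--Guentner functor associated to weak containment in $\lambda_G$ (which has the ideal property), and take $A=\C$, $H=L^2(G)$, $u=\lambda_G$. By Fell's trick every covariant representation of $(\K(L^2(G)),G,\Ad\lambda_G)$ has unitary part weakly contained in $\lambda_G$, so $(\C\otimes\K)\rtimes_\mu G=\K(L^2(G))\rtimes_{\max}G\cong C^*_{\max}(G)\otimes\K(L^2(G))$, while $(\C\rtimes_\mu G)\otimes\K=C^*_r(G)\otimes\K(L^2(G))$; no $*$-homomorphism from the latter to the former can extend $\Psi_{\alg}$ when $G$ is not amenable. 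One can see your $\tilde\pi_A$ failing concretely here: under $\Phi_{\max}$ one has $\delta_g\mapsto \delta_g^{\max}\otimes\lambda_g$ and hence $V_g\mapsto \delta_g^{\max}\otimes 1$ (where $\delta_g^{\max}$ denotes the canonical unitaries of $\M(C^*_{\max}(G))$), so the integrated form of $(\pi_A,V)$ carries the maximal norm on $C_c(G)$ and does not factor through $\C\rtimes_\mu G=C^*_r(G)$. For comparison, the paper's published proof makes the complementary mistake: it integrates the untwisted pair $(a\mapsto a\otimes 1,\,g\mapsto\delta_g)$, for which the descent \emph{does} exist by the ideal property, but the resulting map on the tensor product then fails to agree with $\Psi_{\alg}$ on generators (one gets $(a\otimes u_gku_g^*)\delta_g$ rather than $(a\otimes ku_g^*)\delta_g$), so it is not the required $\Psi_\mu$.
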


\begin{proof}
We have an equivariant $*$-homomorphism 
$$
A\to \M((A\otimes \mathcal{K})\rtimes_\mu G);\quad a\mapsto a\otimes 1
$$
and a unitary representation 
$$
G\to \M((A\otimes \mathcal{K})\rtimes_\mu G); \quad g\mapsto \delta_g.
$$
These form a covariant pair for $(A,G)$, which integrates to a $*$-homomorphism
$$
C_c(G,A)\to \M((A\otimes \mathcal{K})\rtimes_\mu G).
$$
As $\rtimes_\mu$ has the ideal property, \cite[Lemma 3.3]{Buss:2014aa} implies that the integrated form extends to a $*$-homomorphism
$$
\Psi_A:A\rtimes_{\mu}G\to \M((A\otimes \mathcal{K})\rtimes_{\mu} G).
$$
On the other hand, we have a $*$-homomorphism 
$$
\mathcal{K}\to \M((A\rtimes_{\max}G) \otimes \mathcal{K});\quad k\mapsto 1\otimes k.
$$
Postcomposing this with the map induced on multipliers by $\Psi_{\max}$ gives a $*$-homomorphism
$$
\mathcal{K}\to \M((A\otimes \mathcal{K})\rtimes_{\max}G)
$$
and postcomposing again with the map on multipliers induced by the canonical quotient map $(A\otimes \mathcal{K})\rtimes_{\max}G\to (A\otimes \mathcal{K})\rtimes_{\mu}G$ gives a $*$-homomorphism
$$
\Psi_{\mathcal{K}}:\mathcal{K}\to \M((A\otimes \mathcal{K})\rtimes_{\mu}G).
$$
The images of $\Psi_A$ and $\Psi_{\mathcal{K}}$ commute, so they combine to give a $*$-homomorphism 
$$
\Psi_A\odot \Psi_{\mathcal{K}}:(A\rtimes_{\mu}G)\odot \mathcal{K}\to \M((A\otimes \mathcal{K})\rtimes_{\mu}G)
$$
on the algebraic tensor product.  Checking on generators, this $*$-homomorphism actually takes image in $(A\otimes \mathcal{K})\rtimes_{\mu}G$, not just in the multiplier algebra.  It moreover extends to the spatial tensor product by nuclearity of $\mathcal{K}$, giving us
$$
\Psi_\mu:(A\rtimes_{\mu}G)\otimes \mathcal{K}\to (A\otimes \mathcal{K})\rtimes_{\mu}G,
$$
and one checks on generators that this is exactly the desired map.
\end{proof}

\begin{proposition}\label{q mor com}
Fix a unitary $G$-representation $u$, and let $\rtimes_\mu$ be a $u$-Morita compatible crossed product with the ideal property.  Then the crossed product $\rtimes_\emu$ is $u$-Morita compatible. In particular, if $\rtimes_\mu$ has the ideal property and is Morita compatible, then so is $\rtimes_\emu$.
\end{proposition}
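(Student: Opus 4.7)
The plan is to verify the two conditions of Remark \ref{mor com sep} for $\rtimes_\emu$, of which (i) is essentially automatic and (ii) is the main content. For (i): Corollary \ref{min exact cor} shows that $\rtimes_\emu$ is exact, so by Remark \ref{half ex ex} it has the ideal property, and Lemma \ref{always ext} (applied with $\rtimes_\emu$ in place of $\rtimes_\mu$) produces the desired map $\Psi_\emu\colon (A\rtimes_\emu G)\otimes\K\to(A\otimes\K)\rtimes_\emu G$. The substance of the proof is thus to construct $\Phi_\emu\colon (A\otimes\K)\rtimes_\emu G\to (A\rtimes_\emu G)\otimes\K$.

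For (ii), the strategy is to use a single equivariant surjection that realizes the $\emu$-norm on $A$ and controls the $\emu$-norm on $A\otimes\K$. Use Proposition \ref{norm real} to choose an equivariant surjection $\pi\colon C\to A$ with $\|\cdot\|_\pi=\|\cdot\|_\emu$ on $C_c(G,A)$, and set $J:=\ker\pi$. Because $\rtimes_\mu$ has the ideal property, $J\rtimes_{\mu,C}G=J\rtimes_\mu G$, and so by construction $A\rtimes_\emu G=(C\rtimes_\mu G)/(J\rtimes_\mu G)$. Tensoring the sequence $0\to J\to C\to A\to 0$ pointwise with $(\K,\Ad u)$ yields an equivariant short exact sequence, and setting $\sigma:=\pi\otimes\id_\K$, a second application of the ideal property of $\rtimes_\mu$ gives
$$
(A\otimes\K)\rtimes_\sigma G \;=\; \frac{(C\otimes\K)\rtimes_\mu G}{(J\otimes\K)\rtimes_\mu G}.
$$

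The crux is now to identify this quotient with $(A\rtimes_\emu G)\otimes\K$. By the assumed $u$-Morita compatibility of $\rtimes_\mu$ applied to both $C$ and $J$, together with naturality of $\Phi_\mu$ in the underlying $G$-algebra, the displayed short exact sequence is carried by $\Phi_\mu$ onto the sequence $(J\rtimes_\mu G)\otimes\K\hookrightarrow (C\rtimes_\mu G)\otimes\K\twoheadrightarrow (A\rtimes_\emu G)\otimes\K$, where exactness uses nuclearity of $\K$. Hence $(A\otimes\K)\rtimes_\sigma G\cong (A\rtimes_\emu G)\otimes\K$. Since $\|\cdot\|_\sigma\leq\|\cdot\|_\emu$ on $C_c(G,A\otimes\K)$ by the defining supremum of the $\emu$-norm, this composition descends to a $*$-homomorphism $(A\otimes\K)\rtimes_\emu G\to (A\rtimes_\emu G)\otimes\K$, which by naturality of the formula defining $\Phi$ must agree with the descent of $\Phi_{\max,\emu}$ on the dense subalgebra $C_c(G,A\otimes\K)$; this is the required $\Phi_\emu$. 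The main technical obstacle I anticipate is keeping the bookkeeping clean: precisely verifying that $\Phi_\mu$ intertwines the two short exact sequences via naturality (and handling the two different roles of the ideal property, once for the $\pi$-norm on $A$ and once for $\sigma$ on $A\otimes\K$). Once that is in place, Remark \ref{mor com sep} completes the proof, and the last sentence of the proposition follows by specializing $u$ to the tensor product of the left regular and trivial representations on $L^2(G)\otimes\ell^2(\N)$.
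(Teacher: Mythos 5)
Your proposal is correct relative to the paper's own argument and takes essentially the same route: Lemma \ref{always ext} (via exactness and hence the ideal property of $\rtimes_\emu$) for $\Psi_\emu$, and for $\Phi_\emu$ the same norm-attaining surjection $\pi\colon C\to A$ from Proposition \ref{norm real}, tensored with $\K$, combined with the ideal property and the $u$-Morita compatibility of $\rtimes_\mu$ on $C$ and $J$. Be aware, though, that both your argument and the paper's rely on Lemma \ref{always ext}, which the erratum shows to be false as stated, so the proposition remains open in the form given.
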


\begin{proof}
Let $A$ be a $G$-algebra. The crossed product $\rtimes_\emu$ is exact by Corollary \ref{min exact cor}, whence in particular has the ideal property.  Hence Lemma \ref{always ext} implies that we have a $*$-homomorphism
$$
\Psi_\emu:(A\rtimes_{\emu}G)\otimes \mathcal{K}\to (A\otimes \mathcal{K})\rtimes_{\emu}G.
$$
To complete the proof, it suffices as in Remark \ref{mor com sep} to show that the $*$-homomorphism $\Phi_{\max,\emu}$ descends to a $*$-homomorphism
$$
\Phi_\emu:(A\otimes \mathcal{K})\rtimes_{\emu}G\to (A\rtimes_{\emu}G)\otimes \mathcal{K}.
$$
Let 
$$
\xymatrix{ 0 \ar[r] & I \ar[r] & C \ar[r]^-\pi & A \ar[r] & 0 }
$$
be an equivariant short exact sequence such that $A\rtimes_\pi G=A\rtimes_\emu G$ (this exists by Proposition \ref{norm real}).  Tensoring the above sequence by the $G$-algebra $\mathcal{K}$ (equipped as always with the action $\Ad u$, with the tensor products given the tensor product action) and taking crossed products gives a short exact sequence 
$$
\xymatrix{ 0 \ar[r] & (I\otimes \mathcal{K})\rtimes_{\mu}G \ar[r] & (C\otimes \mathcal{K})\rtimes_{\mu}G \ar[r] & \frac{(C\otimes \mathcal{K})\rtimes_{\mu}G}{(I\otimes \mathcal{K})\rtimes_{\mu}G} \ar[r] & 0 };
$$ 
here we have used the ideal property for $\rtimes_\mu$ to identify the closure of $C_c(G,I\otimes\mathcal{K})$ inside $(C\otimes \mathcal{K})\rtimes_{\mu}G$ with $(I\otimes \mathcal{K})\rtimes_{\mu}G$.
The definition of $(A\otimes \mathcal{K})\rtimes_{\emu}G$ implies that the identity map on $C_{c}(G,A)$ extends to a (surjective) $*$-homomorphism
\begin{equation}\label{phi 1}
(A\otimes \mathcal{K})\rtimes_{\emu}G\to \frac{(C\otimes \mathcal{K})\rtimes_{\mu}G}{(I\otimes \mathcal{K})\rtimes_{\mu}G}.
\end{equation}
On the other hand, the assumption that $\rtimes_\mu$ is $u$-Morita compatible gives isomorphisms
$$
\Phi_\mu:(C\otimes \mathcal{K})\rtimes_{\mu}G\to (C\rtimes_{\mu}G)\otimes \mathcal{K} \quad \text{and} \quad \Phi_\mu:(I\otimes \mathcal{K})\rtimes_{\mu}G\to (I\rtimes_{\mu}G)\otimes \mathcal{K}, 
$$
which together with exactness of the functor $B\mapsto B\otimes \mathcal{K}$, the definition of the $\pi$-norm, and the choice of $\pi:C\to A$, give rise to a map
\begin{equation}\label{phi 2}
\frac{(C\otimes \mathcal{K})\rtimes_{\mu}G}{(I\otimes \mathcal{K})\rtimes_{\mu}G}\to \Big(\frac{C\rtimes_{\mu}G}{I\rtimes_{\mu}G}\Big)\otimes \mathcal{K}=(A\rtimes_\pi G)\otimes \mathcal{K}=(A\rtimes_\emu G)\otimes \mathcal{K}.
\end{equation}
Composing the maps from lines \eqref{phi 1} and \eqref{phi 2} thus gives a $*$-homomorphism
$$
(A\otimes \mathcal{K})\rtimes_{\emu}G\to (A\rtimes_{\emu}G)\otimes\mathcal{K};
$$
checking on generators, one sees that this is the desired $*$-homomorphism $\Phi_\emu$, so we are done.
\end{proof}

The next corollary is immediate from \cite[Proposition 8.10]{Buss:2014aa}.

\begin{corollary}\label{str mor com}
Suppose that $\rtimes_\mu$ is a Morita compatible crossed product functor with the ideal property.
Then the restriction of $\rtimes_\emu$ to the category of $\sigma$-unital $G$-algebras is strongly Morita compatible in the sense of \cite[Definition 4.7]{Buss:2014aa}. \qed
\end{corollary}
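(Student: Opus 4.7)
The plan is to assemble the corollary from ingredients already in hand. First I would invoke Proposition \ref{q mor com}: since by hypothesis $\rtimes_\mu$ is Morita compatible and has the ideal property, that proposition immediately yields that $\rtimes_\emu$ is Morita compatible. Second, I would record that $\rtimes_\emu$ is exact by Corollary \ref{min exact cor}, and in particular satisfies the ideal property (see Remark \ref{half ex ex}). These two properties are precisely the hypotheses on a crossed-product functor required by \cite[Proposition 8.10]{Buss:2014aa} in order to upgrade Morita compatibility to strong Morita compatibility on the subcategory of $\sigma$-unital $G$-algebras.

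Concretely, the cited proposition from \cite{Buss:2014aa} takes as input a Morita compatible crossed-product functor with the ideal property and produces, for every $\sigma$-unital $G$-algebra $A$ and every equivariant Morita equivalence $A\sim_M B$ (in the sense relevant to Definition 4.7 of that paper), a canonical Morita equivalence between $A\rtimes_\emu G$ and $B\rtimes_\emu G$ compatible with the descent of the linking algebra. Since $\rtimes_\emu$ already satisfies both hypotheses, applying this proposition directly to $\rtimes_\emu$ gives strong Morita compatibility on $\sigma$-unital $G$-algebras.

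There is essentially no obstacle beyond checking that the hypotheses of \cite[Proposition 8.10]{Buss:2014aa} truly match what we have proved: Morita compatibility from Proposition \ref{q mor com}, and the ideal property from exactness via Corollary \ref{min exact cor}. Once this verification is made, the corollary is a one-line citation.
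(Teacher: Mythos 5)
Your argument is exactly the paper's: the corollary is stated as immediate from \cite[Proposition 8.10]{Buss:2014aa}, with the hypotheses of that proposition supplied by Proposition \ref{q mor com} (Morita compatibility of $\rtimes_\emu$) and Corollary \ref{min exact cor} (exactness, hence the ideal property). The only caveat, which concerns the paper rather than your reasoning, is that the erratum withdraws Lemma \ref{always ext} and hence leaves Proposition \ref{q mor com} --- and therefore this corollary --- open.
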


The following corollary is immediate from Proposition \ref{q mor com} and Corollary \ref{min exact cor} above.

\begin{corollary}\label{min ex mc}
The crossed product $\rtimes_{\mathcal{E}(r)}$ is the same as the minimal exact Morita compatible functor $\rtimes_\mathcal{E}$ of 
\cite[Theorem 3.13]{Baum:2013kx}. \qed
\end{corollary}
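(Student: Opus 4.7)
The plan is to show each functor dominates the other in the sense of norms and therefore they coincide. The ingredients are all in place: Corollary \ref{min exact cor} characterises $\rtimes_{\mathcal{E}(r)}$ as the minimal exact crossed-product functor (that is, the one with the smallest norms among all exact crossed-product functors), while Proposition \ref{q mor com} transfers Morita compatibility from a functor with the ideal property to its associated $\emu$-functor.

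First I would verify that $\rtimes_{\mathcal{E}(r)}$ itself belongs to the class of exact Morita compatible crossed-product functors. Exactness is immediate from Corollary \ref{min exact cor}. For Morita compatibility, I would apply Proposition \ref{q mor com} with $\rtimes_\mu = \rtimes_r$: the reduced crossed product is exact as a functor on ideals (that is, it has the ideal property, since $I \rtimes_r G \to A \rtimes_r G$ is injective for any $G$-invariant ideal $I \subseteq A$) and is Morita compatible in the sense of Definition \ref{mor com u}, so the proposition shows that $\rtimes_{\mathcal{E}(r)}$ is Morita compatible as well.

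Next, since $\rtimes_\mathcal{E}$ is by definition the minimal crossed-product functor in the class of exact Morita compatible functors, the membership just established gives $\|\cdot\|_{\mathcal{E}} \leq \|\cdot\|_{\mathcal{E}(r)}$ on $C_c(G,A)$ for every $G$-algebra $A$. For the reverse inequality, note that $\rtimes_\mathcal{E}$ is an exact crossed-product functor (dominating $\rtimes_r$, as every crossed-product functor does by convention). By Corollary \ref{min exact cor}, $\rtimes_{\mathcal{E}(r)}$ is the smallest such, so $\|\cdot\|_{\mathcal{E}(r)} \leq \|\cdot\|_{\mathcal{E}}$ on $C_c(G,A)$.

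Combining the two inequalities yields equality of the norms on $C_c(G,A)$ for every $G$-algebra $A$, hence the two completions, and therefore the two functors, coincide. There is no real obstacle here; the only substantive verification is the Morita compatibility of $\rtimes_{\mathcal{E}(r)}$, and that has been reduced by Proposition \ref{q mor com} to the already-known Morita compatibility and ideal property of $\rtimes_r$.
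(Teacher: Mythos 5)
Your argument is exactly the paper's own (one-line) proof, unpacked: Morita compatibility of $\rtimes_{\mathcal{E}(r)}$ via Proposition \ref{q mor com} applied to $\rtimes_r$, minimality among exact functors via Corollary \ref{min exact cor}, and then domination in both directions. Be aware only that the erratum appended to the paper reports a gap in Lemma \ref{always ext} that leaves Proposition \ref{q mor com} (and hence this corollary) open, so the caveat applies equally to your proof and to the published one.
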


The following corollary is immediate from Corollary \ref{min ex mc} and \cite[Corollary 8.14]{Buss:2014aa}.

\begin{corollary}\label{corr fun}
Let $G$ be second countable. Then, on the category of separable $G$-algebras, the crossed-product functor $\rtimes_{\mathcal{E}(r)}$ agrees with the minimal exact correspondence functor $\rtimes_{\mathcal{E}_{\mathfrak{Corr}}}$ of \cite[Corollary 8.8]{Buss:2014aa}. \qed
\end{corollary}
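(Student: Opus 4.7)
The plan is to chain two identifications that have already been put in place. By Corollary \ref{min ex mc}, the crossed-product functor $\rtimes_{\mathcal{E}(r)}$ coincides with the minimal exact Morita compatible functor $\rtimes_{\mathcal{E}}$ on the whole category of $G$\nobreakdash-algebras, and in particular after restriction to the subcategory of separable $G$\nobreakdash-algebras. Thus the only thing that remains is to identify $\rtimes_{\mathcal{E}}$ with the minimal exact correspondence functor $\rtimes_{\mathcal{E}_{\mathfrak{Corr}}}$ on separable $G$\nobreakdash-algebras, under the standing assumption that $G$ is second countable.

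This second identification is precisely the content of \cite[Corollary 8.14]{Buss:2014aa}: under second countability of $G$ and separability of the input algebras, the minimal exact Morita compatible crossed-product functor coincides with the minimal exact correspondence functor. Concatenating the two equalities $\rtimes_{\mathcal{E}(r)} = \rtimes_{\mathcal{E}} = \rtimes_{\mathcal{E}_{\mathfrak{Corr}}}$ then gives the desired conclusion on the category of separable $G$\nobreakdash-algebras.

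There is no serious obstacle in this chain; all the real content is encapsulated in the two cited results. The only conceptual subtlety, which is handled upstream in \cite{Buss:2014aa}, is that full functoriality with respect to $G$\nobreakdash-equivariant correspondences is a priori a strictly stronger property than Morita compatibility, and the passage from one to the other genuinely requires the separability and second-countability hypotheses in order to produce the necessary descent of correspondences. Once that input is granted, the present corollary is merely the composition of two equalities of functors, which is why it is marked as immediate.
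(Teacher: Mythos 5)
Your argument is exactly the paper's: the corollary is declared immediate from Corollary \ref{min ex mc} together with \cite[Corollary 8.14]{Buss:2014aa}, which is precisely the chain $\rtimes_{\mathcal{E}(r)}=\rtimes_{\mathcal{E}}=\rtimes_{\mathcal{E}_{\mathfrak{Corr}}}$ that you describe. Be aware only that, per the erratum, Corollary \ref{min ex mc} (and hence this corollary) rests on the flawed Lemma \ref{always ext} via Proposition \ref{q mor com} and is currently regarded as open; that is a defect of the paper's own argument, not of your reproduction of it.
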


Note that this answers \cite[Questions 8.5, (iv) and (v)]{Baum:2013kx}.  Indeed, question (iv) asks whether $\C\rtimes_{\mathcal{E}}G$ equals $C^*_r(G)$, and Proposition \ref{group alg} and Corollary \ref{min ex mc} imply that this is always true.  On the other hand, question (v) asks whether $\rtimes_{\mathcal{E}}$ can be a KLQ functor for non-exact $G$; as a KLQ functor is uniquely determined by what it does on $\C$ and as $\rtimes_r\neq \rtimes_{\mathcal{E}(r)}$ for a non-exact group $G$, the answer `always' to question (iv) shows that the answer to question (v) is `never'.

To finish this section, we discuss which group algebras (i.e.,\ $C^*$-algebra completions of $C_c(G)$) can appear as $\C\rtimes_\mu G$ for certain types of crossed product functors $\rtimes_\mu$.  As discussed in \cite[Section 2]{Buss:2015ty}, every group algebra can be viewed as the group \cstar{}algebra $C^*_E(G)$ associated to a $G$-invariant weak*-closed $G$-invariant subspace $E$ of the Fourier-Stietjes algebra $B(G)$ containing the Fourier algebra $A(G)$. 

The Brown-Guentner construction \cite{Brown:2011fk} (see also the discussion in \cite[Section 3]{Buss:2015ty}) shows that any such $C^*_E(G)$ arises as the group algebra associated to an exact crossed product $\rtimes_{E_{BG}}$.  On the other hand, the Kaliszewiski-Landstad-Quigg construction \cite{Kaliszewski:2012fr} (see also the discussion in \cite[Section 3]{Buss:2015ty} again) shows that if $E$ is an ideal in $B(G)$, then $C^*_E(G)$ arises as the group $C^*$-algebra of a functor $\rtimes_{E_{KLQ}}$ that is $u$-Morita compatible for any $u$, and that has the ideal property; conversely \cite[Corollary 5.7]{Buss:2014aa}\footnote{\cite[Corollary 5.7]{Buss:2014aa} is written for correspondence functors, but inspection of the proof shows that it holds in this slightly more general setting.} shows that if $C^*_E(G)$ arises as the group algebra of any functor with these properties, then $E$ must be an ideal.

However, Brown-Guentner crossed products are generally not Morita compatible, and Kaliszewiski-Landstad-Quigg crossed products are generally not exact (see the discussion in \cite[Section 4]{Buss:2015ty}).  Thus it was not previously clear which group algebras can arise as $\C\rtimes_\mu G$ for a functor that is both exact and Morita compatible; this is a natural question, as such functors seem to behave best with respect to the Baum-Connes assembly map.

\begin{corollary}\label{4.8}
Let $G$ be a locally compact group.   Then a group algebra $C^*_E(G)$ is of the form $\C\rtimes_\mu G$ for a functor $\rtimes_\mu$ that is exact and $u$-Morita compatible for any $u$ if and only if $E$ is an ideal in $B(G)$.
\end{corollary}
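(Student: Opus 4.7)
The plan is to handle the two implications separately, using results already established in the paper together with the two preceding characterizations of ideal-in-$B(G)$ subspaces via KLQ functors and Brown--Guentner functors.

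For the ``if'' direction, suppose $E\subseteq B(G)$ is an ideal containing $A(G)$. By the Kaliszewski--Landstad--Quigg construction as recalled immediately before the statement, there is then a crossed-product functor $\rtimes_{E_{\klq}}$ such that $\C\rtimes_{E_{\klq}} G = C^*_E(G)$, and such that $\rtimes_{E_{\klq}}$ is $u$-Morita compatible for every unitary $G$-representation $u$ and has the ideal property. Apply the construction of Section~\ref{sec-halfexact} to $\mu = E_{\klq}$ to produce the functor $\rtimes_{\mathcal{E}(E_{\klq})}$. By Corollary~\ref{min exact cor} this functor is exact; by Proposition~\ref{q mor com}, since $\rtimes_{E_{\klq}}$ is $u$-Morita compatible and has the ideal property, $\rtimes_{\mathcal{E}(E_{\klq})}$ is again $u$-Morita compatible for every $u$. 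Finally, Proposition~\ref{group alg} gives
\[
\C\rtimes_{\mathcal{E}(E_{\klq})} G \;=\; \C\rtimes_{E_{\klq}} G \;=\; C^*_E(G),
\]
which exhibits $C^*_E(G)$ in the desired form.

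For the ``only if'' direction, suppose $\rtimes_\mu$ is exact and $u$-Morita compatible for every $u$, and that $\C\rtimes_\mu G = C^*_E(G)$. Exactness of $\rtimes_\mu$ implies in particular that $\rtimes_\mu$ has the ideal property (see Remark~\ref{half ex ex}). Thus $\rtimes_\mu$ is a crossed-product functor with the ideal property that is $u$-Morita compatible for every $u$ and whose associated group algebra is $C^*_E(G)$. The cited converse statement \cite[Corollary~5.7]{Buss:2014aa} (whose proof, as the authors noted, applies in this slightly broader setting, not just for correspondence functors) then forces $E$ to be an ideal in $B(G)$.

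The argument is essentially a combination of earlier results, so there is no substantive obstacle; the only delicate point is to be sure that the hypotheses feeding into Proposition~\ref{q mor com} are in place, which is why one must pass through the KLQ construction (rather than the Brown--Guentner construction, which need not be Morita compatible) in the ``if'' direction, and why one must invoke the ideal property rather than Morita compatibility alone when applying \cite[Corollary~5.7]{Buss:2014aa} in the ``only if'' direction.
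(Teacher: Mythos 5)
Your proof is correct and follows essentially the same route as the paper: apply the $\mathcal{E}(\cdot)$ construction to the KLQ functor for $E$, invoke Corollary~\ref{min exact cor}, Proposition~\ref{q mor com}, and Proposition~\ref{group alg} for the ``if'' direction, and \cite[Corollary~5.7]{Buss:2014aa} for the converse. Note only that, per the erratum, Proposition~\ref{q mor com} (and hence this corollary) rests on the now-retracted Lemma~\ref{always ext}, so both your argument and the paper's share the same open gap.
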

\begin{proof}
Take the Kaliszewiski-Landstad-Quigg functor $\rtimes_\mu$ associated to $E$ and consider $\rtimes_{\emu}$. 
This is an exact Morita compatible crossed-product functor by Theorems~\ref{thm-exact} and Proposition~\ref{q mor com}; and it has the same group algebra $C^*_E(G)$ as the original functor $\rtimes_\mu$ by Proposition~\ref{group alg}.  The converse follows from \cite[Corollary 5.7]{Buss:2014aa} again, noting that the conditions in the statement give what is needed for the proof of that corollary to work.
\end{proof}

\section{Lifting properties and attaining norms}\label{sec-lift}

Throughout this section, $G$ is a locally compact group (although we will need to assume that $G$ is discrete for some results), and $\rtimes_\mu$ an associated crossed product. 

Recall from Lemma \ref{norm real} that the $\emu$-norm is always attained via a fixed
$G$-equivariant surjection $\pi:C\to A$. Our goal in this section is to get further information about  
possible choices for such surjections, at least in the case when $G$ is discrete.
The following definition, due to Phillips, S\o{}rensen, and Thiel \cite{Phillips:2015aa}, will be very useful here.

\begin{definition}\label{ep}
A $G$-algebra $C$ is \emph{equivariantly projective} if whenever $B$ is a $G$-algebra, $J\subseteq B$ is a $G$-invariant ideal, and $\phi:C\to B/J$ is an equivariant $*$-homomorphism, there is an equivariant $*$-homomorphism $\widetilde{\phi}:C\to B$ that lifts $\phi$ (in other words, if $\pi:B\to B/J$ is the quotient map, then the diagram 
$$
\xymatrix{ & B\ar[d]^-\pi \\ C \ar[r]^-\phi \ar[ur]^-{\widetilde{\phi}} & B/J}
$$
commutes).

We say that $C$ is equivariantly projective in the unital category if it has the above property, but with all $C^*$-algebras and maps appearing above assumed unital.
\end{definition} 

\begin{proposition}\label{ep ind}
Let $\rtimes_\mu$ be a crossed product.  Let $A$ be a $G$-algebra, and let $C$ be an equivariantly projective $G$-algebra equipped with an equivariant quotient map $\pi:C\to A$.  Then for any $a\in C_c(G,A)$, $\|a\|_\pi=\|a\|_\emu$.

Moreover, if $A$ is unital, the same conclusion follows if we assume that $C$ is equivariantly projective in the unital category and equipped with a unital quotient map $\pi:C\to A$. 
\end{proposition}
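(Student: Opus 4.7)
The plan is to combine two ingredients: (i) Proposition \ref{norm real}, which supplies an equivariant surjection $\sigma:D\to A$ realizing the $\emu$-norm (with $D$ unital when $A$ is), and (ii) the equivariant projectivity of $C$, which will let us factor the given map $\pi$ through $\sigma$. Once we have such a factorization, the statement will follow by a quick functorial argument together with the obvious inequality $\|\cdot\|_\pi\le\|\cdot\|_\emu$.

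First I would apply Proposition \ref{norm real} to produce an equivariant short exact sequence
$$
\xymatrix{0\ar[r] & J\ar[r] & D\ar[r]^-\sigma & A\ar[r] & 0}
$$
with $\|a\|_\sigma=\|a\|_\emu$ for every $a\in C_c(G,A)$. Viewing $\pi:C\to A\cong D/J$ as an equivariant $*$-homomorphism into a quotient $G$-algebra, the assumption that $C$ is equivariantly projective yields an equivariant $*$-homomorphism $\widetilde{\pi}:C\to D$ with $\sigma\circ\widetilde{\pi}=\pi$. Writing $I:=\ker\pi$, this factorization forces $\widetilde{\pi}(I)\subseteq J$ (since $\sigma(\widetilde{\pi}(I))=\pi(I)=0$), so on the algebraic level $\widetilde{\pi}$ restricts to an equivariant map of short exact sequences $0\to I\to C\to A\to 0$ into $0\to J\to D\to A\to 0$ which is the identity in the right-hand column.

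Next, I would apply the functor $\rtimes_\mu$ to this commutative diagram. Using the definitions of the $\pi$- and $\sigma$-norms from Definition \ref{quot norm}, one obtains a commutative diagram
$$
\xymatrix{
0 \ar[r] & I\rtimes_{\mu,C}G \ar[d] \ar[r] & C\rtimes_\mu G \ar[d]^{\widetilde{\pi}\rtimes G} \ar[r] & A\rtimes_\pi G \ar[d] \ar[r] & 0 \\
0 \ar[r] & J\rtimes_{\mu,D}G \ar[r] & D\rtimes_\mu G \ar[r] & A\rtimes_\sigma G \ar[r] & 0
}
$$
whose rows are exact by construction. The induced right-hand vertical arrow $A\rtimes_\pi G\to A\rtimes_\sigma G$ restricts to the identity on $C_c(G,A)$ (because $\sigma\circ\widetilde{\pi}=\pi$), and is therefore norm decreasing on that common dense subalgebra. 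This gives $\|a\|_\sigma\le\|a\|_\pi$ for all $a\in C_c(G,A)$, and combining with the inequality $\|a\|_\pi\le\|a\|_\emu=\|a\|_\sigma$ from the definition of the $\emu$-norm yields the equality $\|a\|_\pi=\|a\|_\emu$.

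For the unital statement, Proposition \ref{norm real} provides a unital $D$ together with a unital equivariant surjection $\sigma:D\to A$, and equivariant projectivity in the unital category produces a unital equivariant lift $\widetilde{\pi}:C\to D$; the rest of the argument is identical. I do not anticipate any real obstacle: the proof is essentially a diagram chase once Proposition \ref{norm real} and the lifting property of $C$ are in hand. The only point that deserves a moment's care is the verification that $\widetilde{\pi}(I)\subseteq J$, which is however immediate from $\sigma\circ\widetilde{\pi}=\pi$.
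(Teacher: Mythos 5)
Your proof is correct and follows essentially the same route as the paper: equivariant projectivity supplies a lift $\widetilde{\pi}$ intertwining the two short exact sequences, and the induced $*$-homomorphism on quotient crossed products restricts to the identity on $C_c(G,A)$, forcing the norm inequality. The only cosmetic difference is that you compare $\pi$ against the single norm-realizing surjection from Proposition \ref{norm real}, whereas the paper compares it against an arbitrary equivariant surjection and then takes the supremum (invoking Proposition \ref{norm real} only to reduce to unital $B$ in the unital case); both variants are valid.
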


\begin{proof}
We need to show that if $\sigma:B\to A$ is any other equivariant quotient map, then for any $a\in C_c(G,A)$, $\|a\|_\sigma\leq \|a\|_\pi$.  
Consider the diagram
$$
\xymatrix{ & B\ar[d]^-\sigma \\ C \ar[r]^-\pi \ar@{-->}[ur]^-{\widetilde{\pi}} & A.}
$$
Equivariant projectivity for $C$ implies that the dashed arrow can be filled in by an equivariant $*$-homomorphism $\widetilde{\pi}:C\to B$.  Letting $I$ and $J$ be the kernels of $\pi$ and $\sigma$ respectively, we get a commutative diagram
\begin{equation}\label{lift ses}
\xymatrix{ 0 \ar[r] & I \ar[r] & C \ar[r]^-{\pi} \ar[d]^-{\widetilde{\pi}} & A \ar[r] \ar@{=}[d] & 0 \\ 0 \ar[r] & J \ar[r] & B \ar[r]^-{\sigma} & A \ar[r] & 0.}
\end{equation}
Commutativity of this diagram gives that $\widetilde{\pi}$ restricts to a map from $I$ to $J$.  Hence we get a commutative diagram
$$
\xymatrix{ 0 \ar[r] & I\rtimes_{\mu,C} G \ar[r] \ar[d]^-{\widetilde{\pi}|_I\rtimes_r G} & C\rtimes_\mu G \ar[r] \ar[d]^-{\widetilde{\pi}\rtimes_r G} & \frac{C\rtimes_\mu G}{I\rtimes_{\mu,C} G} \ar[r] & 0 \\ 0 \ar[r] & J\rtimes_{\mu,B} G \ar[r] & B\rtimes_\mu G \ar[r] & \frac{B\rtimes_\mu G}{J\rtimes_{\mu,B}G} \ar[r] & 0. }
$$
Commutativity implies that $\widetilde{\pi}\rtimes_r G$ induces a $*$-homomorphism on quotients
$$
\frac{C\rtimes_\mu G}{I\rtimes_{\mu,C} G}\to \frac{B\rtimes_\mu G}{J\rtimes_{\mu,B}G}, 
$$
which by commutativity of the diagram in line \eqref{lift ses} restricts to the identity on the $*$-subalgebra $C_c(G,A)$ of both sides.  This implies that 
$$
\|a\|_\sigma\leq \|a\|_\pi
$$
for any $a\in C_c(G,A)$ and we are done in the general case.

The statement in the unital case follows from essentially the same argument as by Lemma \ref{norm real} we may assume $B$ to be unital as well.
\end{proof}

As it gives some interesting examples, we will also explore a weakening of equivariant projectivity.  To explain the terminology, recall (see, e.g.\ \cite[Definition 13.1.1]{Brown:2008qy}) that a $C^*$-algebra $C$ has the \emph{lifting property} (LP) if whenever $\phi:C\to B/J$ is a contractive completely positive (ccp) map into a quotient $C^*$-algebra, there exists a ccp lift $\psi:C\to B$.  

\begin{definition}\label{weak lp}
A $G$-algebra $C$ has the \emph{weak equivariant lifting property} (WELP) if whenever $B$ is a $G$-algebra, $J\subseteq B$ is a $G$-invariant ideal, and $\phi:C\to B/J$ is an equivariant $*$-homomorphism, then there is an equivariant ccp map $\widetilde{\phi}:C\to B$ that lifts $\phi$ (in other words, if $\pi:B\to B/J$ is the quotient map, then the diagram 
$$
\xymatrix{ & B\ar[d]^-\pi \\ C \ar[r]^-\phi \ar[ur]^-{\widetilde{\phi}} & B/J}
$$
commutes).

We say that $C$ has the \emph{unital weak equivariant lifting property} (UWELP) if it has the above property, but with all maps and $C^*$-algebras assumed unital.
\end{definition}

\begin{remark}\label{elp rem}
It would be natural to define a stronger \emph{equivariant lifting property} (ELP): by analogy with the LP, one would here ask for equivariant ccp lifts of equivariant ccp maps $\phi$ as in the above, rather than just for equivariant $*$-homomorphic $\phi$.  We do not know if any non-trivial examples of ELP $G$-algebras exist for non-compact $G$, however.  This seems an interesting question.

Even for the trivial group the WELP is, a priori, weaker than the ordinary LP, although both are equivalent for separable \cstar{}algebras by an application of Stinespring's dilation theorem as in the proof of \cite[Theorem 13.1.3]{Brown:2008qy}.
\end{remark}

The proof of the following result is the same as that of Proposition \ref{ep ind} -- with equivariant ccp maps replacing equivariant $*$-homomorphisms at appropriate points -- and thus omitted.

\begin{proposition}\label{welp ind}
Let $\rtimes_\mu$ be a crossed product which is functorial for completely positive maps\footnote{For example, the reduced crossed product.  See \cite[Theorem 4.9]{Buss:2014aa} for some equivalent conditions.}.  Let $A$ be a $G$-algebra, and let $C$ be a $G$-algebra with the WELP, and equipped with an equivariant quotient map $\pi:C\to A$.  Then for any $a\in C_c(G,A)$, $\|a\|_\pi=\|a\|_\emu$. 

Moreover, if $A$ is unital, the same conclusion follows if we assume that $C$ has the UWELP and is equipped with a unital quotient map $\pi:C\to A$. \qed
\end{proposition}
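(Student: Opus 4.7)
The plan is to follow the proof of Proposition~\ref{ep ind} essentially verbatim, with the only change being that the equivariant lift provided by (W)ELP is a ccp map rather than a \Star homomorphism, and that the hypothesis of functoriality under cp maps is what lets us descend that lift to crossed products.

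First, given a second equivariant quotient $\sigma\colon B\to A$, I would invoke the WELP for $C$ applied to the equivariant \Star homomorphism $\pi\colon C\to A = B/J$ (where $J=\ker\sigma$) to produce an equivariant ccp lift $\widetilde{\pi}\colon C\to B$ satisfying $\sigma\circ\widetilde{\pi}=\pi$. Writing $I=\ker\pi$, one then observes that $\widetilde\pi$ sends $I$ into $J$: if $c\in I$ then $\sigma(\widetilde\pi(c))=\pi(c)=0$. This yields the same commutative diagram of equivariant short exact sequences as in~\eqref{lift ses}, except that the vertical maps $\widetilde\pi\colon C\to B$ and $\widetilde\pi|_I\colon I\to J$ are only ccp rather than \Star homomorphisms.

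Next, using the hypothesis that $\rtimes_\mu$ is functorial for completely positive maps, I descend to crossed products to obtain an equivariant ccp map $\widetilde\pi\rtimes G\colon C\rtimes_\mu G\to B\rtimes_\mu G$ which, by approximating elements of $I\rtimes_{\mu,C}G$ by functions in $C_c(G,I)$, restricts to a ccp map $I\rtimes_{\mu,C}G\to J\rtimes_{\mu,B}G$. Since a ccp map that sends an ideal into an ideal induces a ccp map on the quotients, I get a ccp map
$$
\frac{C\rtimes_\mu G}{I\rtimes_{\mu,C}G}\longrightarrow \frac{B\rtimes_\mu G}{J\rtimes_{\mu,B}G}
$$
which, from $\sigma\circ\widetilde\pi=\pi$ applied pointwise to functions in $C_c(G,C)$, restricts to the identity on the dense \Star subalgebra $C_c(G,A)$. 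As ccp maps are contractive, this yields $\|a\|_\sigma\leq\|a\|_\pi$ for every $a\in C_c(G,A)$; taking the supremum over all equivariant quotients $\sigma\colon B\to A$ then gives $\|a\|_\emu\leq\|a\|_\pi$, while the reverse inequality is immediate from the definition of $\|\cdot\|_\emu$.

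For the unital case with the UWELP, I would run the same argument noting that by the unital statement of Proposition~\ref{norm real}, the sup defining $\|\cdot\|_\emu$ may be attained over unital $\sigma\colon B\to A$ (with $B$ unital), which is precisely the setting in which the unital lift $\widetilde\pi$ exists. The main point requiring care is the existence of the descent at the crossed-product level; this is not a serious obstacle since it is guaranteed by the functoriality-for-cp-maps hypothesis, but it is the single place where this proof genuinely differs from that of Proposition~\ref{ep ind}, and the only ingredient where the weaker conclusion (ccp vs.\ \Star homomorphism) still suffices since all we need downstream is contractivity.
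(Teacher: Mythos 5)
Your proposal is correct and is exactly the argument the paper intends: the paper omits the proof, stating only that it is the same as that of Proposition~\ref{ep ind} with equivariant ccp maps replacing equivariant \Star homomorphisms at the appropriate points, and your write-up fills in precisely those points (WELP lift, descent via functoriality for cp maps, the induced ccp map on quotients restricting to the identity on $C_c(G,A)$, and contractivity). The reduction of the unital case to unital quotients via Proposition~\ref{norm real} also matches the paper's route.
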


The following corollary gives a way to compute $\rtimes_\emu$ in some special cases.  Note that $\C$ is clearly equivariantly projective in the unital category, so this gives a slightly different approach to proving Proposition \ref{group alg}.

\begin{corollary}\label{lp=red}
If $\rtimes_\mu$ is a crossed-product functor, and if $A$ is equivariantly projective, or equivariantly projective in the unital category, then $A\rtimes_\mu G=A\rtimes_\emu G$.  

If $\rtimes_\mu$ is a crossed-product functor which is functorial for completely positive maps, and if $A$ has either the WELP or the UWELP, then $A\rtimes_\mu G=A\rtimes_\emu G$.
\end{corollary}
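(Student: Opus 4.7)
The plan is to apply Propositions \ref{ep ind} and \ref{welp ind} in the trivial case where the ambient algebra equals $A$ itself. Specifically, I would take $C=A$ and let $\pi:C\to A$ be the identity map. This is vacuously an equivariant quotient map (with kernel $0$), and it is unital when $A$ is unital.

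First I would observe that, with this choice of $\pi$, the $\pi$-norm coincides with the $\mu$-norm on $C_c(G,A)$. Indeed, by Definition \ref{quot norm}, $\|a\|_\pi$ is the norm inherited from the inclusion $C_c(G,A) \hookrightarrow (C\rtimes_\mu G)/(I\rtimes_{\mu,C} G)$, which in the case $C=A$, $\pi=\mathrm{id}$, $I=0$ reduces to the inclusion $C_c(G,A) \hookrightarrow A\rtimes_\mu G$, i.e., $\|\cdot\|_\pi = \|\cdot\|_\mu$.

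Next, if $A$ is equivariantly projective (or equivariantly projective in the unital category, assuming $A$ unital), Proposition \ref{ep ind} applied to $\pi=\mathrm{id}:A\to A$ gives $\|a\|_\emu = \|a\|_\pi = \|a\|_\mu$ for all $a\in C_c(G,A)$. Combined with the standing inequality $\|a\|_\mu \leq \|a\|_\emu$ from \eqref{norm sandwich}, this yields equality of the completions $A\rtimes_\mu G = A\rtimes_\emu G$.

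For the second assertion, assume $\rtimes_\mu$ is functorial for completely positive maps and $A$ has the WELP (or UWELP with $A$ unital). The exact same argument, now invoking Proposition \ref{welp ind} in place of Proposition \ref{ep ind}, gives $\|a\|_\emu = \|a\|_{\mathrm{id}} = \|a\|_\mu$, and hence $A\rtimes_\mu G = A\rtimes_\emu G$. There is essentially no obstacle here beyond recognizing that the identity map is a legitimate choice of surjection to plug into the preceding propositions; the content of the corollary is entirely packaged in those two results.
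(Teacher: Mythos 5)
Your argument is correct and coincides with the paper's own proof: both take $C=A$ with $\pi=\mathrm{id}$, apply Proposition \ref{ep ind} (resp.\ Proposition \ref{welp ind}), and observe that the identity-map $\pi$-norm is just the $\mu$-norm. Nothing further is needed.
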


\begin{proof}
In either case, Proposition \ref{ep ind} or Proposition \ref{welp ind} implies that the $\emu$-norm on $C_c(G,A)$ equals the $\pi$-norm where $\pi:A\to A$ is the identity map; clearly this is just the $\rtimes_\mu$-norm, however.
\end{proof}

Having got through the above, it is maybe not clear that interesting equivariantly projective, or even WELP, $G$-algebras exist.  Moreover, to get much use out of the above results, we would need to show that for any $G$-algebra $A$ there exists an equivariant surjection $\pi:C\to A$, where $C$ has the WELP.  Unfortunately, we can only prove this in the discrete case, and must leave the general locally compact case as a question for now.

The following result is essentially a special case of \cite[Proposition 2.4]{Phillips:2015aa}; we nonetheless give a direct proof for the reader's convenience.

\begin{proposition}\label{ep exist}
Let $G$ be a discrete group, let $X$ be a set, and let $C$ be the universal $C^*$-algebra generated by a set $\{c_{x,g}\mid (x,g)\in X\times G\}$ of positive contractions indexed by the set $X\times G$.  Equip $C$ with the $G$-action induced by the set action 
$$
g:(x,h)\mapsto (x,gh)
$$
and universality.  Then $C$ is equivariantly projective, and admits a surjective equivariant $*$-homomorphism onto any $G$-algebra generated by a set of positive contractions of cardinality at most that of $X$.
\end{proposition}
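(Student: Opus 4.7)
My plan is to exploit the universal property of $C$ together with the observation that the $G$-action on the index set $X \times G$ is free, with the subset $\{(x,e) \mid x \in X\}$ forming a complete system of orbit representatives. Concretely, for any $G$-algebra $D$ with action $\beta$, I would first argue that giving an equivariant $*$-homomorphism $\psi \colon C \to D$ is the same as giving an arbitrary family of positive contractions $\{d_x\}_{x \in X}$ in $D$: one sets $\psi(c_{x,g}) := \beta_g(d_x)$, checks that this respects the defining relations (each $\beta_g(d_x)$ is a positive contraction), and extends by the universal property of $C$. Equivariance of the resulting $\psi$ is automatic since $\alpha_h(c_{x,g}) = c_{x,hg}$ by the construction of the $G$-action on $C$, and conversely any equivariant $\psi$ is determined by its values on $\{c_{x,e}\}_{x \in X}$.

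For equivariant projectivity, given an equivariant $\phi \colon C \to B/J$ with quotient map $\pi \colon B \to B/J$, each $\phi(c_{x,e}) \in B/J$ is a positive contraction and hence lifts to a positive contraction $b_x \in B$, using the standard fact that $\Cst$\nb-algebra quotient maps are surjective on positive contractions. Applying the universal recipe above to the family $\{b_x\}_{x \in X}$ produces an equivariant $\widetilde{\phi} \colon C \to B$. Then $\pi \circ \widetilde{\phi}$ and $\phi$ are both equivariant $*$-homomorphisms agreeing on the orbit representatives $c_{x,e}$, so by equivariance they agree on every generator $c_{x,g} = \alpha_g(c_{x,e})$, and hence on all of $C$.

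For the surjection assertion, I would take a $G$-algebra $A$ generated by a family $\{a_y\}_{y \in Y}$ of positive contractions with $|Y| \leq |X|$, choose any surjection $f \colon X \to Y$ (the trivial case $Y = \emptyset$ forces $A = 0$ and is handled by the zero map), and apply the universal recipe to $\{a_{f(x)}\}_{x \in X}$ to produce an equivariant $*$-homomorphism $\sigma \colon C \to A$. The image of $\sigma$ is a closed $G$-invariant $\Cst$-subalgebra of $A$ containing every $a_y$ (via any $x \in f^{-1}(y)$), so the generating hypothesis forces it to equal $A$. I do not expect a substantial obstacle: the argument is entirely formal once one notices that freeness of the $G$-set $X \times G$ reduces equivariant maps out of $C$ to arbitrary choices of positive contractions indexed by $X$, and the only non-trivial input beyond the universal property is the elementary $\Cst$-algebraic fact that positive contractions lift through quotients.
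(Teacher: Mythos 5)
Your proposal is correct and follows essentially the same route as the paper's proof: lift the images $\phi(c_{x,e})$ of the orbit representatives to positive contractions $b_x\in B$, define the lift on generators by $c_{x,g}\mapsto \beta_g(b_x)$ via universality, and obtain the surjection onto a generated $G$-algebra by the analogous recipe applied to a surjection $X\to S$. The only difference is presentational — you isolate the ``equivariant maps out of $C$ correspond to arbitrary families of positive contractions indexed by $X$'' principle explicitly, which the paper uses implicitly.
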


\begin{proof}
We first show that $C$ is equivariantly projective.  Let then $\phi:C\to B/J$ be an equivariant $*$-homomorphism.  For each $x\in X$, choose a positive contraction $b_x\in B$ that lifts $\phi(c_{x,e})$. 
Write $\beta$ for the action of $G$ on $B$, and let $\widetilde{\phi}:C\to B$ be the $*$-homomorphism uniquely defined by the map
$$
c_{x,g}\mapsto \beta_g(b_x)
$$
on generators.  Clearly this is equivariant and lifts $\phi$ on the generators; as it is a $*$-homomorphism it is thus equivariant and lifts $\phi$ on all of $C$.

Now, let $A$ have a generating set $S$ of positive contractions of cardinality at most that of $X$.  Write $\alpha$ for the action of $G$ on $A$.  Choose a surjective map $f:X\to S$.  Let now $\pi:C\to A$ be the $*$-homomorphism uniquely determined by the map
$$
c_{x,g}\mapsto \alpha_g(f(x)).
$$
This is equivariant on generators, so everywhere, and is surjective as $S$ generates $A$.  
\end{proof}

Another interesting example (and the one that originally inspired this work) is as follows.

\begin{proposition}\label{free welp}
Let $G$ be a discrete group, let $X$ be a set, let $F_{X\times G}$ be the free group on $X\times G$, and let $C:=C^*_{\max}(F_{X\times G})$ be the maximal group $C^*$-algebra of $F_{X\times G}$.  Equip $C$ with the $G$-action induced by the set action 
$$
g:(x,h)\mapsto (x,gh)
$$
and universality.  Then $C$ has the UWELP, and admits a surjective unital equivariant $*$-homomorphism onto any unital $G$-algebra generated by a set of unitaries of cardinality at most that of $X$.
\end{proposition}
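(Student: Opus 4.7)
The plan is to establish the UWELP by lifting the generating unitaries to contractions, dilating them to unitaries in a $2\times 2$ matrix algebra, and then using the universal property of the maximal group $C^*$-algebra, with the surjectivity statement handled directly by universality.

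For the UWELP, suppose we are given an equivariant unital \Star homomorphism $\phi\colon C\to B/J$ where $B$ is a unital $G$-algebra with action $\beta$ and $J\subseteq B$ is a $G$-invariant ideal. Writing $u_{x,g}\defeq \phi(c_{x,g})\in B/J$ for the images of the generators, each $u_{x,g}$ is a unitary in $B/J$. The first step is to lift each $u_{x,e}$ to a contraction $v_x\in B$ (possible by standard $C^*$-algebraic arguments), and then to define $v_{x,g}\defeq \beta_g(v_x)$; equivariance of $\phi$ guarantees that $v_{x,g}$ still lifts $u_{x,g}$. Next I would form the dilated unitary
$$
\tilde v_{x,g}\defeq \begin{pmatrix} v_{x,g} & (1-v_{x,g}v_{x,g}^*)^{1/2} \\ (1-v_{x,g}^*v_{x,g})^{1/2} & -v_{x,g}^* \end{pmatrix}\in M_2(B).
$$
Since $\beta$ acts entrywise on $M_2(B)$ and since $\Star$\nobreakdash-automorphisms commute with continuous functional calculus, one has $\beta_g^{(2)}(\tilde v_{x,e})=\tilde v_{x,g}$. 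Hence the assignment $(x,g)\mapsto \tilde v_{x,g}$ defines a $G$-equivariant map from the generating set $X\times G$ of $F_{X\times G}$ into $\U(M_2(B))$, which by the universal property of $C=C^*_{\max}(F_{X\times G})$ extends to a unital equivariant \Star homomorphism $\rho\colon C\to M_2(B)$.

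Then compressing by the $\beta^{(2)}$-fixed corner projection $p=\operatorname{diag}(1,0)$ and identifying $pM_2(B)p\cong B$ yields a unital equivariant ccp map $\phi_0\colon M_2(B)\to B$, and I take $\tilde\phi\defeq \phi_0\circ\rho$. By construction $\tilde\phi$ is unital, equivariant, and ccp, and on each generator $\tilde\phi(c_{x,g})=v_{x,g}$, so $\pi\circ\tilde\phi$ agrees with $\phi$ on generators. The step that one has to be slightly careful about is upgrading this agreement on generators to agreement on all of $C$: here I would invoke the multiplicative domain of the ccp map $\pi\circ\tilde\phi$, which contains every $c\in C$ with $\|c\|=1$ whose image is unitary, and in particular contains each $c_{x,g}$; as the multiplicative domain is a $C^*$-subalgebra and the $c_{x,g}$ generate $C$, it is all of $C$, so $\pi\circ\tilde\phi$ is a \Star homomorphism and equals $\phi$.

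For the second assertion, let $A$ be a unital $G$-algebra with action $\alpha$ generated as a $C^*$-algebra by a set $S$ of unitaries with $|S|\le |X|$, and fix a surjection $f\colon X\to S$. The assignment $(x,g)\mapsto \alpha_g(f(x))$ defines a $G$-equivariant map from $X\times G$ into $\U(A)$, which extends to a group homomorphism $F_{X\times G}\to\U(A)$ and then, by universality, to a unital equivariant \Star homomorphism $C\to A$. Its image contains $S$ and is therefore all of $A$. The main technical content is the dilation-plus-multiplicative-domain argument in the UWELP step; everything else is formal bookkeeping with the universal property of $C^*_{\max}(F_{X\times G})$ and equivariance of the standard constructions.
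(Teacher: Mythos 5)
Your proof is correct and follows essentially the same route as the paper's: lift the generators at $g=e$ to contractions, dilate to unitaries in $M_2(B)$, extend equivariantly by universality of $C^*_{\max}(F_{X\times G})$, and compress to the top-left corner. The only difference is that you spell out, via the multiplicative-domain argument, why the compressed ucp map actually lifts $\phi$ on all of $C$ and not just on generators --- a detail the paper leaves implicit --- and this added justification is valid.
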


\begin{proof}
The statement about the existence of a quotient map $C\to A$ follows from essentially the same construction as in the proof of Proposition \ref{ep exist}: we leave the details to the reader.  It remains to show that $C$ has the UWELP, so let $\phi:C\to B/J$ be a unital equivariant $*$-homomorphism.  For each $(x,g)\in X\times G$, let $u_{x,g}$ be the corresponding generating unitary for $C$, and choose a contractive lift $b_{x}\in B$ of $\phi(u_{x,e})\in B/J$.  Now define 
$$
v_x:=\begin{pmatrix} b_x & (1-b_xb_x^*)^{1/2} \\ (1-b_x^*b_x)^{1/2} & -b_x^* \end{pmatrix}\in M_2(B),
$$
which is unitary.  Let $\beta$ denote the action on $B$.  Universality implies that the map defined on generators by 
$$
u_{x,g}\mapsto \beta_g(v_x)
$$
extends to an equivariant $*$-homomorphism $C\to M_2(B)$.  The top left corner of this $*$-homomorphism is the desired ucp equivariant lift of $\phi$.  
\end{proof}

\begin{remark}
It is proved in \cite{Phillips:2015aa}*{Proposition~2.4} that the Bernoulli shift $G$-actions on free products of the form $A=\free_{g\in G}B$ are always $G$\nb-equivariantly projective if the base \cstar{}algebra $B$ is (non-equivariantly) projective. This result contains Proposition~\ref{ep exist} as a special case by taking $B\cong\contz(0,1]$, the universal \cstar{}algebra generated by a positive contraction, which is a projective \cstar{}algebra. Similarly, one can show that $A=\free_{g\in G}B$ has the WELP for the Bernoulli shift $G$-action provided that $B$ has the WELP for the trivial group action. An analogous version of the UWELP holds for unital free products (i.e., amalgamated over $\C$), generalising Proposition~\ref{free welp}. To prove these assertions one can use Boca's result from \cite[Theorem 3.1]{Boca:1991aa} on free products of ccp (or ucp) maps. Indeed, in the non-equivariant situation, this idea has already been used by Boca to prove that certain lifting properties are preserved by free products in \cite{Boca:1997aa}.
\end{remark}

\section{Restriction}\label{sec-BC}

Suppose that $H$ is a closed subgroup of a locally compact group $G$ and that $\rtimes_\mu$ is a crossed-product functor for $G$.  Our goal in this section is to study the relationship between the minimal exact crossed products for $H$ and $G$, with a view to applications to the (reformulated) Baum-Connes conjecture.

For an $H$-algebra $(A,\alpha)$  consider the induced $G$\nb-algebra\linebreak $(\Ind_H^G(A,\alpha), \Ind\alpha)$
in which
$$\Ind_H^G(A,\alpha):=\left\{F\in C_b(G,A): \begin{matrix} \alpha_h(F(gh))=F(g)\;\forall g\in G, h\in H,\\
\text{and}\; (gH\mapsto \|F(g)\|)\in C_0(G/H)\end{matrix}\right\}.$$
The $G$\nb-action on $\Ind_H^G(A,\alpha)$ is given by $\big(\Ind\alpha_g(F)\big)(k)=F(g^{-1}k)$.
Green's imprimitivity theorem (see \cite[Theorem 17]{Green:1978aa} or \cite[Section 2.6]{Echterhoff:2009jo}) provides a
natural equivalence bimodule $X(A,\alpha)$ between\linebreak $\Ind_H^G(A,\alpha)\rtimes_{\Ind\alpha,\max}G$ and $A\rtimes_{\alpha,\max}H$.
Let
$$I_{\mu}=\ker\Big(\Ind_H^G(A,\alpha)\rtimes_{\max}G\to \Ind_H^G(A,\alpha)\rtimes_{\mu}G\Big).$$
By the Rieffel correspondence between ideals in
$\Ind_H^G(A,\alpha)\rtimes_{\max}G$ and ideals in $A\rtimes_{\max}H$ there is a unique ideal $I_{\mu|_H}\subseteq A\rtimes_{\max}H$ such that
$X(A,\alpha)$ factors through an equivalence bimodule
$X_\mu(A,\alpha)$ between $\Ind_H^G(A,\alpha)\rtimes_{\mu}G$ and the quotient
\begin{equation}\label{eq-mures}
A\rtimes_{\mu|_H}H:=(A\rtimes_{\max}H)/I_{\mu|_H}.
\end{equation}

The following definition is taken from  \cite{Buss:2015ty}:
\begin{definition}\label{def-mures}
Let $\rtimes_{\mu}$ be a crossed-product functor for $G$.
Then the assignment $(A,\alpha)\mapsto A\rtimes_{\mu|_H}H$ with $ A\rtimes_{\mu|_H}H$ constructed as
above is called the {\em restriction} of $\rtimes_\mu$ to $H$.
\end{definition}

In \cite[Proposition 6.6]{Buss:2015ty} we also observed that 
for a second countable locally compact group $G$, the Baum-Connes assembly map
$$\as^{\mu|_H}_{(A,H)}: K_*^{\top}(H,A)\to K_*(A\rtimes_{\mu|_H}H)$$
is an isomorphism if and only if the assembly map
$$\as^{\mu}_{(\Ind_H^GA,G)}: K_*^{\top}(G, \Ind_H^GA)\to K_*(\Ind_H^GA\rtimes_{\Ind\alpha,\mu}G)$$
is an isomorphism. In particular, if $G$ satisfies the analogue of the Baum-Connes conjecture
 for the $\rtimes_{\mu}$-crossed product, then $H$ satisfies
the conjecture for the $\rtimes_{\mu|_H}$-crossed product.
Thus,  it is interesting to study the question whether
the restriction $\rtimes_{\E_G|_H}$ of the minimal exact crossed-product functor $\rtimes_{\E_G}$ for $G$ to a closed subgroup $H$ will
always be the minimal exact functor $\rtimes_{\E_H}$ for $H$, since this would imply
that the new conjecture of Baum, Guentner, and Willett (as explained in the introduction) 
passes to closed subgroups. In \cite[Theorem 7.13]{Buss:2015ty} we showed this 
for the case where $H$ is normal or cocompact in $G$.\footnote{In fact, we showed this for the smallest exact
{\em correspondence functor} $\rtimes_{\E_{\Cor}}$ which coincides with $\rtimes_\E$ on the category of separable $G$-algebras
if $G$ is second countable. But the same arguments as used in the proof of \cite[Theorem 7.13]{Buss:2015ty} apply directly to the 
smallest exact functors considered here.}
Below we shall give a proof of this fact 
if $H$ is open in $G$. We need the following result, which follows by the same arguments as 
used in the proof of  \cite[Lemma 7.6]{Buss:2015ty}.  In what follows, we denote by $i_B:B\to M(B\rtimes_\mu G)$ and $i_G:G\to M(B\rtimes_\mu G)$ the canonical embeddings into the multiplier algebra of a crossed product $B\rtimes_\mu G$.

\begin{lemma}\label{subgroups}
Let $H$ be a closed subgroup of $G$, and let $B$ be a $G$-algebra.
Then the canonical mapping
$i_B\rtimes i_G|_H\colon B\rtimes_{\max} H\to \M(B\rtimes_{\max}G)$ factors to a well-defined \Star{}homomorphism
$$i_B^{\E_G}\rtimes i_G^{\E_G}|_H\colon B\rtimes_{\E_H} H\to \M(B\rtimes_{\E_G}G). \eqno \qed$$
\end{lemma}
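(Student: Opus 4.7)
The approach is to leverage the concrete presentation of $B\rtimes_{\E_G} G$ furnished by Proposition~\ref{norm real} in order to construct the factorization in two steps: first pass from $B\rtimes_{\E_H} H$ to an explicit $\pi$-completion, then map that $\pi$-completion into $\M(B\rtimes_{\E_G} G)$.

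First, apply Proposition~\ref{norm real} to fix a $G$-equivariant short exact sequence $0\to I\to C\xrightarrow{\pi} B\to 0$ with
$$B\rtimes_{\E_G} G=(C\rtimes_r G)\big/(I\rtimes_{r,C}G).$$
Restricting the $G$-actions on $C$ and $I$ to $H$, the surjection $\pi$ remains $H$-equivariant, and so defines a $\pi$-norm $\|\cdot\|_{\pi_H}$ on $C_c(H,B)$ via the inclusion $C_c(H,B)\hookrightarrow (C\rtimes_r H)/(I\rtimes_{r,C}H)$. By the definition of $\|\cdot\|_{\E_H}$ as a supremum over $\pi$-norms, $\|f\|_{\pi_H}\leq \|f\|_{\E_H}$ for all $f\in C_c(H,B)$, yielding a canonical surjective $*$-homomorphism $B\rtimes_{\E_H} H\twoheadrightarrow (C\rtimes_r H)/(I\rtimes_{r,C}H)$.

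Second, I would construct a $*$-homomorphism $(C\rtimes_r H)/(I\rtimes_{r,C}H)\to\M(B\rtimes_{\E_G}G)$ by composing two maps. The first is the canonical restriction $\iota:C\rtimes_r H\to\M(C\rtimes_r G)$ obtained by integrating the covariant pair $(i_C^r,i_G^r|_H)$ of $(C,H)$ into $\M(C\rtimes_r G)$; the fact that this integrates continuously in the reduced norm rests on the standard statement that, for a closed subgroup $H\leq G$, the regular representation $\lambda_G$ restricted to $H$ is quasi-equivalent to a multiple of $\lambda_H$. The second is the strict extension $\widetilde q:\M(C\rtimes_r G)\to \M(B\rtimes_{\E_G}G)$ of the nondegenerate surjection $q:C\rtimes_r G\twoheadrightarrow B\rtimes_{\E_G}G$. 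Set $\Psi:=\widetilde q\circ\iota$.

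The key step is to verify that $\Psi$ annihilates the ideal $I\rtimes_{r,C}H$. For $x\in C_c(H,I)$ and $y\in C_c(G,C)$, the product $\iota(x)\cdot y$ in $\M(C\rtimes_r G)$ is the convolution
$$g\mapsto \int_H x(h)\,\alpha_h(y(h^{-1}g))\,dh,$$
and since $I$ is a $G$-invariant ideal with $x(h)\in I$, the integrand takes values in $I$, whence $\iota(x)\cdot y\in C_c(G,I)\subseteq I\rtimes_{r,C}G=\ker q$. By density and a symmetric right-multiplication argument, $\iota(I\rtimes_{r,C}H)\cdot (C\rtimes_r G)\subseteq\ker q$, so $\Psi(I\rtimes_{r,C}H)=0$. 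Thus $\Psi$ descends to $(C\rtimes_r H)/(I\rtimes_{r,C}H)\to\M(B\rtimes_{\E_G}G)$; composing with the surjection from the first step gives the desired map $B\rtimes_{\E_H} H\to\M(B\rtimes_{\E_G}G)$, and a direct check on generators $b\in B$ and $h\in H$ confirms that it factors the canonical covariant pair $i_B\rtimes i_G|_H$ as required.

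The main obstacle is the justification of the restriction map $\iota:C\rtimes_r H\to \M(C\rtimes_r G)$ for an arbitrary closed (not merely open) subgroup $H$ — this is the one nontrivial external input, and is exactly why the authors defer to the analogous argument in \cite[Lemma~7.6]{Buss:2015ty}. Everything else amounts to bookkeeping with the definitions of the $\emu$-norm, the convolution formula in crossed products, and standard diagram chases.
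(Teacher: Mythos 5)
Your argument is correct, but it is genuinely different from what the paper does: the paper gives no self-contained proof at all, instead asserting that the lemma ``follows by the same arguments as used in the proof of \cite[Lemma 7.6]{Buss:2015ty}'', i.e.\ by the machinery developed there for general (correspondence) crossed-product functors and their restrictions to closed subgroups. You instead exploit the special structure of the minimal exact functor: by Proposition \ref{norm real} both $\E_G$- and $\E_H$-norms are dominated by (indeed, for $G$, attained as) concrete $\pi$-norms coming from a single $G$-equivariant presentation $0\to I\to C\to B\to 0$ over the \emph{reduced} crossed product, which reduces the whole lemma to (a) the classical restriction map $C\rtimes_r H\to \M(C\rtimes_r G)$ for closed subgroups, and (b) the elementary convolution computation showing that $\iota(C_c(H,I))\cdot C_c(G,C)\subseteq C_c(G,I)$, so that the composite kills $I\rtimes_{r,C}H$. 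This buys a more elementary and self-contained proof for the specific functor $\rtimes_{\E}$, at the cost of not generalizing to arbitrary exotic functors the way the argument of \cite[Lemma 7.6]{Buss:2015ty} does; note also that your route only invokes the general (non-unital) case of Proposition \ref{norm real}, so it is unaffected by the gap in that proposition's unital refinement recorded in the erratum. Two small points of hygiene: the external input in step (a) is more precisely the weak containment of the covariant pair $(i_C, i_G|_H)$ restricted to $H$ in the regular representation of $(C,H)$ (coming from the unitary equivalence of $\lambda_G|_H$ with an amplification of $\lambda_H$), rather than ``quasi-equivalence'' per se; and in the final identification you should note that $\widetilde q\circ i_C=i_B\circ\pi$ and $\widetilde q\circ i_G=i_G^{\E_G}$, which is what makes the descended map agree with $i_B\rtimes i_G|_H$ on $C_c(H,B)$ — you assert this check but it is worth recording.
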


Note  that if $H$ is open in $G$, the above homomorphism extends the canonical inclusion 
of $C_c(H, B)$ into $C_c(G,B)$, and hence it takes its image in $B\rtimes_{\E_G}G$.
We are now ready for

\begin{theorem}\label{main}
Suppose that $H$ is an open subgroup of the locally compact group $G$. Then 
$\rtimes_{\E_G|_H}=\rtimes_{\E_H}$.
\end{theorem}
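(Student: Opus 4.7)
The strategy mirrors that of \cite[Theorem~7.13]{Buss:2015ty} for normal or cocompact subgroups: prove the two inequalities $\rtimes_{\E_H}\le\rtimes_{\E_G|_H}$ and $\rtimes_{\E_G|_H}\le\rtimes_{\E_H}$ separately, using minimality of the $\E$-functors on both sides.

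The forward inequality follows from minimality of $\rtimes_{\E_H}$. One checks that $\rtimes_{\E_G|_H}$ is itself an exact crossed-product functor for $H$ dominating $\rtimes_r$. The domination $\rtimes_r\le\rtimes_{\E_G|_H}$ comes from $\rtimes_{r|_H}=\rtimes_r$ (a consequence of Green's imprimitivity) together with monotonicity of the restriction operation. Exactness of $\rtimes_{\E_G|_H}$ is inherited from exactness of $\rtimes_{\E_G}$ via Rieffel correspondence: for $H$ open, the induction functor $\Ind_H^G$ from $H$-algebras to $G$-algebras is exact (as a vector space it is essentially $c_0(G/H)\otimes({-})$, with an appropriate diagonal $G$-action), so Green's bimodule $X_\mu(A,\alpha)$ transports exactness cleanly between the two sides. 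Corollary~\ref{min exact cor} then yields $\rtimes_{\E_H}\le\rtimes_{\E_G|_H}$.

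For the reverse inequality, the plan is to construct an exact crossed-product functor $\rtimes_\nu$ for $G$ with $\rtimes_{\nu|_H}=\rtimes_{\E_H}$; minimality of $\rtimes_{\E_G}$ forces $\rtimes_{\E_G}\le\rtimes_\nu$, and passing to restrictions gives $\rtimes_{\E_G|_H}\le\rtimes_{\nu|_H}=\rtimes_{\E_H}$. The functor $\rtimes_\nu$ is built in two steps. First, for any $G$-algebra $B$ and any $x\in C_c(G,B)$, set
$$
\|x\|_\mu:=\sup\{\|(\pi\rtimes u)(x)\|\},
$$
where the supremum runs over covariant representations $(\pi,u)$ of $(B,G)$ whose restriction (via the inclusion $C_c(H,B)\subseteq C_c(G,B)$, valid since $H$ is open) factors through $\Res_H^G B\rtimes_{\E_H}H$. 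This defines a $C^*$-norm satisfying $\|\cdot\|_r\le\|\cdot\|_\mu\le\|\cdot\|_{\max}$, it is functorial in $B$, and by construction $\rtimes_{\mu|_H}=\rtimes_{\E_H}$ (regular representations and Lemma~\ref{subgroups} together account for the inequality $\le$, while the $\rtimes_{\E_H}$-representations themselves account for $\ge$). Second, apply the half-exact completion of Section~\ref{sec-halfexact} to obtain $\rtimes_\nu:=\rtimes_{\E(\mu)}$, which is exact by Corollary~\ref{min exact cor}.

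The main obstacle is the verification that this enlargement from $\rtimes_\mu$ to its smallest exact majorant $\rtimes_{\E(\mu)}$ does not enlarge the norm on the $H$-side, i.e.\ that $\rtimes_{\E(\mu)|_H}=\rtimes_{\mu|_H}=\rtimes_{\E_H}$. Openness of $H$ is essential here in two ways: it gives the inclusion $C_c(H,B)\subseteq C_c(G,B)$ used to define $\rtimes_\mu$, and it implies that for any $H$-algebra $A$, $\Ind_H^G A$ decomposes as an $H$-algebra into a direct sum of the $eH$-summand (canonically isomorphic to $A$) and an orthogonal complement supported on the other $H$-orbits in $G/H$. Using Proposition~\ref{norm real} to realize the $\E(\mu)$-norm on $\Ind_H^G A$ through a single equivariant quotient $\sigma:C\to\Ind_H^G A$, one then compares Green's bimodule $X_\mu(A,\alpha)$ for $\rtimes_\mu$ and $\rtimes_\nu$: the $A$-summand of $\Ind_H^G A$ combined with the construction of $\rtimes_\mu$ shows that the norm induced on $C_c(H,A)$ by $\Ind_H^G A\rtimes_\nu G$ is exactly $\|\cdot\|_{\E_H}$, as required.
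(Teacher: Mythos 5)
Your first inequality, $\rtimes_{\E_H}\le\rtimes_{\E_G|_H}$, is the same as the paper's: one checks that $\rtimes_{\E_G|_H}$ is an exact crossed-product functor for $H$ (the paper cites \cite[Theorem 6.3]{Buss:2015ty} for this; your sketch via exactness of $\Ind_H^G$ and the Rieffel correspondence is the content of that result) and invokes minimality of $\rtimes_{\E_H}$. The problem is the reverse inequality. Your plan is to build an exact $G$-functor $\rtimes_\nu$ with $\rtimes_{\nu|_H}=\rtimes_{\E_H}$ by first defining a Brown--Guentner-type functor $\rtimes_\mu$ with $\rtimes_{\mu|_H}=\rtimes_{\E_H}$ and then setting $\rtimes_\nu=\rtimes_{\E(\mu)}$. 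The scheme is sound in principle (restriction is monotone, so $\rtimes_{\E_G|_H}\le\rtimes_{\nu|_H}$ would follow from minimality of $\rtimes_{\E_G}$), but the step you yourself flag as ``the main obstacle'' --- that $\rtimes_{\E(\mu)|_H}=\rtimes_{\mu|_H}$, i.e.\ that exactifying $\mu$ over $G$ does not enlarge the restricted norm over $H$ --- is never actually proved. Your closing sentences only restate what must be shown: given a single realizing quotient $\sigma:C\to\Ind_H^GA$ from Proposition \ref{norm real}, you would have to control the norm that the corner $p\bigl(\tfrac{C\rtimes_\mu G}{\ker\sigma\rtimes_{\mu,C}G}\bigr)p$ induces on $C_c(H,A)$, and nothing in the construction of $\mu$ does this, because $C$ is an arbitrary $G$-algebra with no induced structure and no evident lift of the projection $\chi_{eH}$. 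This missing step is of exactly the same nature and difficulty as the inequality $\rtimes_{\E_G|_H}\le\rtimes_{\E_H}$ itself, so the detour through $\nu$ does not reduce the problem.

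The paper closes this direction without constructing any auxiliary functor. Since $H$ is open, $\chi_{eH}$ gives a full projection $p\in\M(\Ind_H^GA\rtimes_{\E_G}G)$ with $A\rtimes_{\E_G|_H}H=p(\Ind_H^GA\rtimes_{\E_G}G)p$, and Lemma \ref{subgroups} provides a $*$-homomorphism $\varphi:\Ind_H^GA\rtimes_{\E_H}H\to \Ind_H^GA\rtimes_{\E_G}G$ extending the inclusion $C_c(H,\Ind_H^GA)\subseteq C_c(G,\Ind_H^GA)$. The $H$-algebra decomposition $\Ind_H^GA\cong A\oplus I$ (functions supported on $eH$ versus those vanishing there) splits the domain of $\varphi$ as $(A\rtimes_{\E_H}H)\oplus(I\rtimes_{\E_H}H)$, and on functions one checks that $\varphi$ carries the summand $A\rtimes_{\E_H}H$ into the corner $p(\Ind_H^GA\rtimes_{\E_G}G)p$. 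This is the inverse of the quotient map $q:A\rtimes_{\E_G|_H}H\to A\rtimes_{\E_H}H$ coming from your first inequality. If you want to salvage your approach, you will in any case need an input of this kind (a canonical map from an $\E_H$-completion into a $G$-crossed-product completion landing in the right corner); at that point the auxiliary functor $\nu$ becomes superfluous.
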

\begin{proof} For the proof we shall use a special form of Green's imprimitivity theorem in case where $H$ is open in $G$.
For this let $\chi_{eH}$ denote the characteristic function of the coset $\{eH\}\subseteq G/H$, viewed as a projection 
in the multiplier algebra $\M(\Ind_H^GA)$ in the canonical way, and let $p\in \M(\Ind_H^GA\rtimes_{\max}G)$ denote 
its image in the crossed product. Then it is shown in \cite[Proposition 2.6.8]{Echterhoff:2009jo}
that $p$ is a full projection such that $p(\Ind_H^GA\rtimes_{\max}G)p=A\rtimes_\max H$ and 
the resulting $\Ind_H^GA\rtimes_{\max}G$-$A\rtimes_{\max}H$ equivalence bimodule $(\Ind_H^GA\rtimes_{\max}G)p$ 
is isomorphic to Green's equivalence bimodule $X(A,\alpha)$. 
This implies that the image of $p$ in $\M(\Ind_H^GA\rtimes_{\E_G}G)$ (which we also denote by $p$) is a full projection such that 
$A\rtimes_{\E_G|_H}G=p(\Ind_H^GA\rtimes_{\E_G}G)p$.
It follows from \cite[Theorem 6.3]{Buss:2015ty} that $\rtimes_{\E_G|_H}$ is an exact functor for $H$. Thus, by minimality of $\E_H$, the identity 
on $C_c(G,A)$ induces a quotient map $q: A\rtimes_{\E_G|_H}H\to A\rtimes_{\E_H}H$. 

We now construct an inverse for $q$. Indeed, by Lemma \ref{subgroups} we have a canonical $*$-homomorphism 
$\varphi: \Ind_H^GA\rtimes_{\E_H}H\to \Ind_H^GA\rtimes_{\E_G}G$ which extends the canonical inclusion of 
$C_c(H,\Ind_H^GA)$ into $C_c(G, \Ind_H^GA)$.  As an $H$-algebra, $\Ind_H^GA$ decomposes as a direct sum 
$A\oplus I$, where we identify $A$ with the functions in $\Ind_H^GA$ which live on the coset $eH$ and $I$ with the functions 
which vanish on $eH$. This implies a decomposition $\Ind_H^GA\rtimes_{\E_H}H\cong (A\rtimes_{\E_H}H)\oplus (I\rtimes_{\E_H}H)$
and it is easily verified on functions in $C_c(H,A)$ that the homomorphism $\varphi$ maps the summand 
$A\rtimes_{\E_H}H$ into the corner $p(\Ind_H^GA\rtimes_{\E_G}G)p\cong A\rtimes_{\E_G|_H}H$, thus
 providing an inverse for $q$.
\end{proof}

\begin{remark} Using \cite[Lemma 7.6]{Buss:2015ty} instead of Lemma \ref{subgroups} in the above proof, the same arguments 
as used above also show that the restriction of the minimal exact correspondence functor $\rtimes_{\E_{\Cor}^G}$ of $G$
to an open subgroup $H$ of $G$ coincides with the minimal exact correspondence functor $\rtimes_{\E_{\Cor}^H}$ of $H$.
\end{remark}

\section{Some Questions}\label{sec questions}
There are still many important open questions about the smallest exact crossed-product functor $\rtimes_\E$. Here are some of them:

\begin{question}
 Is it true that the smallest exact crossed-product functor for $G$ is automatically a correspondence functor?
 \medskip
 
Since exact functors automatically satisfy the ideal property, it follows from \cite[Theorem 4.9]{Buss:2014aa} that  being a correspondence functor  is equivalent to any of the following 
 assertions:
 \begin{enumerate}
 \item For each $G$-algebra $A$ and $G$-equivariant projection $p\in \M(A)$, the 
 canonical map $pAp\rtimes_\E G\to A\rtimes_\E G$ is injective.
 \item For each $G$-algebra $A$ and $G$-equivariant {\em full} projection $p\in \M(A)$, the 
 canonical map $pAp\rtimes_\E G\to A\rtimes_\E G$ is injective.
 \item For each (full) $G$-invariant hereditary subalgebra $B$ of a $G$-algebra $A$, the 
 canonical map $B\rtimes_\E G\to A\rtimes_\E G$ is injective.
\item $\rtimes_\E$ is {\em strongly Morita compatible} in the sense that for any $G$-equivariant Morita equivalence bimodule
$(X,\gamma)$  between two $G$-algebras $A$ and $B$, the canonical $C_c(G,A)$-$C_c(G,B)$ bimodule
$C_c(G,X)$ completes to give an $A\rtimes_\E G$-$B\rtimes_\E G$ equivalence bimodule. 
\item The functor $\rtimes_\E$ is functorial for $G$-equivariant ccp maps.
\end{enumerate}
Note that it is shown in Corollary \ref{str mor com} above that all this holds on the category of $\sigma$-unital $G$-algebras.
\end{question}

\begin{question} Is the smallest exact crossed-product functor $\rtimes_\E$ injective?
\medskip

We say that a crossed-product functor $\rtimes_\mu$ is {\em injective}, if for every injective $G$-equivariant $*$-homomorphism
$\varphi:A\to B$ the descent $\varphi\rtimes_\mu G: B\rtimes_\mu G\to A\rtimes_\mu G$ is injective as well.
The reduced crossed-product functor $\rtimes_r$ is well known to be injective but lacks exactness in general.
At some early point of the project we thought we could show that injectivity holds for $\rtimes_\E$, but our argument 
had a serious gap. We then thought we had an argument proving that $\rtimes_\E$ is not injective in general, but again found
a gap in the proof. So right now, we have no clue about the correct answer  to this question.
Indeed, in the moment we do not know of any example of an exact and injective crossed-product functor 
for a non-exact group. 

Note that injectivity of $\rtimes_\E$, if true,  would imply some nice properties of this functor: It would be continuous for 
general inductive limits of $G$-algebras, i.e., we would get $(\lim_i A_i)\rtimes_\E G=\lim_i(A_i\rtimes_\E G)$ for every 
directed system of $G$-algebras $(A_i, \varphi_i)_{i\in I}$. It would also imply that the functor $\rtimes_\E$ 
preserves continuous fields of $C^*$-algebras in the sense that if $G$ acts fibrewise  on the section algebra 
$A$ of a continuous field of $C^*$-algebras over a base space $X$ with fibres $A_x$, $x\in X$, then 
$A\rtimes_\E G$ would be the section algebra of a continuous field of $C^*$-algebras with fibres $A_x\rtimes_\E G$.
\end{question}

\begin{question} Suppose that $G=N\times H$ is the product of two groups. Can we decompose the 
crossed product $A\rtimes_{\E_G}G=A\rtimes_{\E_G}(N\times H)$ as an iterated crossed product
$(A\rtimes_{\E_N}N)\rtimes_{\E_H}H$? Does it hold for discrete groups $N$ and $H$?
\medskip 

A positive answer would give the first step for proving that for general closed normal subgroups $N\subseteq G$
we could write $A\rtimes_{\E_G}G$ as an iterated crossed product $(A\rtimes_\E N)\rtimes_{\E_{G/N}}G/N$,
where in general the outer crossed product has to be viewed as a  twisted crossed product compatible 
with the functor $\rtimes_{\E_{G/N}}$ for the quotient group $G/N$. Such a decomposition would give 
a major step for a proof  that the new formulation of the Baum-Connes conjecture enjoys the same 
permanence properties as were shown for the classical conjecture in \cite{Chabert:2001hl} and \cite{Chabert:2004fj}.
We refer to \cite[Section 8]{Buss:2015ty}  for a discussion of this problem.
\end{question}

\begin{question}
Let $H$ be a closed subgroup for $G$. Can we always show that the smallest exact crossed-product functor $\rtimes_{\E_G}$ restricts
to the minimal exact crossed-product functor $\rtimes_{\E_H}$ for $H$? 
\medskip 

So far, we only know this if $H$ is open in $G$ (by Section \ref{sec-BC}) above, and for normal and cocompact subgroups $H$ of $G$
(by \cite[Theorem 7.13]{Buss:2015ty}). A positive answer would imply that the validity of the new formulation  of the
Baum-Connes conjecture for a group $G$ would pass to all closed subgroups of $G$.
\end{question}



\begin{bibdiv}
\begin{biblist}

\bib{Archbold:1980aa}{article}{
      author={Archbold, Robert~J.},
      author={Batty, Charles J.~K.},
       title={${C^*}$-tensor norms and slice maps},
        date={1980},
     journal={J. London Math. Soc. (2)},
      volume={22},
      number={1},
       pages={127\ndash 138},
}

\bib{Baum:2013kx}{article}{
      author={Baum, Paul},
      author={Guentner, Erik},
      author={Willett, Rufus},
       title={Expanders, exact crossed products, and the {B}aum-{C}onnes
  conjecture},
        date={2015},
     journal={Ann. ${K}$-theory},
      volume={1},
      number={2},
       pages={155\ndash 208},
}

\bib{Boca:1991aa}{article}{
      author={Boca, Florin},
       title={Free products of completely positive maps and spectral sets},
        date={1991},
     journal={J. Funct. Anal.},
      volume={97},
       pages={251\ndash 263},
}

\bib{Boca:1997aa}{incollection}{
      author={Boca, Florin~P.},
       title={A note on full free product {$C^*$}-algebras, lifting and
  quasidiagonality},
        date={1997},
   booktitle={Operator theory, operator algebras and related topics ({T}imi\c
  soara, 1996)},
   publisher={Theta Found., Bucharest},
       pages={51\ndash 63},
      review={\MR{1728412}},
}

\bib{Brodzki:2015kb}{unpublished}{
      author={Brodzki, Jacek},
      author={Cave, Christopher},
      author={Li, Kang},
       title={Exactness of locally compact second countable groups},
        date={2015},
        note={Preprint},
}

\bib{Brown:2011fk}{article}{
      author={Brown, Nathanial},
      author={Guentner, Erik},
       title={New ${C}^*$-completions of discrete groups and related spaces},
        date={2013},
     journal={Bull. London Math. Soc.},
      volume={45},
      number={6},
       pages={1181\ndash 1193},
}

\bib{Brown:2008qy}{book}{
      author={Brown, Nathanial},
      author={Ozawa, Narutaka},
       title={${C}^*$-algebras and finite-dimensional approximations},
      series={Graduate Studies in Mathematics},
   publisher={American Mathematical Society},
        date={2008},
      volume={88},
}

\bib{Buss:2015ty}{inproceedings}{
      author={Buss, Alcides},
      author={Echterhoff, Siegfried},
      author={Willett, Rufus},
       title={Exotic crossed products},
        date={2015},
   booktitle={Operator algebras and applications},
      editor={Carlsen, Toke~M.},
      editor={Larsen, Nadia~S.},
      editor={Neshveyev, Sergey},
      editor={Skau, Christian},
      series={The Abel Symposium},
       pages={61\ndash 108},
}

\bib{Buss:2014aa}{article}{
      author={Buss, Alcides},
      author={Echterhoff, Siegfried},
      author={Willett, Rufus},
       title={Exotic crossed produts and the {B}aum-{C}onnes conjecture},
        date={2015},
     journal={J. Reine Angew. Math.},
       pages={Online preprint: DOI 10.1515/crelle\ndash 2015\ndash 0061},
}

\bib{Chabert:2001hl}{article}{
      author={Chabert, J\'{e}r\^{o}me},
      author={Echterhoff, Siegfried},
       title={Permanence properties of the {B}aum-{C}onnes conjecture},
        date={2001},
     journal={Doc. Math.},
      volume={6},
       pages={127\ndash 183},
}

\bib{Chabert:2004fj}{article}{
      author={Chabert, J\'{e}r\^{o}me},
      author={Echterhoff, Siegfried},
      author={Oyono-Oyono, Herv\'{e}},
       title={Going-down functors, the {K}\"{u}nneth formula, and the
  {B}aum-{C}onnes conjecture},
        date={2004},
     journal={Geom. Funct. Anal.},
      volume={14},
      number={3},
       pages={491\ndash 528},
}

\bib{Echterhoff:2009jo}{incollection}{
      author={Echterhoff, Siegfried},
       title={Crossed products, the {M}ackey-{R}ieffel-{G}reen machine and
  application},
        date={2017},
   booktitle={${K}$-theory for group ${C^*}$-algebras and semigroup
  ${C^*}$-algebras},
      series={Oberwolfach Seminars},
   publisher={Birkh\"{a}user},
}

\bib{Green:1978aa}{article}{
      author={Green, Philip},
       title={The local structure of twisted covariance algebras},
        date={1978},
     journal={Acta Mathematica},
      volume={140},
      number={3-4},
       pages={191\ndash 250},
}

\bib{Gromov:2003gf}{article}{
      author={Gromov, Mikhael},
       title={Random walks in random groups},
        date={2003},
     journal={Geom. Funct. Anal.},
      volume={13},
      number={1},
       pages={73\ndash 146},
}

\bib{Higson:2002la}{article}{
      author={Higson, Nigel},
      author={Lafforgue, Vincent},
      author={Skandalis, Georges},
       title={Counterexamples to the {B}aum-{C}onnes conjecture},
        date={2002},
     journal={Geom. Funct. Anal.},
      volume={12},
       pages={330\ndash 354},
}

\bib{Kaliszewski:2012fr}{article}{
      author={Kaliszewski, S.},
      author={Landstad, Magnus},
      author={Quigg, John},
       title={Exotic group ${C}^*$-algebras in noncommutative duality},
        date={2013},
     journal={New York J. Math.},
      volume={19},
       pages={689\ndash 711},
}

\bib{Matsumura:2012aa}{article}{
      author={Matsumura, Masayoshi},
       title={A characterization of amenability of group actions on
  ${C}^*$-algebras},
        date={2014},
     journal={J. Operator Theory},
      volume={72},
      number={1},
       pages={41\ndash 47},
}

\bib{Osajda:2014ys}{unpublished}{
      author={Osajda, Damian},
       title={Small cancellation labellings of some infinite graphs and
  applications},
        date={2014},
        note={arXiv:1406.5015},
}

\bib{Phillips:2015aa}{article}{
      author={Phillips, N.~Christopher},
      author={S\o{}renson, Adam},
      author={Thiel, Hannes},
       title={Semiprojectivity with and without a group action},
        date={2015},
     journal={J. Funct. Anal.},
      volume={268},
       pages={929\ndash 973},
}

\bib{Sierakowski:2010aa}{article}{
      author={Sierakowski, Adam},
       title={The ideal structure of reduced crossed products},
        date={2010},
     journal={M\"{u}nster J. Math.},
      volume={3},
       pages={237\ndash 261},
}

\end{biblist}
\end{bibdiv}

\maketitle
\newpage
\appendix
\section{Erratum}
Unfortunately, there are two severe gaps in the main body of this paper.   We are very grateful to Yosuke Kubota who pointed these out  to us.  The first of these can be fixed under a mild additional hypothesis; we sketch how to do this below.  The second seems much more serious, and we are currently unable to fix it.  

There is also a third mistake in a side-remark that we record below; this third error does not affect the rest of the paper at all.  

The first problem appears in the proof of Proposition \ref{norm real}.  The first part of the proof of this proposition, dealing with the general case, seems correct. The problem arises with the additional argument dealing with the unital case.  Specifically, when trying to prove 
exactness of the left column of the diagram (5) on page 2051, we construct a map $E: \tilde{C}\rtimes_\mu G\to \tilde{C}\rtimes_\mu G$
and claim at the bottom of that page that it maps $C_c(G,J)$ into $C_c(G,I)$.  As pointed out to us by Yosuke Kubota, this is not true in general. 
However, under the mild extra assumption that the crossed-product functor $\rtimes_\mu$ satisfies the
ideal property the proof of Proposition 2.4 can be fixed; in particular, this applies when $\rtimes_\mu$ is the reduced crossed product, which is the most important special instance of Proposition 2.4.  The ideal property means that for every $G$-invariant ideal $I\subseteq A$ the crossed product $I\rtimes_\mu G$ injects into $A\rtimes_\mu G$. The following lemma is due to Narutaka Ozawa and  was communicated to us by Yosuke Kubota.

\begin{lemma}\label{lem-Oz}
Suppose that $\rtimes_\mu$ has  the ideal property.
Then every  short exact sequence of $G$-algebras   $0\to I\to A\to \C\to 0$
descends to a short exact sequence
$$0\to  I\rtimes_\mu G\to A\rtimes_\mu G\to \C\rtimes_\mu G\to 0.$$
\end{lemma}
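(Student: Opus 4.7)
The plan is to use the ideal property to identify $I\rtimes_\mu G$ with the closure of $C_c(G,I)$ inside $A\rtimes_\mu G$, and then to show that every $x\in \ker(\delta\rtimes G)$ lies in this closure. Injectivity of $I\rtimes_\mu G\to A\rtimes_\mu G$ is part of the ideal property, surjectivity of $\delta\rtimes G$ is automatic, and $I\rtimes_\mu G\subseteq \ker(\delta\rtimes G)$ is functorial, so this reverse inclusion is the only substantive point. I would split the argument into two cases depending on whether $A$ is unital.

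In the unital case, I would simply repair the argument attempted in the original paper. The unit inclusion $\sigma:\C\to A$, $\lambda\mapsto \lambda\cdot 1_A$, is an equivariant $*$-homomorphism splitting $\delta$, so by functoriality $\sigma\rtimes G$ splits $\delta\rtimes G$, and one may form the bounded idempotent $E:=\mathrm{id}-(\sigma\rtimes G)\circ(\delta\rtimes G)$ on $A\rtimes_\mu G$, whose image equals $\ker(\delta\rtimes G)$. On $C_c(G,A)$ one computes directly that $E(f)(g)=f(g)-\delta(f(g))\cdot 1_A$ lies in $I$, so $E$ maps $C_c(G,A)$ into $C_c(G,I)$; continuity and the ideal property then give $\mathrm{Im}(E)\subseteq I\rtimes_\mu G$. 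The discrepancy with the original paper's argument is that there the same idea was applied to a strictly larger kernel, whereas here the quotient is literally $\C$, so the computation stays inside $C_c(G,I)$ on the nose.

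The non-unital case is the main difficulty, since $\delta$ admits no $*$-homomorphic section. My plan is to replace the splitting by a bounded linear (non-multiplicative) section: fix any $a_0\in A$ with $\delta(a_0)=1$, which exists by surjectivity of $\delta$. Given $x\in \ker(\delta\rtimes G)$ and an approximation $f_n\in C_c(G,A)$ with $f_n\to x$, set
$$h_n(g):=f_n(g)-\delta(f_n(g))\cdot a_0.$$
Each $h_n$ lies in $C_c(G,I)$ by construction, and the error $f_n-h_n$ is the function $g\mapsto \delta(f_n(g))\cdot a_0$. The crux is to show this error vanishes in $\mu$-norm, whence $h_n\to x$ and $x\in I\rtimes_\mu G$.

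To obtain the norm bound I would embed into the crossed product of the unitization: applying $\rtimes_\mu$ to the equivariant unit inclusion $\sigma^+:\C\to A^+$, $\lambda\mapsto \lambda\cdot 1_{A^+}$, gives a contractive $*$-homomorphism $\tilde\sigma:\C\rtimes_\mu G\to A^+\rtimes_\mu G$. Setting $\phi_n:=\delta\rtimes G(f_n)\in \C\rtimes_\mu G$, a direct calculation identifies $f_n-h_n$ with the product $a_0\cdot \tilde\sigma(\phi_n)$ inside $A^+\rtimes_\mu G$; since $a_0\in A$ and $A$ is an ideal of $A^+$, this product lies in $A\rtimes_\mu G$ (the ideal property for $A\subseteq A^+$ identifying the norms), and its norm is bounded by $\|a_0\|\cdot \|\phi_n\|_{\C\rtimes_\mu G}$. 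Continuity of $\delta\rtimes G$ forces $\phi_n\to 0$, so $f_n-h_n\to 0$, $h_n\to x$, and $x$ lies in the closure of $C_c(G,I)$, which is $I\rtimes_\mu G$. I expect the main obstacle to be precisely this estimate: one must use the ideal property twice, once to embed $A\rtimes_\mu G$ isometrically into $A^+\rtimes_\mu G$ and once to ensure that left multiplication by the ideal element $a_0$ takes $\tilde\sigma(\C\rtimes_\mu G)$ back into $A\rtimes_\mu G$ with matching norm.
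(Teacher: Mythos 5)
Your proof is correct, but it takes a genuinely different route from the one in the paper. The published argument (due to Ozawa) treats unital and non-unital $A$ uniformly: the ideal property is used to descend the generalized homomorphism $\C\to \M(A)$, $\lambda\mapsto\lambda 1_{\M(A)}$, to a nondegenerate morphism $\C\rtimes_\mu G\to \M(A\rtimes_\mu G)$ via \cite[Lemma 3.3]{Buss:2014aa}, which then extends to $\phi\colon \M(\C\rtimes_\mu G)\to \M(A\rtimes_\mu G)$ splitting the quotient $q$ at the level of multiplier algebras; an element $x\in\ker q$ is then approximated by $x_{n,i}:=\bigl(x_n-\phi(q(x_n))\bigr)e_i$, where $(x_n)\subseteq C_c(G,A)$ converges to $x$ and $(e_i)$ is an approximate unit in $C_c(G,A)$ needed to pull the multiplier-level correction back into $C_c(G,I)$. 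You instead replace the multiplier-algebra splitting and the approximate unit by a fixed linear lift $a_0\in A$ of $1\in\C$, correcting pointwise to $h_n(g)=f_n(g)-\delta(f_n(g))a_0\in C_c(G,I)$ and bounding the error $g\mapsto\delta(f_n(g))a_0=a_0\cdot\tilde\sigma(\phi_n)$ inside $A^+\rtimes_\mu G$ by $\|a_0\|\,\|\phi_n\|$; here the ideal property enters through the isometric inclusion $A\rtimes_\mu G\hookrightarrow A^+\rtimes_\mu G$ (and through identifying $\overline{C_c(G,I)}$ with $I\rtimes_\mu G$, which both proofs need) rather than through the cited descent lemma. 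What your approach buys is a more self-contained and elementary argument: no appeal to nondegenerate morphisms into multiplier algebras and no iterated limit over an approximate unit, at the cost of the unitization detour and a case split that is in fact unnecessary, since your non-unital argument applies verbatim when $A$ is unital by taking $a_0=1_A$. Your diagnosis of why the original diagram-chase in Proposition \ref{norm real} failed --- the idempotent $E$ only lands in $C_c(G,I)$ when the quotient is literally $\C$, not for a general kernel $J$ with $J/I\cong\C$ --- also matches the erratum's account of that first gap.
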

\begin{proof}
Consider the generalized  homomorphism $\C\to M(A); \lambda \mapsto \lambda 1_{M(A)}$. By the ideal property, this  
descends to a nondegenerate morphism $\C\rtimes_\mu G\to M(A\rtimes_\mu G)$ (see \cite[Lemma 3.3]{Buss:2014aa})
and therefore  uniquely extends 
to a $*$-homomorphism $\phi: M(\C\rtimes_\mu G)\to M(A\rtimes_\mu G)$ which splits the extension 
$M(A\rtimes_\mu G)\to M(\C\rtimes_\mu G)$ of the 
quotient map $q: A\rtimes_\mu G\to \C\rtimes_\mu G$. 
Assume now that $x\in A\rtimes_\mu G$ is mapped to $0$ in $\C\rtimes_\mu G$ and let $(x_n)_n$ be a sequence
in $C_c(G,A)$ which converges to $x$ in norm. Let $(e_i)_{i\in I}$ be a bounded approximate unit  for 
$A\rtimes_\mu G$ which lies in $C_c(G,A)$.
Then $x_{n,i}:=\big(x_n-q\circ \phi(x_n)\big) e_i$ is an element of
 $C_c(G,A)$ such that $q(x_{n,i})=0$, and hence $x_{n,i}\in C_c(G,I)$  for all $(n,i)\in \N\times I$.
 On the other hand we  have $x_{n,i}\to x$ in norm,  hence  $x\in I\rtimes_\mu G$,
 which, by the ideal property,  coincides with the norm-closure of  $C_c(G,I)$ in $A\rtimes_\mu G$.
 \end{proof}

Now, if $\rtimes_\mu$ satisfies the ideal property,  the above lemma implies that the two left columns of diagram (5)  on p.~ 10
 are exact.
The proof of Proposition \ref{norm real} then follows as given.  As a result of this, Proposition \ref{group alg} should also assume the ideal property in its statement.  No other results in Sections \ref{sec-halfexact} or \ref{ex sec} seem to be affected by this mistake.  From Section \ref{sec-lift}, the following results are affected: the parts of Proposition \ref{ep ind}, Proposition \ref{welp ind}, and Corollary \ref{lp=red} dealing with the unital case.  All {of them}
hold if we assume the functor used has the ideal property.  No results from Section \ref{sec-BC} appear to be affected. \\

The second, and most severe, problem in the paper comes from Lemma \ref{always ext}, which was an important step in our proof of Proposition \ref{q mor com}.  Proposition \ref{q mor com} says that the smallest exact crossed-product  
$\rtimes_{\epsilon(\mu)}$ 
which dominates a Morita compatible functor $\rtimes_\mu$ with the ideal property is automatically Morita compatible.  We do not know if Proposition \ref{q mor com} is correct or not, but Lemma \ref{always ext} is definitely false.  The following counterexample to Lemma \ref{always ext}
resulted from a discussion with Timo Siebenand.

\begin{example}  Following a construction of Brown and Guentner, we construct a crossed-product functor for a group $G$
as the completion of $C_c(G,A)$ by the norm
$$\|f\|_\mu:=\sup\{\|\pi\rtimes U(f)\|: (\pi,U) \;\text{covariant rep. of $(A,G,\alpha)$ s.t. $U\prec \lambda_G$}\},$$
where ``$\prec$'' denotes weak containment of representations. 
Now, up to unitary equivalence, every nondegenerate covariant representation  $(\pi, U)$  of 
$(\K(L^2(G)), G, \Ad\lambda_G)$ is given as $(\pi, U)=(\id_{\K(L^2(G))}\otimes 1_{\h}, \lambda_G\otimes V)$  acting on  $L^2(G)\otimes \h$ for some Hilbert space $\h$.
Therefore $U\prec \lambda_G$ by Fell's trick. 
It follows that 
$$\K(L^2(G))\rtimes_{\Ad\lambda_G, \mu}G=\K(L^2(G))\rtimes_{\Ad\lambda_G, \max}G\cong \K(L^2(G))\otimes C^*_\max(G),$$
while  $\K(L^2(G))\otimes (\C\rtimes_\mu G)= \K(L^2(G))\otimes  C_r^*(G)$. 
Since $\rtimes_\mu$ satisfies the ideal property,  this example contradicts Lemma \ref{always ext} whenever $G$ is not amenable.
\end{example}

A version of Lemma 4.3 could still be true under the extra assumption that $\rtimes_\mu$ is Morita compatible itself.
Note that the functor $\rtimes_\mu$ of the above example is not. This would save Proposition \ref{q mor com}
and all its consequences. Unfortunately, so far  we were not able to give a proof of the lemma even under this extra assumption.
Therefore Proposition \ref{q mor com} together with Corollaries \ref{str mor com}, \ref{min ex mc}, \ref{corr fun}, and \ref{4.8}
are still open.  
In particular it is still open, whether the group algebra $C^*_{\epsilon_M}(G)=\C\rtimes_{\epsilon_M}G$ for the  
smallest {\em Morita compatible} exact crossed-product functor $\rtimes_{\epsilon_M}$ coincides with the 
reduced group algebra $C_r^*(G)$. 

The results in Section \ref{sec-lift} and Section  \ref{sec-BC}  appear unaffected by these problems. In particular, Theorem \ref{main} holds true 
with $\rtimes_{\epsilon_G}$ and $\rtimes_{\epsilon_H}$ denoting either the
smallest exact crossed-product functors or the smallest exact Morita compatible exact crossed-product functors for $G$ and $H$.\\

We finally would like to point out a third mistake, which appears in the paragraph before Theorem \ref{thm-exact}.
There we claim that for a $G$-algebra $A$ the canonical map 
$$\iota^{**}:A^{**}\to (A\rtimes_\mu G)^{**}$$
is always injective, where $A^{**}$ denotes  the double dual of $A$.
This is indeed true if $G$ is discrete, but it does  not hold in general. 
We did not use this statement anywhere in the paper.

To see a counterexample, let $G$ be any locally compact group acting on itself by translation.
Then $C_0(G)\rtimes G\cong \K(L^2(G))$, hence $(C_0(G)\rtimes G)^{**}=\B(L^2(G))$ and then  
$\iota^{**}$ maps $C_0(G)^{**}$ onto $C_0(G)''\cong L^{\infty}(G)\subseteq \B(L^2(G))$, where we represented 
$C_0(G)$ into $\B(L^2(G))$ via multiplication operators. But for general locally compact groups the 
map from $C_0(G)^{**}$ (which contains all characteristic functions $\chi_{\{g\}}$ of single points $g\in G$) to $L^{\infty}(G)$ is not injective.
{We should point out that if $\iota^{**}$ is not faithful on $A^{**}$, then it is also not 
faithful on the continuous part
$A^{**}_c=\{a\in A^{**}: g\mapsto \alpha^{**}_g(a)\;\text{norm continuous}\}$.
Indeed, since  $N:=\ker \iota^{**}$ is a von Neumann subalgebra of $A^{**}$ the unit $1_N$ of $N$ clearly lies in $A_c^{**}$.}

\section{The smallest exact Morita compatible crossed product}
In this section we want to show how the smallest exact Morita compatible crossed product $\rtimes_{\epsilon_M}$
for a locally compact group $G$,
which is used in the reformulation of the Baum-Connes conjecture by Baum, Guentner, and Willett in \cite{Baum:2013kx},
 can be constructed out of the smallest exact crossed product as described in this paper. 
Indeed, starting with any exact crossed product functor $\rtimes_\mu$ satisfying certain additional 
properties, we give a construction of a corresponding Morita compatible exact crossed product $\rtimes_{\mu_M}$
which will coincide with $\rtimes_\mu$ if this happened to be Morita compatible  already.
 If we  then start with the smallest exact crossed product $\rtimes_\eps$  
as constructed in this paper, we obtain the smallest Morita  compatible exact crossed product $\rtimes_{\eps_M}$.

Before we come to the construction, we first recall that for every unitary representation 
$u:G\to U(\h)$ of $G$ on some Hilbert space $\h$ we obtain canonical isomorphisms
\begin{equation}\label{Psi-max}
\Psi_{\max}^u: (A\rtimes_{\alpha,\max}G)\otimes \K(\h)\to (A\otimes\K(\h))\rtimes_{\alpha\otimes\Ad u, \max}G
\end{equation}
and 
\begin{equation}\label{Psi-red}
\Psi_{r}^u: (A\rtimes_{\alpha,r}G)\otimes \K(\h)\to (A\otimes\K(\h))\rtimes_{\alpha\otimes\Ad u, r}G
\end{equation}
which are both given on the level of continuous functions with compact supports by the map
$$\Psi_{alg}^u:  C_c(G,A)\odot \mathcal K\to C_c(G, A\otimes \K), \Psi_{alg}(f\otimes k)(g)=f(g)(1\otimes k u_g^*).$$

\begin{definition}\label{def-Morita}
We say that the crossed  product functor $\rtimes_\mu$  is {\em Morita compatible} if for any unitary representation 
$u:G\to U(\h)$ the dashed arrow in the diagram below can be filled in with a $*$-homomorphism $\Psi_{\mu}^u$ 
$$
\xymatrix{ (A\rtimes_{\alpha,\max}G)\otimes \K(\h) \ar[d] \ar[r]^-{\Psi_{\max}^u} & (A\otimes\K(\h))\rtimes_{\alpha\otimes\Ad u, \max}G \ar[d] \\
(A\rtimes_{\alpha,\mu}G)\otimes \K(\h) \ar@{-->}[r]^-{\Psi_{\mu}^u} &  (A\otimes\K(\h))\rtimes_{\alpha\otimes\Ad u, \mu}G }
$$
(here the vertical arrows are the canonical quotients).
\end{definition}

Now let $\rtimes_\mu$  be any crossed-product functor for a given locally compact group $G$
and let  $\alpha:G\to \Aut(A)$ be an action of $G$  on the $C^*$-algebra $A$. 
Let $\K=\K(L^2(G))$ and let
$$\Psi_\max^\lambda: (A\rtimes_{\alpha, \max}G)\otimes \mathcal K\to \big(A\otimes\K\big)\rtimes_{\alpha\otimes\Ad\lambda, \max}G$$
be as above, where $\lambda:G\to U(L^2(G))$ denotes the left regular representation of $G$. Let
$$Q_\mu: \big(A\otimes\K\big)\rtimes_{\alpha\otimes\Ad\lambda, \max}G\to \big(A\otimes\K\big)\rtimes_{\alpha\otimes\Ad\lambda, \mu}G$$
denote the quotient map and let 
$$j_{A\rtimes G}:A\rtimes_{\max}G\to \M(A\rtimes_{\max}G\otimes \K); j_{A\rtimes G}(x)=x\otimes 1$$
denote the canonical map. For $f\in C_c(G,A)$ we define
$$\|f\|_{\mu_M}:=\|Q_\mu\circ \Psi_{\max}^{\lambda}\circ j_{A\rtimes G}(f)\|.$$
The crossed product 
$ A\rtimes_{\alpha,\mu_M}G$ is then defined as the completion of $C_c(G,A)$ with respect to $\|\cdot\|_{\mu_M}$.
Of course, by construction, the $*$-homomorphism 
$$Q_\mu\circ \Psi\circ j_{A\rtimes G}:A\rtimes_{\max}G\to \M\big(\big(A\otimes\K\big)\rtimes_{\alpha\otimes\Ad\lambda, \mu}G\big)$$
factors through an embedding, say 
\begin{equation}\label{map}
\Phi_\mu^A: A\rtimes_{\alpha,\mu_M}G\to \M\big(\big(A\otimes\K\big)\rtimes_{\alpha\otimes\Ad\lambda, \mu}G\big).
\end{equation}

Let us have a slightly different look at our construction which might help to get a better feeling for it. For this recall that
any quotient $C$ of a tensor product $D\otimes \K$ for some $C^*$-algebra $D$ must be of the form $D/J\otimes \K$. 
The ideal $J$ is then given as the kernel of $q\circ j_D:D\to \M(C)$, where $j_D:D\to \M(D\otimes \K)$ is the canonical inclusion and
$q:D\otimes  \K\to C$ denotes the quotient map.

Thus, if we consider  the composition 
$$Q_\mu\circ \Psi_{\max}^\lambda: (A\rtimes_{\max}G)\otimes \K\to (A\otimes\K)\otimes_{\alpha\otimes\Ad\lambda, \mu}G$$
let $J:=\ker(Q_\mu\circ \Psi)\subseteq  (A\rtimes_{\max}G)\otimes \K$. It then follows from our construction that
$$
 \big( (A\rtimes_{\max}G)\otimes \K\big)/J\cong (A\rtimes_{\alpha,\mu_M}G)\otimes \K.
$$
and we therefore get an isomorphism
\begin{equation}\label{eq-muM}
(A\otimes\K)\rtimes_{\alpha\otimes\lambda,\mu}G \cong (A\rtimes_{\alpha,\mu_M}G)\otimes \K.
 \end{equation}
 which is natural in $(A,\alpha)$.

\begin{lemma}\label{lem-functor}
$(A,\alpha)\mapsto A\rtimes_{\alpha,\mu_M}G$ is a crossed-product functor for $G$ and if $\rtimes_\mu$  is exact, then so is 
$\rtimes_{\mu_M}$.
\end{lemma}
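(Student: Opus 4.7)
The plan is to exploit the natural isomorphism (\ref{eq-muM}), namely $(A\otimes\K)\rtimes_{\alpha\otimes\Ad\lambda,\mu}G \cong (A\rtimes_{\alpha,\mu_M}G)\otimes\K$, together with the fact that tensoring by $\K$ is a faithful and exact functor on $C^*$-algebras, in order to transfer the relevant properties of $\rtimes_\mu$ to $\rtimes_{\mu_M}$. First I would verify that $\|\cdot\|_{\mu_M}$ is a $C^*$-norm on $C_c(G,A)$ satisfying $\|f\|_r\leq \|f\|_{\mu_M}\leq \|f\|_{\max}$. The upper bound is immediate from the definition, since the composition $Q_\mu\circ\Psi_{\max}^\lambda\circ j_{A\rtimes G}$ is contractive on $A\rtimes_{\max}G$. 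For the lower bound, the fact that $\rtimes_\mu$ dominates $\rtimes_r$ yields a surjection $(A\otimes\K)\rtimes_\mu G\onto (A\otimes\K)\rtimes_r G$; combining with (\ref{eq-muM}) on the left and the classical Morita compatibility $(A\otimes\K)\rtimes_r G\cong (A\rtimes_r G)\otimes\K$ on the right produces a surjection $(A\rtimes_{\mu_M}G)\otimes\K\onto (A\rtimes_r G)\otimes\K$, from which the required quotient $A\rtimes_{\mu_M}G\onto A\rtimes_r G$ is extracted using faithfulness of tensoring with $\K$.

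For functoriality, given an equivariant $*$-homomorphism $\phi\colon A\to B$, the naturality of $\Psi_{\max}^\lambda$ in $A$, combined with functoriality of $\rtimes_\mu$ applied to $\phi\otimes\id_\K$, produces a commutative diagram
\begin{equation*}
\begin{CD}
A\rtimes_{\max}G @>Q_\mu\circ\Psi_{\max}^\lambda\circ j_{A\rtimes G}>> \M((A\otimes\K)\rtimes_\mu G) \\
@V\phi\rtimes_{\max}GVV @VV(\phi\otimes\id_\K)\rtimes_\mu GV \\
B\rtimes_{\max}G @>Q_\mu\circ\Psi_{\max}^\lambda\circ j_{B\rtimes G}>> \M((B\otimes\K)\rtimes_\mu G).
\end{CD}
\end{equation*}
Since by construction the horizontal arrows are precisely the maps whose restrictions to $C_c(G,A)$ and $C_c(G,B)$ define the respective $\mu_M$-norms, it follows that $\phi\rtimes G$ sends $\mu_M$-null elements to $\mu_M$-null elements and therefore descends to a $*$-homomorphism $\phi\rtimes_{\mu_M}G\colon A\rtimes_{\mu_M}G\to B\rtimes_{\mu_M}G$ extending the algebraic descent on $C_c(G,A)$; functoriality of the assignment then follows from the corresponding property at the maximal level and uniqueness of continuous extensions from dense subalgebras.

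Suppose now that $\rtimes_\mu$ is exact and consider an equivariant short exact sequence $0\to I\to A\to A/I\to 0$. Tensoring with the nuclear $C^*$-algebra $\K$ yields the exact sequence $0\to I\otimes\K\to A\otimes\K\to (A/I)\otimes\K\to 0$; applying the exact functor $\rtimes_\mu$ and invoking (\ref{eq-muM}) produces the exact sequence
\begin{equation*}
0\to (I\rtimes_{\mu_M}G)\otimes\K\to (A\rtimes_{\mu_M}G)\otimes\K\to ((A/I)\rtimes_{\mu_M}G)\otimes\K\to 0.
\end{equation*}
Since for any $*$-homomorphism $\phi\colon B\to B'$ one has $\ker(\phi\otimes\id_\K)=\ker(\phi)\otimes\K$ and $\operatorname{im}(\phi\otimes\id_\K)=\operatorname{im}(\phi)\otimes\K$, with the correspondence $J\mapsto J\otimes\K$ between ideals being a bijection onto the ideals of that form, exactness of the tensored sequence descends to exactness of $0\to I\rtimes_{\mu_M}G\to A\rtimes_{\mu_M}G\to (A/I)\rtimes_{\mu_M}G\to 0$. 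The main obstacle I anticipate is purely bookkeeping: verifying that the induced maps on $(A\rtimes_{\mu_M}G)\otimes\K$ correspond under (\ref{eq-muM}) to maps of the form $(\phi\otimes\id_\K)\rtimes_\mu G$ on $(A\otimes\K)\rtimes_\mu G$. This amounts to a formal naturality argument tracing through the explicit formula for $\Psi_{\alg}^\lambda$ on elementary tensors, and is technical rather than conceptually difficult.
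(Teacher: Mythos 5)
Your proof is correct and follows essentially the same route as the paper: functoriality is obtained from the commutative square relating $\Phi_\mu^A$, $\Phi_\mu^B$ and $(\phi\otimes\id_\K)\rtimes_\mu G$, and exactness is transferred through the natural isomorphism \eqref{eq-muM} after tensoring the sequence with the nuclear algebra $\K$. The only additions are your explicit check that $\|\cdot\|_r\leq\|\cdot\|_{\mu_M}\leq\|\cdot\|_{\max}$ (which the paper leaves implicit) and the spelled-out descent from the $\otimes\,\K$ exact sequence, both of which are fine.
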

\begin{proof} Let  $\Phi:(A,\alpha)\to (B,\beta)$ be a $G$-equivariant $*$-homomorphism. We then obtain a $G$-equivariant $*$-homomorphism
$\Phi\otimes\id_{\K}: A\otimes \K\to B\otimes  \K$ which then descends to a $*$-homomorphism
$$(\Phi\otimes\K)\rtimes_\mu  G: (A\otimes  \K)\rtimes_{\alpha\otimes\Ad\lambda, \mu}G\to (B\otimes  \K)\rtimes_{\beta\otimes\Ad\lambda, \mu}G.$$
It is then easy to check that  we have a commutative diagram of $*$-homomorphisms
$$
\begin{CD} A\rtimes_{\max}G  @>\Phi\rtimes G>> B\rtimes_{\max} G\\
@V \Phi_\mu^AVV  @VV\Phi_\mu^BV\\
(A\otimes  \K)\rtimes_{\alpha\otimes\Ad\lambda, \mu}G @>>(\Phi\otimes\K)\rtimes_\mu  G> (B\otimes  \K)\rtimes_{\beta\otimes\Ad\lambda, \mu}G
\end{CD}
$$
with $\Phi_\mu^A, \Phi_\mu^B$ as in (\ref{map}) viewed as maps on the maximal crossed products.
It follows that every element in the kernel of $\Phi_\mu^A$ is mapped into the kernel of $\Phi_\mu^B$, and therefore
the homomorphism $\Phi\rtimes G:A\rtimes_{\max}G\to B\rtimes_{\max}G$ factors through a $*$-homomorphism 
$\Phi\rtimes_{\mu_M} G:A\rtimes_{\mu_M}G\to B\rtimes_{\mu_M}G$ which is given on the level of $C_c(G,A)$ by $f\mapsto \Phi\circ f$.
This proves that $\rtimes_{\mu_M}$ is a crossed-product functor.

Assume  now that $\rtimes_\mu$ is exact and let
$0\to I\to A\to A/I\to 0$ be a short  exact sequence of $G$-algebras. By nuclearity of  $\K$ and exactness of $\rtimes_\mu$ we get a short exact sequence
$$0\to (I\otimes\K)\rtimes_\mu G\to (A\otimes\K)\rtimes_\mu G\to (A/I\otimes\K)\rtimes_\mu G\to 0$$
which by (\ref{eq-muM}) translates  to the short  exact sequence
$$  0\to (I\rtimes_{\mu_M}G)\otimes \K \to (A\rtimes_{\mu_M}G)\otimes \K \to (A/I\rtimes_{\mu_M}G)\otimes \K \to 0.$$
But this implies  that the sequence
$$0\to I\rtimes_{\mu_M}G \to A\rtimes_{\mu_M}G \to A/I\rtimes_{\mu_M}G$$
is exact as well.
\end{proof}

\begin{proposition}\label{prop-Morita}
Suppose that $\rtimes_\mu$  is a  crossed-product functor which satisfies the following  conditions:
 For each $G$-algebra $A$ and any algebra $\K=\K(\h)$ of compact operators on some Hilbert space $\h$  the  
map 
$$\Psi_{alg}^{1_\h}: C_c(G,A)\odot \K\to C_c(G, A\otimes \K); \Psi_{alg}(f\otimes k)(g)=f(g)\otimes k$$
extends to an isomorphism 
\begin{equation}\label{Psi-id}
(A\rtimes_{\alpha,\mu} G)\otimes \K\cong (A\otimes\K)\rtimes_{\alpha\otimes\id_{\K},\mu}G.
\end{equation}
Then the functor $\rtimes_{\mu_M}$ constructed above is Morita compatible.
\end{proposition}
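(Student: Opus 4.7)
The plan is to show that for every unitary representation $u\colon G\to U(\h)$ and every $G$-algebra $(A,\alpha)$, the canonical isomorphism
$$\Psi^u_{\max}\colon (A\rtimes_{\max}G)\otimes\K(\h)\xrightarrow{\cong}(A\otimes\K(\h))\rtimes_{\alpha\otimes\Ad u,\max}G$$
descends to a (bijective) $*$\nb-homomorphism between the corresponding $\mu_M$\nb-crossed products. The idea is to tensor both sides with $\K(L^2(G))$ and assemble a natural $*$\nb-isomorphism $\Theta$ between the resulting algebras out of three ingredients: the defining property \eqref{eq-muM} of $\mu_M$ applied to both $(A,\alpha)$ and $(A\otimes\K(\h),\alpha\otimes\Ad u)$; Fell's absorption $u\otimes\lambda\cong 1_\h\otimes\lambda$ on $\h\otimes L^2(G)$; and the hypothesis \eqref{Psi-id} applied with a trivially acted on $\K(\h)$\nb-factor. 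Tracing through $\Theta$ on dense subalgebras will then show that $\Psi^u_{\max}$ descends to an isometry, yielding Morita compatibility of $\rtimes_{\mu_M}$.

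Explicitly, \eqref{eq-muM} applied to $(A\otimes\K(\h),\alpha\otimes\Ad u)$ gives
$$((A\otimes\K(\h))\rtimes_{\alpha\otimes\Ad u,\mu_M}G)\otimes\K(L^2(G))\cong(A\otimes\K(\h)\otimes\K(L^2(G)))\rtimes_{\alpha\otimes\Ad u\otimes\Ad\lambda,\mu}G.$$
If $V\in U(\h\otimes L^2(G))$ is a Fell unitary satisfying $V^*(u_g\otimes\lambda_g)V=1_\h\otimes\lambda_g$, then $\id_A\otimes\Ad V^*$ is a $G$\nb-equivariant $*$\nb-isomorphism identifying the crossed product on the right with $(A\otimes\K(\h)\otimes\K(L^2(G)))\rtimes_{\alpha\otimes\id_{\K(\h)}\otimes\Ad\lambda,\mu}G$. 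The hypothesis \eqref{Psi-id} applied to $B=A\otimes\K(L^2(G))$ (with the trivial action on the $\K(\h)$\nb-factor) then identifies this with $((A\otimes\K(L^2(G)))\rtimes_{\alpha\otimes\Ad\lambda,\mu}G)\otimes\K(\h)$, and a final use of \eqref{eq-muM} for $(A,\alpha)$ converts this into $(A\rtimes_{\mu_M}G)\otimes\K(L^2(G))\otimes\K(\h)$. Composing (and swapping the outer $\K$\nb-factors) yields the natural $*$\nb-isomorphism
$$\Theta\colon ((A\otimes\K(\h))\rtimes_{\alpha\otimes\Ad u,\mu_M}G)\otimes\K(L^2(G))\xrightarrow{\cong}(A\rtimes_{\mu_M}G)\otimes\K(\h)\otimes\K(L^2(G)).$$

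Now fix a nonzero rank-one projection $e\in\K(L^2(G))$; tensoring with $e$ gives an isometric embedding $B\hookrightarrow B\otimes\K(L^2(G))$ for any $C^*$\nb-algebra $B$. For $y=f\otimes k$ with $f\in C_c(G,A)$ and $k\in\K(\h)$, a step-by-step trace through the four identifications making up $\Theta$ gives
$$\Theta(\Psi^u_{\max}(y)\otimes e)= f\otimes V^*(k\otimes e)V\in(A\rtimes_{\mu_M}G)\otimes\K(\h\otimes L^2(G)),$$
and conjugation by the unitary multiplier $1_{A\rtimes_{\mu_M}G}\otimes V$ sends this to $f\otimes k\otimes e=y\otimes e$. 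Hence $\|\Theta(\Psi^u_{\max}(y)\otimes e)\|=\|y\otimes e\|=\|y\|_{(A\rtimes_{\mu_M}G)\otimes\K(\h)}$, and since $\Theta$ and the tensor-with-$e$ embedding are both isometric we conclude $\|\Psi^u_{\max}(y)\|_{(A\otimes\K(\h))\rtimes_{\mu_M}G}=\|y\|_{(A\rtimes_{\mu_M}G)\otimes\K(\h)}$; the same identity extends by linearity to all $y\in C_c(G,A)\odot\K(\h)$. Thus $\Psi^u_{\max}$ descends to an isometric $*$\nb-homomorphism $\Psi^u_{\mu_M}$, which is bijective because $\Psi^u_{\max}$ has dense image, proving that $\rtimes_{\mu_M}$ is Morita compatible. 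The main obstacle is the careful bookkeeping required in the trace through $\Theta$: the Fell-absorption step twists $\Psi^u_{\max}(y)\otimes e$ by the unitary $V$, and the crucial point that rescues the norm identity is that this twist is implemented by the unitary multiplier $1\otimes V$ and therefore preserves norms.
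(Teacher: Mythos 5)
Your proof is correct and follows essentially the same route as the paper's: both arguments are assembled from the two instances of the defining isomorphism \eqref{eq-muM} (for $A$ and for $A\otimes\K(\h)$), Fell's trick, and the hypothesis \eqref{Psi-id}, differing only in the order of composition and in that you make the cancellation of the auxiliary $\K(L^2(G))$ factor explicit via a rank-one projection where the paper simply observes that its chain of isomorphisms extends $\Psi_{\alg}^u\otimes\id$. (One cosmetic point: it is the operator identity $\Theta(\Psi^u_{\max}(y)\otimes e)=(1\otimes V^*)(y\otimes e)(1\otimes V)$ that extends by linearity, with the norm identity then following on all of $C_c(G,A)\odot\K(\h)$ --- norm identities themselves are not linear.)
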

\begin{proof} 
We need to show that for any unitary representation $u:G\to U(\h)$ the $*$-isomorphism 
$$
\Psi_{\max}^u: (A\rtimes_{\alpha,\max}G)\otimes \K(\h)\to (A\otimes\K(\h))\rtimes_{\alpha\otimes\Ad u, \max}G
$$
of (\ref{Psi-max}) factors through an isomorphism
$$
\Psi_{\mu_M}^u: (A\rtimes_{\alpha,\mu_M}G)\otimes \K(\h)\to (A\otimes\K(\h))\rtimes_{\alpha\otimes\Ad u, \mu_M}G.
$$
In order to see this we first  recall the isomorphism
$$
A\rtimes_{\alpha,\mu_M}G\otimes \K(L^2(G))\cong (A\otimes\K(L^2(G)))\rtimes_{\alpha\otimes\Ad \lambda,\mu}G.
$$
which is given on the level of functions by $\Psi_{alg}^\lambda$. 
Composing this with the isomorphism in (\ref{Psi-id}) with $A\rtimes_{\alpha,\mu} G$ replaced by 
$(A\otimes\K(L^2(G)))\rtimes_{\alpha\otimes\Ad \lambda,\mu}G$ we obtain an isomorphism 
\begin{equation}\label{iso1}
\begin{split}
(A\rtimes_{\alpha,\mu_M}G)\otimes  &\K(L^2(G))\otimes \K(\h)\\
&{\cong} \big(A\otimes\K(L^2(G))\otimes \K(\h)\big)\rtimes_{\alpha\otimes\Ad (\lambda\otimes 1_h),\mu}G
\end{split}
\end{equation}
which is given on the level of functions by the map
$$\Psi_{alg}^{\lambda\otimes 1_\h}: C_c(G,A)\otimes \K(L^2(G)\otimes \h)\to C_c(G, A\otimes\K(L^2(G)\otimes \h)).$$
Let $W\in U(L^2(G)\otimes \h)$ be the unitary given by $W(\xi)(g)=u_g\xi(g)$ for $\xi\in L^2(G,\h)\cong L^2(G)\otimes\h$.
Then a short computation shows that 
$$W(\lambda_g\otimes 1_\h)W^*=\lambda_g\otimes u_g$$
for all $g\in G$ (this is known as Fell's trick).  We then get the following commutative diagram of maps
$$
\begin{CD}
C_c(G,A)\otimes \K(L^2(G)\otimes \h) @>\Psi_{alg}^{\lambda\otimes 1_\h}>> C_0(G, A\otimes\K(L^2(G)\otimes \h))\\
@V\id\otimes \Ad W VV   @V \Ad W\rtimes_{alg}G VV\\
C_c(G,A)\otimes \K(L^2(G)\otimes \h) @>\Psi_{alg}^{\lambda\otimes u}>> C_0(G, A\otimes\K(L^2(G)\otimes \h))
\end{CD}
$$
which  combines with (\ref{iso1}) to show that the bottom arrow extends to an isomorphism
\begin{equation}\label{iso2}
\begin{split}
(A\rtimes_{\alpha,\mu_M}G)\otimes  &\K(L^2(G))\otimes \K(\h)\\
&{\cong} \big(A\otimes\K(L^2(G))\otimes \K(\h)\big)\rtimes_{\alpha\otimes\Ad (\lambda\otimes u),\mu}G
\end{split}
\end{equation}
Applying the flip $\Sigma:\K(L^2(G))\otimes \K(\h)\to \K(\h)\otimes \K(L^2(G))$ together with the isomorphism
(\ref{eq-muM}) applied to the $G$-algebra $(A\otimes \K(\h), \alpha\otimes \Ad u)$ then gives 
a chain of isomorphisms
\begin{align*}
(A\rtimes_{\alpha,\mu_M}G)&\otimes\K(\h)\otimes\K(L^2(G))\\
&\stackrel{\id\otimes \Sigma}\cong
(A\rtimes_{\alpha,\mu_M}G)\otimes\K(L^2(G))\otimes\K(L^2(\h))\\
&\stackrel{(\ref{iso2})}{\cong} (A\otimes\K(L^2(G))\otimes \K(\h))\rtimes_{\alpha\otimes\Ad(\lambda\otimes u),\mu}G\\
&\stackrel{\Sigma\rtimes G}{\cong} (A\otimes\K(\h)\otimes \K(L^2(G)))\rtimes_{\alpha\otimes\Ad(u\otimes \lambda),\mu}G\\
&\stackrel{(\ref{eq-muM})}{\cong} \big((A\otimes\K(\h))\rtimes_{\alpha\otimes  \Ad u, \mu_M}G\big)\otimes \K(L^2(G)),
\end{align*}
which extends the map $\Psi_{\alg}^u\otimes\id_{\K(\h)}$.
It follows that 
$$\Psi_{alg}^u:C_c(G,A)\otimes \K(\h)\to C_c(G, A\otimes \K(\h))$$ extends to an isomorphism
$$(A\rtimes_{\alpha,\mu_M}G)\otimes\K(\h)\cong (A\otimes\K(\h))\rtimes_{\alpha\otimes  \Ad u, \mu_M}G$$
and the result follows.
\end{proof}

\begin{lemma}\label{lem-exact-Morita}
The smallest exact crossed-product functor $\rtimes_\epsilon$ satisfies condition (\ref{Psi-id}) of 
Proposition \ref{prop-Morita}
\end{lemma}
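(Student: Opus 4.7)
The plan is to construct a canonical contractive $*$-homomorphism $\Psi_\epsilon\colon (A\rtimes_\epsilon G)\otimes\K\to (A\otimes\K)\rtimes_\epsilon G$ extending $\Psi_{alg}^{1_\h}$ and then to show it is an isometry (hence an isomorphism, since its image is dense) by computing both norms as suprema indexed by the equivariant quotients of $A$ used in the definition of $\rtimes_{\epsilon(r)}$.

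For the construction of $\Psi_\epsilon$, I would exploit that $\rtimes_\epsilon$ is exact and therefore has the ideal property. Because $\K$ carries the trivial $G$-action, the map $A\to\M(A\otimes\K)$, $a\mapsto a\otimes 1_\K$, is nondegenerate and $G$-equivariant, and so by \cite[Lemma 3.3]{Buss:2014aa} it extends to a nondegenerate $*$-homomorphism $A\rtimes_\epsilon G\to\M((A\otimes\K)\rtimes_\epsilon G)$. The embedding $\K\to\M(A\otimes\K)$, $k\mapsto 1\otimes k$, has $G$-invariant image and so lifts canonically to a $*$-homomorphism $\K\to\M((A\otimes\K)\rtimes_\epsilon G)$ whose image commutes with that of the first map. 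By nuclearity of $\K$ these two combine into a $*$-homomorphism $\Psi_\epsilon\colon (A\rtimes_\epsilon G)\otimes\K\to\M((A\otimes\K)\rtimes_\epsilon G)$, and checking on elementary tensors $f\otimes k$ with $f\in C_c(G,A)$ shows that its image actually lies in $(A\otimes\K)\rtimes_\epsilon G$ and agrees there with $\Psi_{alg}^{1_\h}$.

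To prove $\Psi_\epsilon$ is isometric, I would compute both norms on a typical element $F=\sum_i f_i\otimes k_i\in C_c(G,A)\odot\K$. By definition of $\rtimes_\epsilon=\rtimes_{\epsilon(r)}$, the norm $\|F\|_{(A\otimes\K)\rtimes_\epsilon G}$ is a supremum over equivariant surjections $\tilde\pi$ onto $A\otimes\K$; restricting this supremum to those of the form $\pi\otimes\id_\K\colon C\otimes\K\to A\otimes\K$ arising from equivariant surjections $\pi\colon C\to A$, and using Morita compatibility of $\rtimes_r$ together with nuclearity of $\K$ to identify
\[\frac{(C\otimes\K)\rtimes_r G}{(I\otimes\K)\rtimes_{r,C\otimes\K}G}\cong\frac{C\rtimes_r G}{I\rtimes_{r,C}G}\otimes\K=(A\rtimes_\pi G)\otimes\K,\]
I would obtain $\|F\|_{(A\otimes\K)\rtimes_\epsilon G}\geq\sup_\pi\|F\|_{(A\rtimes_\pi G)\otimes\K}$. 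On the other hand, the definition $\|x\|_\epsilon=\sup_\pi\|x\|_\pi$ gives an isometric embedding $A\rtimes_\epsilon G\hookrightarrow\prod_\pi A\rtimes_\pi G$; tensoring with $\K$ (which preserves injections by nuclearity) and composing with the canonical isometric inclusion $(\prod_\pi B_\pi)\otimes\K\hookrightarrow\prod_\pi(B_\pi\otimes\K)$ yields $\|F\|_{(A\rtimes_\epsilon G)\otimes\K}=\sup_\pi\|F\|_{(A\rtimes_\pi G)\otimes\K}$. Combining these two computations gives $\|F\|_{(A\otimes\K)\rtimes_\epsilon G}\geq\|F\|_{(A\rtimes_\epsilon G)\otimes\K}$, and the reverse inequality follows from $\Psi_\epsilon$ being contractive; hence $\Psi_\epsilon$ is isometric on $C_c(G,A)\odot\K$ and extends to the desired isomorphism.

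The step I expect to be most delicate is the identification $\|F\|_{(A\rtimes_\epsilon G)\otimes\K}=\sup_\pi\|F\|_{(A\rtimes_\pi G)\otimes\K}$: although it is essentially a consequence of the minimality of $\rtimes_\epsilon$ together with the fact that the minimal tensor product behaves well with respect to products of $C^*$-algebras, some care is needed to verify that the composed embedding $(A\rtimes_\epsilon G)\otimes\K\hookrightarrow\prod_\pi(A\rtimes_\pi G)\otimes\K$ remains isometric on all elements of the tensor product and not just on elementary tensors.
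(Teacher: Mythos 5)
Your argument is correct, and its first half coincides with the paper's: the construction of $\Psi_\epsilon$ from the ideal property of $\rtimes_\epsilon$, \cite[Lemma 3.3]{Buss:2014aa}, the commuting copies of $A\rtimes_\epsilon G$ and $\K$ inside $\M((A\otimes\K)\rtimes_\epsilon G)$, and nuclearity of $\K$ is precisely the content of the proof of Lemma \ref{always ext}, which is what the paper cites (noting it survives the erratum because $u=1_\h$, so $\delta_g$ genuinely commutes with $1\otimes k$). Where you diverge is the inverse direction. The paper does no norm computation: it cites the proof of Proposition \ref{q mor com}, which chooses a \emph{single} equivariant surjection $\pi\colon C\to A$ attaining the $\epsilon$-norm (Proposition \ref{norm real}, whose general non-unital case is unaffected by the erratum), tensors the extension $0\to I\to C\to A\to 0$ with the trivially-acting $\K$, and uses Morita compatibility of $\rtimes_r$ plus exactness of $-\otimes\K$ to build an explicit surjection $\Phi_\epsilon\colon(A\otimes\K)\rtimes_\epsilon G\to(A\rtimes_\epsilon G)\otimes\K$ that is inverse to $\Psi_\epsilon$ on the dense subalgebra. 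You instead keep the entire family of quotients and show $\Psi_\epsilon$ is isometric by identifying both norms with $\sup_\pi\|F\|_{(A\rtimes_\pi G)\otimes\K}$; the one additional ingredient you need (and correctly flag as the delicate point) is that the embedding $(A\rtimes_\epsilon G)\otimes\K\hookrightarrow\prod_\pi\big((A\rtimes_\pi G)\otimes\K\big)$ is isometric, which does hold: minimal tensoring with the nuclear $\K$ preserves injections, and $(\prod_\pi B_\pi)\otimes\K\to\prod_\pi(B_\pi\otimes\K)$ is isometric on the algebraic tensor product, as one sees in a block-diagonal faithful representation. The trade-off is that your route avoids Proposition \ref{norm real} entirely at the cost of this product-embedding lemma, while the paper's route is shorter because the single realizing quotient makes the identification $(A\rtimes_\pi G)\otimes\K=(A\rtimes_\epsilon G)\otimes\K$ immediate; the two essential inputs (ideal property for one inequality, Morita compatibility of $\rtimes_r$ for trivially-acting $\K$ together with exactness of $-\otimes\K$ for the other) are the same in both.
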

\begin{proof}
We need to show that the map 
$$\Psi_{alg}: C_c(G,A)\odot \K\to C_c(G, A\otimes \K); \Psi_{alg}(f\otimes k)(g)=f(g)\otimes k$$
extends to an isomorphism
$$\Psi_{\eps}:(A\rtimes_{\alpha,\epsilon} G)\otimes \K\stackrel{\cong}{\to} (A\otimes\K)\rtimes_{\alpha\otimes\id_{\K},\epsilon}G.$$
The proof of Proposition \ref{q mor com} shows that the inverse 
$$\Phi_{\max}:(A\otimes\K)\rtimes_{\alpha\otimes\id_{\K},\max}G\to (A\rtimes_{\alpha,\max} G)\otimes \K$$
of $\Psi_{\max}$ factors through a surjective $*$-homomorphism
$$\Phi_{\eps}:(A\otimes\K)\rtimes_{\alpha\otimes\id_{\K},\eps}G\to (A\rtimes_{\alpha,\eps} G)\otimes \K.$$
Thus it suffices to show that $\Psi_{\max}$ factors through a $*$-homomorphism. 
But this follows from the proof of Lemma \ref{always ext}, which still holds in the case where $u=1_{\h}$.
\end{proof}

Thus Proposition \ref{prop-Morita} can be applied to the smallest exact crossed product $\rtimes_\epsilon$. We then get

\begin{corollary}\label{cor-exact-Morita}
Let $\rtimes_\eps$ denote the smallest exact crossed product  functor for the group $G$.
Then $\rtimes_{\eps_M}$ is the smallest exact Morita compatible crossed product  functor for $G$.
\end{corollary}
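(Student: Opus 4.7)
The plan is to verify three claims about $\rtimes_{\eps_M}$: exactness, Morita compatibility, and minimality among exact Morita compatible crossed-product functors. Most of the technical work has been absorbed into upstream results in the erratum, so each step should be short.

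First, exactness of $\rtimes_{\eps_M}$ will be immediate from Lemma \ref{lem-functor} applied with $\mu=\eps$, since $\rtimes_\eps$ is itself exact. Second, Lemma \ref{lem-exact-Morita} shows that $\rtimes_\eps$ satisfies the hypothesis (\ref{Psi-id}) of Proposition \ref{prop-Morita}, and so the proposition will deliver Morita compatibility of $\rtimes_{\eps_M}$.

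The substantive step is minimality. I would take an arbitrary exact Morita compatible crossed-product functor $\rtimes_\nu$, fix a $G$-algebra $(A,\alpha)$, and aim to show $\|f\|_{\eps_M}\le \|f\|_\nu$ for all $f\in C_c(G,A)$. Since $\rtimes_\eps$ is the smallest exact crossed-product functor (Corollary \ref{min exact cor}), the identity on $C_c(G, A\otimes\K(L^2(G)))$ extends to a surjective $*$-homomorphism
$$
(A\otimes\K(L^2(G)))\rtimes_{\alpha\otimes\Ad\lambda,\nu} G \twoheadrightarrow (A\otimes\K(L^2(G)))\rtimes_{\alpha\otimes\Ad\lambda,\eps} G.
$$
Morita compatibility of $\rtimes_\nu$ identifies the left-hand algebra with $(A\rtimes_\nu G)\otimes\K(L^2(G))$, while the defining isomorphism (\ref{eq-muM}) for $\rtimes_{\eps_M}$ identifies the right-hand algebra with $(A\rtimes_{\eps_M} G)\otimes\K(L^2(G))$. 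Composing, I would obtain a surjection
$$
(A\rtimes_\nu G)\otimes\K(L^2(G)) \twoheadrightarrow (A\rtimes_{\eps_M} G)\otimes\K(L^2(G))
$$
which, by construction of both identifications via $\Psi_{\max}^\lambda$, restricts to the identity on the dense subalgebra $C_c(G,A)\odot\K(L^2(G))$. Evaluating on elementary tensors $f\otimes p$ with $f\in C_c(G,A)$ and $p$ a rank-one projection in $\K(L^2(G))$ and using that quotient maps are contractive will then give $\|f\|_{\eps_M}\le\|f\|_\nu$, proving minimality.

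I do not expect any serious obstacle here: the heavy lifting---the tensor-product-with-compacts descent---was already accomplished by Lemma \ref{lem-exact-Morita} and Proposition \ref{prop-Morita}, and the minimality argument is a straightforward diagram chase on top of Corollary \ref{min exact cor}. The only point requiring care will be to check that the three identifications above are all compatible, so that the composed surjection is indeed the identity on $C_c(G,A)\odot\K(L^2(G))$; this should be routine since all three maps are constructed from (compatible quotients of) the single algebraic map $\Psi_{\max}^\lambda$.
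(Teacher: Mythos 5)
Your proposal is correct and follows essentially the same route as the paper: exactness via Lemma \ref{lem-functor}, Morita compatibility via Lemma \ref{lem-exact-Morita} together with Proposition \ref{prop-Morita}, and minimality by composing the quotient map $(A\otimes\K(L^2(G)))\rtimes_\nu G\twoheadrightarrow (A\otimes\K(L^2(G)))\rtimes_\eps G$ (coming from Corollary \ref{min exact cor}) with the identifications provided by Morita compatibility of $\rtimes_\nu$ on one side and by \eqref{eq-muM} on the other. The only cosmetic difference is that you spell out the final norm comparison on elementary tensors $f\otimes p$, which the paper leaves implicit.
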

\begin{proof} All that remains to be checked is that $\rtimes_{\eps_M}$ is dominated by any other 
exact Morita compatible crossed-product functor $\rtimes_\mu$. For this consider the commutative diagram
$$
\begin{CD}
C_c(G,A)\otimes \K(L^2(G)) @>\Psi_{alg}^\lambda>\cong > (A\otimes \K(L^2(G)))\rtimes_{\alpha\otimes \Ad\lambda, \mu}G\\
@V\id VV    @VV q_\epsilon V\\
C_c(G,A) \otimes \K(L^2(G)) @>\Psi_{alg}^\lambda>\cong > (A\otimes \K(L^2(G)))\rtimes_{\alpha\otimes \Ad\lambda, \epsilon}G
\end{CD}
$$
where the right vertical arrow exists since $\rtimes_\mu$  dominates $\rtimes_\epsilon$. 
Since $\Psi_{alg}^\lambda$ extends to an  isomorphism
$$(A\rtimes_{\alpha,\mu}G)\otimes \K(L^2(G)) \cong (A\otimes \K(L^2(G)))\rtimes_{\alpha\otimes \Ad\lambda, \mu}G$$
in the top line and to an isomorphism 
$$(A\rtimes_{\alpha,\epsilon_M}G)\otimes \K(L^2(G)) \cong (A\otimes \K(L^2(G)))\rtimes_{\alpha\otimes \Ad\lambda, \epsilon}G$$
in the bottom line of the diagram, it follows that the left vertical arrow extends to a quotient map
$$q_{\epsilon_M}\otimes \id_{\K}:(A\rtimes_{\alpha,\mu}G)\otimes \K(L^2(G))  \to (A\rtimes_{\alpha,\epsilon_M}G)\otimes \K(L^2(G))$$
and the result follows.
\end{proof}

\noindent
{\bf Final discussion:} If we want to show that the group algebra of the smallest exact Morita compatible 
crossed-product functor $\rtimes_{\eps_M}$ coincides with the reduced group algebra $C_r^*(G)$, we 
 need to show that we have a canonical isomorphism
$$C_\epsilon^*(G)\otimes \K(L^2(G))\cong \K(L^2(G))\rtimes_{\Ad\lambda,\eps}G.$$
By the above results this would imply that $C_{\epsilon_M}^*(G)=C_{\epsilon}^*(G)$, which by 
Proposition \ref{group alg}  is equal to $C_r^*(G)$. 
 
 More generally, if for every $G$-algebra $(A,\alpha)$  one could show that $\Psi_{alg}^\lambda$ extends to a 
$*$-homomorphism
 $$\Psi_{\epsilon}^\lambda: (A\rtimes_{\alpha,\epsilon}G)\otimes\K(L^2(G))\to (A\otimes \K(L^2(G)))\rtimes_{\alpha\otimes\Ad\lambda, \epsilon}G$$
then $\rtimes_\epsilon=\rtimes_{\epsilon_M}$ would be Morita compatible.


\end{document}